\theoremstyle{plain}
 \newtheorem{theorem}{Theorem}[section]
 \newtheorem{lemma}{Lemma}[section]
 \newtheorem{corollary}{Corollary}[section]
 \newtheorem{definition}{Definition}[section]
\theoremstyle{remark}
 \newtheorem{notation}{Notation}[section]
 \numberwithin{equation}{section}
\renewcommand{\leq}{\leqslant}
\renewcommand{\geq}{\geqslant}
\title[]{Fuzzy Riesz subspaces, fuzzy ideals, fuzzy bands and fuzzy band projections}
\subjclass[2010]{Primary  06D72, 08A72; Secondary 46S40.  }
\keywords{Fuzzy Riesz spaces; fuzzy ideals; fuzzy bands; fuzzy band projections; fuzzy Archimedean spaces.}
\author[Hong]{\bfseries Liang Hong}
\address{
Department of Mathematics \\ % \hfill (Received 00 00 2010)\\
Robert Morris University   \\ %\hfill (Revised  00 00 2010)\\
Moon, PA 15108, USA}
\email{hong@rmu.edu}
\begin{document}

\vspace{18mm}
\setcounter{page}{1}
\thispagestyle{empty}

\begin{abstract}
Fuzzy ordered linear spaces, Riesz spaces, fuzzy Archimedean spaces and $\sigma$-complete fuzzy Riesz spaces were defined and studied in several works. Following the efforts along this line, we define fuzzy Riesz subspaces, fuzzy ideals, fuzzy bands and fuzzy band projections and establish their fundamental properties.
\end{abstract}

\maketitle

\section{Introduction}  %% Please avoid complicated formulas in titles
The theory of fuzzy mathematics was initiated in \cite{Zadeh1} and the notion of fuzzy order relation
was first defined in \cite{Zadeh2}. Later \cite{Venugopalan1} developed a systematic framework of fuzzy ordered sets paralleling that of classical partially ordered sets. This naturally led to the studies on fuzzy Riesz spaces in \cite{BI0}, fuzzy ordered linear spaces in \cite{BI1}, fuzzy Archimedean spaces in \cite{BI2} and $\sigma$-complete fuzzy Riesz spaces in \cite{Beg1}. \cite{Beg3} provides a good review of the key results in this direction. The purpose of this paper is to define and study fuzzy Riesz subspaces, fuzzy ideals fuzzy bands and fuzzy projection bands.

We fix some notations for our presentation. Unless otherwise stated, $N$ denotes the set of natural numbers; $R$ denotes the set of real numbers; $R^+$ denotes the set of nonnegative real numbers; Greek letters $\alpha, \beta, ...$ denote either indices or real numbers; the symbols $\leq$ and $>$ are used with respect to the usual order on $R$; all functions are assumed to be real-valued.

The remainder of the paper is organized as follows. Section 2 provide readers with some preliminaries; most material in this section can be found in the papers cited above; we give a few counterexamples to complement the existing literature; for a detailed treatment of fuzzy set theory, we refer to \cite{WRK} and \cite{Zimmermann}; for a comprehensive treatment of the classical theory of Riesz spaces, we refer to \cite{LZ}. Section 3 defines fuzzy ideals and studies their basic properties. Section 4 defines fuzzy bands and gives several important properties. Section 5 is devoted to the investigation of fuzzy projection bands.

\section{Preliminaries}

\subsection{Fuzzy ordered sets and fuzzy lattices}
\begin{definition}\emph{\ \cite{Zadeh1} }
\emph{Let $X$ be a space of points, with a generic element of $X$ denoted by $x$. A \emph{fuzzy set} $A$ on $X$ is a membership function $\mu_A: X\rightarrow [0, 1]$, with the value of $\mu_A(x)$ at $x$ representing the ``grade of membership'' of $x$ in $A$. The nearer the value $\mu_A(x)$ to unity, the higher the grade of membership of $x$ in $A$.}
\end{definition}
\noindent \textbf{Remark.} To distinguish a fuzzy set from an ordinary set, we call an ordinary set a \emph{crisp set}.

\begin{definition}\emph{\ \cite{Zadeh2} }
\emph{Let $X$ be a crisp set. A \emph{fuzzy order} on $X$ is a fuzzy subset of $X\times X$ whose membership function $\mu$ satisfies
\begin{enumerate}
  \item [(i)](reflexivity) $x\in X$ implies $\mu(x, x)=1$;
  \item [(ii)](antisymmetric) $x, y\in X$ and $\mu(x, y)+\mu(y, x)>1$ implies $x=y$;
  \item [(iii)](transitivity) $x, z\in X$ implies $\mu(x, z)\geq \vee_{y\in X}[\mu(x, y)\wedge \mu(y, z)]$, where $\vee$ and $\wedge$ denote supremum and infimum with respect to the usual order, respectively.
\end{enumerate}
A set with a fuzzy order defined on it is called a \emph{fuzzy ordered set} (or \emph{foset} for short.) }
\end{definition}

\begin{notation}\emph{\ \cite{Venugopalan1} }
\emph{Let $X$ be a foset and $x\in X$. $\uparrow x$ denotes the fuzzy set on $X$ defined by $(\uparrow x)(y)=\mu(x, y)$ for all $y\in X$. Likewise, $\downarrow x$ denotes the fuzzy set on $X$ defined by $(\downarrow x)(y)=\mu(y, x)$ for all $y\in X$. If $A$ is a crisp subset of $X$, $\uparrow A=\cup_{x\in A}(\uparrow x)$ and $\downarrow A=\cup_{x\in A}(\downarrow x)$.}
\end{notation}

\begin{definition}\emph{\ \cite{Venugopalan1} }
\emph{Let $A$ be a crisp subset of a foset $X$. The \emph{upper bound} $U(A)$ of $A$ is  the fuzzy set on $X$ defined as \begin{equation*}
U(A)(y)=\left\{
          \begin{array}{ll}
            0, & \hbox{if $(\uparrow x)(y)\leq 1/2$ for some $x\in A$;} \\
            \left(\cap_{x\in A} \uparrow x \right)(y), & \hbox{otherwise.}
          \end{array}
        \right.
\end{equation*}
Likewise, the \emph{lower bound} $L(A)$ of $A$ is the fuzzy set on $X$ defined as
\begin{equation*}
L(A)(y)=\left\{
          \begin{array}{ll}
            0, & \hbox{if $(\uparrow x)(y)\leq 1/2$ for some $x\in A$;} \\
            \left(\cap_{x\in A} \downarrow x \right)(y), & \hbox{otherwise.}
          \end{array}
        \right.
\end{equation*}
If $U(A)(x)>0$ for some $x\in X$, we write $x\in U(A)$; in this case we say $A$ is \emph{bounded above} and we call $x$ an \emph{upper bound} of $A$. Similarly, if $L(A)(x)>0$, we write $x\in L(A)$; in this case we say $A$ is \emph{bounded below} and we call $x$ a \emph{lower bound} of $A$. $A$ is said to be \emph{bounded} if it is both bounded above and bounded below. An element $z\in X$ is said to be a \emph{supremum} of $A$ if (i) $z\in U(A)$ and (ii) $y\in U(A)$ implies $y\in U(z)$. An element $z\in X$ is said to be a \emph{infimum} of $A$ if (i) $z\in L(A)$ and (ii) $y\in L(A)$ implies $y\in L(z)$. For a fuzzy subset $S$ of a foset $X$, $U(S)$ denotes $U(supp S)$, where $S=\{x\in X\mid \mu_S(x)>0\}$ is called the \emph{support} of $S$. Similarly, $L(S)$ denotes $L(supp S)$.}
\end{definition}

\begin{theorem}\emph{\ \cite{Venugopalan1} } \label{theorem2.1.1}
Let $A$ be a subset of a foset $X$. Then
\begin{enumerate}
  \item [(i)]$\inf A$, if it exists, is unique;
  \item [(ii)]$\sup A$, if it exists, is unique.
\end{enumerate}
\end{theorem}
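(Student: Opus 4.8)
The plan is to prove only part (ii), the uniqueness of the supremum; part (i) then follows by the order-reversing symmetry between $U$ and $L$ (equivalently, by interchanging the roles of $\mu(x,y)$ and $\mu(y,x)$, and of $\uparrow$ and $\downarrow$, throughout). The whole argument rests on first unwinding what the defining condition of a supremum actually asserts about the fuzzy order $\mu$, so that the antisymmetry axiom can be brought to bear.

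First I would compute $U(\{z\})$ for a singleton. By the definition of the upper bound, for $A=\{z\}$ the alternative ``$(\uparrow x)(y)\leq 1/2$ for some $x\in A$'' reduces to ``$\mu(z,y)\leq 1/2$'', in which case $U(\{z\})(y)=0$; otherwise $U(\{z\})(y)=\left(\uparrow z\right)(y)=\mu(z,y)$. Hence $y\in U(z)$, which by convention means $U(\{z\})(y)>0$, is equivalent to the single scalar inequality $\mu(z,y)>1/2$. This translation is the crux of the proof: it converts the fuzzy-set membership ``$y\in U(z)$'' into a strict numerical inequality to which axiom (ii) (antisymmetry) applies directly.

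With this in hand, suppose $z_1$ and $z_2$ are both suprema of $A$. Each lies in $U(A)$ by clause (i) of the definition of supremum. Clause (ii) says that every element of $U(A)$ lies in $U$ of a given supremum, so I would apply it in both directions: from $z_2\in U(A)$ and the supremum property of $z_1$ I obtain $z_2\in U(z_1)$, i.e. $\mu(z_1,z_2)>1/2$; symmetrically, from $z_1\in U(A)$ and the supremum property of $z_2$ I obtain $z_1\in U(z_2)$, i.e. $\mu(z_2,z_1)>1/2$. Adding the two inequalities gives $\mu(z_1,z_2)+\mu(z_2,z_1)>1$, and antisymmetry forces $z_1=z_2$.

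I expect the only genuinely delicate point to be the bookkeeping around the threshold $1/2$ in the definition of $U$: one must verify carefully that ``$y\in U(z)$'' collapses to the strict inequality $\mu(z,y)>1/2$ and not to something weaker, since it is precisely two \emph{strict} inequalities summing to a quantity exceeding $1$ that activates antisymmetry. Once that reduction is secured, the remainder is a two-line application of the axioms, and the infimum case in part (i) is the verbatim dual, with $U$, $\uparrow$ replaced by $L$, $\downarrow$ and $\mu(z,y)$ replaced by $\mu(y,z)$.
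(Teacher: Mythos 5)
Your proof is correct: the reduction of $y\in U(z)$ to the strict inequality $\mu(z,y)>1/2$, the double application of the two supremum clauses (giving $z_2\in U(z_1)$ and $z_1\in U(z_2)$), and the appeal to antisymmetry via $\mu(z_1,z_2)+\mu(z_2,z_1)>1$ are exactly what the definitions support, and the dual argument for infima goes through under either reading of the paper's definition of $L(A)$ (whose ``$(\uparrow x)(y)\leq 1/2$'' clause is evidently a typo for ``$(\downarrow x)(y)\leq 1/2$''; with the typo read literally you even get both strict inequalities directly). Note that the paper itself states this theorem as a preliminary cited from Venugopalan and gives no proof of its own, so there is no in-paper argument to compare against; yours is the standard one.
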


\begin{notation}\emph{\ \cite{Venugopalan1} } \label{theorem2.1.2}
$x\vee y=\sup\{x, y\}$ and $x\wedge y=\inf\{x, y\}$.
\end{notation}

\begin{theorem}\emph{\ \cite{Venugopalan1} } \label{theorem2.1.3}
Let $X$ be a foset. Then the following identities hold, whenever the expressions referred to exist.
\begin{enumerate}
  \item [(i)](idempotent) $x\wedge x=x$ and $x\vee x=x$.
  \item [(ii)](commutative) $x\wedge y=y\wedge x$ and $x\wedge y=y\wedge x$.
  \item [(iii)](absorption) $x\wedge (x\vee y)=x\vee(x\wedge y)=x$.
  \item [(iv)]$\mu(x, y)>1/2$ if and only if $x\wedge y=x$ if and only if $x\vee y=y$.
\end{enumerate}
\end{theorem}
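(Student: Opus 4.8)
The plan is to prove part (iv) first, since it is the bridge between the fuzzy order $\mu$ and the lattice operations, and then to read off (i)--(iii) as consequences. The key preliminary observation, which I would record at the outset, is a translation of the definitions of $U$ and $L$ for the finite set $\{x,y\}$: an element $z$ is an upper bound of $\{x,y\}$ precisely when $\mu(x,z)>1/2$ and $\mu(y,z)>1/2$, and dually $z$ is a lower bound precisely when $\mu(z,x)>1/2$ and $\mu(z,y)>1/2$. Here the positivity clause required for membership in $U(A)$ or $L(A)$ is automatic for a finite set once each relevant value of $\mu$ exceeds $1/2$, so the $1/2$-threshold conditions are the only content. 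Unwinding the definitions of supremum and infimum the same way, $z=\sup\{x,y\}$ means $z$ is an upper bound and every upper bound $w$ satisfies $\mu(z,w)>1/2$, and $z=\inf\{x,y\}$ means $z$ is a lower bound and every lower bound $w$ satisfies $\mu(w,z)>1/2$.

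For part (iv), suppose $\mu(x,y)>1/2$. Reflexivity gives $\mu(x,x)=1>1/2$, so $x$ is a lower bound of $\{x,y\}$; and any lower bound $w$ satisfies $\mu(w,x)>1/2$ by definition, so $x$ is the greatest lower bound and, by the uniqueness in Theorem \ref{theorem2.1.1}(i), $x\wedge y=x$. The same hypothesis makes $y$ an upper bound of $\{x,y\}$ and forces every upper bound $w$ to satisfy $\mu(y,w)>1/2$, whence $x\vee y=y$. The two converses are immediate: if $x=x\wedge y$ then $x$ is a lower bound of $\{x,y\}$, so $\mu(x,y)>1/2$; and if $y=x\vee y$ then $y$ is an upper bound of $\{x,y\}$, so again $\mu(x,y)>1/2$.

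Parts (i) and (ii) then cost almost nothing. Idempotency follows by setting $y=x$ in (iv) and using $\mu(x,x)=1>1/2$. Commutativity is immediate from the set equality $\{x,y\}=\{y,x\}$, which makes the defining conditions for $\sup$ and $\inf$ symmetric in the two arguments. For the absorption laws in (iii), I would put $z=x\vee y$ and note that $z\in U(\{x,y\})$ yields $\mu(x,z)>1/2$; part (iv) then gives $x\wedge z=x$, that is, $x\wedge(x\vee y)=x$. Dually, with $w=x\wedge y$ the lower-bound property gives $\mu(w,x)>1/2$, so (iv) yields $w\vee x=x$, and commutativity rewrites this as $x\vee(x\wedge y)=x$.

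I do not anticipate a serious obstacle. The only real care is in the first step, namely checking that the $1/2$-threshold clauses in the definitions of $U(A)$ and $L(A)$ collapse, on the finite set $\{x,y\}$, to the clean conditions $\mu(\cdot,\cdot)>1/2$, and in remembering that the symbols $x\wedge y$ and $x\vee y$ are meaningful as equations only because Theorem \ref{theorem2.1.1} guarantees that the infimum and supremum, when they exist, are unique. Once (iv) is in hand, every remaining identity reduces to a one-line application of it together with reflexivity.
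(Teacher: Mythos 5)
Your proof is correct. There is no in-paper argument to compare it against: the paper states this theorem as a preliminary quoted from \cite{Venugopalan1} and gives no proof of it. Your structure---first translating membership in $U(\{x,y\})$ and $L(\{x,y\})$ into the threshold conditions $\mu(\cdot,\cdot)>1/2$, then proving (iv) directly from the definitions of supremum and infimum together with the uniqueness from Theorem \ref{theorem2.1.1}, and finally reading off (i)--(iii) as one-line consequences---is sound and complete relative to the definitions given in Section 2.
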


\begin{definition}\emph{\ \cite{Venugopalan1} }
\emph{A foset $X$ is called a \emph{fuzzy lattice} (or \emph{F-lattice} for short) if all finite subsets of $X$ have suprema and infima. A fuzzy lattice is said to be \emph{complete} if every subset of $X$ has a supremum and an infimum.}
\end{definition}

\subsection{Fuzzy Riesz spaces}
\begin{definition}\emph{\ \cite{BI1} }\label{definition2.3.1}
\emph{A real vector space $X$ is said to be a \emph{fuzzy ordered vector space} if $X$ is a foset and the fuzzy order on $X$ is compatible with the vector structure of $X$ in the sense that it satisfies the following two properties:
\begin{enumerate}
  \item [(i)]if $x, y\in X$ satisfies $\mu(x, y)>1/2$, then $\mu(x, y)\leq \mu(x+z, y+z)$ for all $z\in X$;
  \item [(ii)]if $x, y\in X$ satisfies $\mu(x, y)>1/2$, then $\mu(x, y)\leq \mu(\lambda x, \lambda y)$ for all $\lambda\in R^+$.
\end{enumerate}}
\end{definition}
\noindent \textbf{Remark.} It follows from the transitivity of $\mu$ and condition (i) that if $\mu(x_1, x_2)>1/2$ and $\mu(x_3, x_4)>1/2$, then $\mu(x_1+x_3, x_2+x_4)>1/2$.

\begin{definition}\ \emph{\cite{BI1}} \label{definition2.3.2}
\emph{Let $X$ be a fuzzy ordered vector space and $x\in X$. $x$ is said to be \emph{positive} if $\mu(0, x)>1/2$; $x$ is said to be \emph{negative} if $\mu(x, 0)>1/2$; $x$ is said to be \emph{nonnegative} if $x$ is not negative.}
\end{definition}

\begin{definition}\ \emph{\cite{BI2}}
\emph{Let $D$ be a subset of foset $X$.
\begin{enumerate}
  \item [(i)]$D$ is said to be \emph{directed to the right} if for every finite subset $E$ of $D$, $D\cap U(E)\neq \phi$.
  \item [(ii)]$D$ is said to be \emph{directed to the left} if for every finite subset $E$ of $D$, $D\cap L(E)\neq \phi$.
  \item [(iii)]$D$ is said to be \emph{directed} if it is both directed to the right and directed to the left.
\end{enumerate}
\noindent A \emph{directed fuzzy ordered vector space} is a fuzzy vector space which is directed. }
\end{definition}

\begin{theorem}\ \emph{\cite{BI1}} \label{theorem2.3.1}
Let $X$ be a fuzzy ordered vector space, $x, y, z\in X$ and $\alpha, \beta\in R$. Then the following statements hold.
\begin{enumerate}
  \item [(i)]If $\mu(0, x)>1/2$ and $\mu(0, y)>1/2$, then $\mu(0, x+y)>1/2$.
  \item [(ii)]If $\mu(0, x)>1/2$ and $\mu(0, -x)>1/2$, then $x=0$.
  \item [(iii)]If $\mu(0, x)>1/2$ and $\alpha\geq 0$, then $\mu(0, \alpha x)>1/2$.
  \item [(iv)]If $\mu(x_1, x_2)>1/2$ and $\alpha\leq 0$, then $\mu(\alpha x_2, \alpha x_1)>1/2$.
  \item [(v)]If $\mu(0, x)>1/2$ and $\alpha\leq \beta$, then $\mu(\alpha x, \beta x)>1/2$.
\end{enumerate}
\end{theorem}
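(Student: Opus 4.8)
The plan is to derive all five statements directly from the two compatibility axioms of Definition \ref{definition2.3.1}, together with the transitivity and antisymmetry of the fuzzy order and the additivity remark stated immediately after Definition \ref{definition2.3.1}. The recurring technique is that the additive axiom (i) lets me translate any $\mu$-inequality by an arbitrary vector without decreasing its value, while the scaling axiom (ii) lets me multiply by a nonnegative scalar; the whole argument is an exercise in combining these two moves in the right order, while keeping track of which inequalities already exceed $1/2$ so that the axioms actually apply.

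For part (i), I would simply invoke the remark after Definition \ref{definition2.3.1} with $x_1=x_3=0$, $x_2=x$ and $x_4=y$, which yields $\mu(0,x+y)>1/2$ at once. For part (iii), the scaling axiom (ii) gives $\mu(0,x)\le\mu(\alpha\cdot 0,\alpha x)=\mu(0,\alpha x)$ for $\alpha\in R^+$, so $\mu(0,\alpha x)>1/2$ follows immediately. Part (ii) is where antisymmetry enters: starting from $\mu(0,-x)>1/2$, I translate by $x$ using axiom (i) to obtain $\mu(x,0)\ge\mu(0,-x)>1/2$; combined with the hypothesis $\mu(0,x)>1/2$ this gives $\mu(0,x)+\mu(x,0)>1$, and antisymmetry forces $x=0$.

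The remaining two parts chain the two techniques. For part (iv), since $\alpha\le 0$ I set $\lambda=-\alpha\ge 0$ and apply the scaling axiom to get $\mu(-\alpha x_1,-\alpha x_2)>1/2$; I then translate by $\alpha x_1+\alpha x_2$ via axiom (i), which carries this into $\mu(\alpha x_2,\alpha x_1)>1/2$. For part (v), I first apply part (iii) to the nonnegative scalar $\beta-\alpha$ to obtain $\mu(0,(\beta-\alpha)x)>1/2$, and then translate by $\alpha x$ to reach $\mu(\alpha x,\beta x)>1/2$.

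I expect no serious obstacle, since each statement reduces to a short manipulation; the only point demanding genuine care is the negative-scalar case (iv), because the scaling axiom is available only for nonnegative multipliers, so I must factor out the sign as $-\alpha$ and then repair the resulting expression with a compensating translation rather than scaling directly by $\alpha$. Throughout, I must also verify that every inequality to which I apply axiom (i) or (ii) already strictly exceeds $1/2$, since that strictness is the hypothesis under which the axioms guarantee the membership value does not decrease.
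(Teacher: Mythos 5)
Your proof is correct in all five parts: (i) follows from the additivity remark after Definition \ref{definition2.3.1}, (ii) from a translation by $x$ plus antisymmetry, (iii) from the scaling axiom, and (iv)--(v) from the key maneuver of scaling by the nonnegative factor ($-\alpha$, resp.\ $\beta-\alpha$) followed by a compensating translation, which correctly circumvents the restriction of axiom (ii) to nonnegative multipliers. Note that the paper states Theorem \ref{theorem2.3.1} without proof --- it is quoted as a preliminary from \cite{BI1} --- so there is no in-paper argument to compare against; your self-contained derivation from the axioms of Definition \ref{definition2.3.1} is the natural one and would stand as a complete proof.
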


\begin{theorem}\ \emph{\cite{BI1}} \label{theorem2.3.2}
Let $\{x_j\}_{j\in J}$ be a family of elements in a fuzzy ordered vector space.
\begin{enumerate}
  \item [(i)]If $\lambda\geq 0$, then $\vee_{j\in J} (\lambda x_i)$ exists, and
                \begin{equation*}
                \vee_{j\in J} (\lambda x_i) = \lambda \left(\vee_{j\in J}x_j\right).
                \end{equation*}
  \item [(ii)]If $\lambda<0$, then $\wedge_{j\in J} (\lambda x_i)$ exists, and
                \begin{equation*}
                \wedge_{j\in J} (\lambda x_i) = \lambda \left(\vee_{j\in J}x_j\right).
                \end{equation*}
\end{enumerate}
\end{theorem}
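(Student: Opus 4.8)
The plan is to prove both parts at once by fixing $x=\vee_{j\in J}x_j$ (which we assume exists, since the right-hand side refers to it) and checking that $\lambda x$ satisfies the two defining clauses of a supremum in part (i) and of an infimum in part (ii); uniqueness is then automatic from Theorem \ref{theorem2.1.1}. The first thing I would record is a translation of the abstract definitions of upper and lower bound into pointwise statements about $\mu$: from the formula defining $U(A)$, an element $z$ lies in $U(A)$ exactly when $(\uparrow a)(z)=\mu(a,z)>1/2$ for every $a\in A$, and dually $z\in L(A)$ exactly when $\mu(z,a)>1/2$ for every $a\in A$. With this dictionary, the statement $x=\sup_j x_j$ unpacks into the pair of conditions (a) $\mu(x_j,x)>1/2$ for all $j$, and (b) whenever $\mu(x_j,y)>1/2$ for all $j$, one has $\mu(x,y)>1/2$.

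For part (i) the case $\lambda=0$ is immediate, since the family collapses to $\{0\}$ and $\lambda x=0$. So assume $\lambda>0$. To see that $\lambda x$ is an upper bound of $\{\lambda x_j\}$, I would apply Definition \ref{definition2.3.1}(ii) with the scalar $\lambda\in R^+$ to each inequality $\mu(x_j,x)>1/2$, obtaining $\mu(\lambda x_j,\lambda x)\geq\mu(x_j,x)>1/2$. For minimality, given any upper bound $y$ of $\{\lambda x_j\}$, I would scale by $\tfrac1\lambda\in R^+$ using the same axiom to get $\mu(x_j,\tfrac1\lambda y)>1/2$ for all $j$; condition (b) then yields $\mu(x,\tfrac1\lambda y)>1/2$, and one final application of Definition \ref{definition2.3.1}(ii) with the scalar $\lambda$ gives $\mu(\lambda x,y)>1/2$. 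Hence $\lambda x$ meets both clauses and equals $\vee_{j\in J}(\lambda x_j)$.

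Part (ii) runs along identical lines, but with the order-reversal supplied by Theorem \ref{theorem2.3.1}(iv) in place of the monotone scaling axiom. Assuming $\lambda<0$, applying part (iv) with $\alpha=\lambda$ to $\mu(x_j,x)>1/2$ gives $\mu(\lambda x,\lambda x_j)>1/2$, so $\lambda x\in L(\{\lambda x_j\})$. For a given lower bound $y$ of $\{\lambda x_j\}$, applying part (iv) with $\alpha=\tfrac1\lambda<0$ turns $\mu(y,\lambda x_j)>1/2$ into $\mu(x_j,\tfrac1\lambda y)>1/2$, so $\tfrac1\lambda y$ is an upper bound of $\{x_j\}$; condition (b) gives $\mu(x,\tfrac1\lambda y)>1/2$, and a last use of part (iv) with $\alpha=\lambda$ returns $\mu(y,\lambda x)>1/2$. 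Thus $\lambda x=\wedge_{j\in J}(\lambda x_j)$.

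I expect the only real friction to be bookkeeping: keeping straight which variable plays the role of the bound, and invoking the scaling axiom (or its order-reversing analogue from Theorem \ref{theorem2.3.1}(iv)) twice with reciprocal scalars so that the factors $\lambda$ and $\tfrac1\lambda$ cancel. Since each compatibility statement carries a hypothesis of the form $\mu(\cdot,\cdot)>1/2$, at every step one must first confirm that the relevant $\mu$-value exceeds $1/2$ before the axiom may be applied; a careless reversal of direction would break the chain, so I would isolate the pointwise dictionary for $U(A)$ and $L(A)$ as an explicit preliminary and cite it at each transition.
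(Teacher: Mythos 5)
Your proof is correct: the pointwise dictionary for $U(A)$ and $L(A)$ matches the paper's definitions, the case $\lambda=0$ is handled, and the two applications of Definition \ref{definition2.3.1}(ii) (resp.\ Theorem \ref{theorem2.3.1}(iv)) with the reciprocal scalars $\lambda$ and $\tfrac{1}{\lambda}$ are exactly what is needed to verify both clauses of the supremum (resp.\ infimum), with the necessary implicit hypothesis that $\vee_{j\in J}x_j$ exists correctly made explicit. Note that the paper itself gives no proof of this statement --- it is quoted as a preliminary from \cite{BI1} --- so there is no in-paper argument to compare against; yours is the standard direct verification one would expect.
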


\begin{theorem}\ \emph{\cite{BI1}} \label{theorem2.3.2*}
Let $\{x_j\}_{j\in J}$ and $\{y_l\}_{l\in L}$ be two families of elements in a fuzzy ordered vector space.
If $\vee_{j\in J} x_j $ and $\vee_{l\in L} y_l$ exist, then
\begin{equation*}
\vee_{j\in J, l\in L}(x_j+y_l)=\vee_{j\in J} x_j+\vee_{l\in L} y_l.
\end{equation*}
\end{theorem}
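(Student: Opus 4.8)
The plan is to recognize that the fuzzy supremum of a set $A$ coincides with the least upper bound for the crisp relation $a \preceq b \iff \mu(a,b) > 1/2$, and then to run the classical ordered-vector-space argument inside this relation. The first thing I would record is the working description of upper bounds hidden in the definition of $U(A)$: unwinding the formula for $U(A)(w)$, one sees that $w \in U(A)$ (that is, $U(A)(w) > 0$) if and only if $\mu(a, w) > 1/2$ for every $a \in A$, since the infimum of numbers each exceeding $1/2$ is itself at least $1/2 > 0$. With this in hand, writing $x := \vee_{j\in J} x_j$ and $y := \vee_{l\in L} y_l$, the goal reduces to verifying that $x+y$ satisfies the two defining conditions of $\sup\{x_j + y_l\}$; uniqueness from Theorem~\ref{theorem2.1.1} then delivers the stated equality and, simultaneously, the existence of the joint supremum.

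For the upper-bound condition I would argue as follows. Since $x \in U(\{x_j\})$ and $y \in U(\{y_l\})$, the characterization above gives $\mu(x_j, x) > 1/2$ and $\mu(y_l, y) > 1/2$ for all $j, l$. The Remark following Definition~\ref{definition2.3.1} then yields $\mu(x_j + y_l, x + y) > 1/2$ for every pair $(j,l)$, which is exactly $x + y \in U(\{x_j + y_l\})$.

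The substantive half is showing $x+y$ is the \emph{least} such bound. Let $w \in U(\{x_j + y_l\})$, so $\mu(x_j + y_l, w) > 1/2$ for all $j, l$. The key device is the translation estimate of Definition~\ref{definition2.3.1}(i), which upgrades $\mu(a,b) > 1/2$ to $\mu(a+z, b+z) > 1/2$ for every $z \in X$. Fixing $l$ and translating by $-y_l$ turns $\mu(x_j + y_l, w) > 1/2$ into $\mu(x_j, w - y_l) > 1/2$ for all $j$, so $w - y_l \in U(\{x_j\})$; condition (ii) in the definition of supremum, applied to $x = \vee_{j\in J} x_j$, then yields $\mu(x, w - y_l) > 1/2$. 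Translating this first by $y_l$ and then by $-x$ gives $\mu(y_l, w - x) > 1/2$, valid for every $l$, so that $w - x \in U(\{y_l\})$; the same supremum property, now for $y = \vee_{l\in L} y_l$, gives $\mu(y, w - x) > 1/2$. A final translation by $x$ produces $\mu(x + y, w) > 1/2$, that is $w \in U(x+y)$, which is precisely condition (ii) of the supremum for the family $\{x_j + y_l\}$.

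The main obstacle is not conceptual but bookkeeping: every inequality must be read through the $>1/2$ threshold, and one must stay disciplined about which translations of Definition~\ref{definition2.3.1}(i) are being invoked (note that translation invariance holds for an \emph{arbitrary} $z$, positive or negative, because the hypothesis is only the strict inequality $\mu(a,b)>1/2$ rather than a pinned value). The single step I would pause to double-check is the passage from ``$w - y_l$ is an upper bound of $\{x_j\}$'' to ``$\mu(x, w - y_l)>1/2$'', since this is exactly where condition (ii) in the definition of supremum is consumed, and it must be applied to the correct crisp set.
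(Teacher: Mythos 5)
Your proof is correct. Note that the paper does not actually prove this statement---it is quoted as a preliminary result from \cite{BI1}---so there is no internal proof to compare against; your argument (reducing $w\in U(A)$ to the crisp condition $\mu(a,w)>1/2$ for all $a\in A$, verifying the two clauses of the definition of supremum for $x+y$, and exploiting that the translation estimate in Definition~\ref{definition2.3.1}(i) holds for \emph{arbitrary} $z$, including $z=-y_l$ and $z=-x$) is the standard one and correctly supplies the missing proof, including the existence of $\vee_{j,l}(x_j+y_l)$ via the uniqueness in Theorem~\ref{theorem2.1.1}.
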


\begin{definition}\ \emph{\cite{BI0}} \label{definition2.3.3}
\emph{A fuzzy ordered vector space is called a \emph{fuzzy Riesz space} if it is also a fuzzy lattice at the same time.}
\end{definition}

\cite{BI0} and \cite{BI1} gave several examples of fuzzy ordered linear spaces and fuzzy Riesz spaces. Below we give an example to show that a fuzzy ordered linear space need not be a fuzzy Riesz space.\\

\noindent \textbf{Example 2.1.} Let $X=D(R)$ be the set of all differential functions on $R$ with coordinate algebraic operations. Define a membership function $\mu: X\times X\rightarrow [0, 1]$ by
\begin{equation*}
\mu(f, g)=\left\{
            \begin{array}{ll}
              1, & \hbox{if $f\equiv g$;} \\
             2/3, & \hbox{if $f(t)\leq g(t)$ for all $t\in R$ and $f\not \equiv g$;} \\
              0, & \hbox{otherwise.}
            \end{array}
          \right.
\end{equation*}
It is routine to verify that $X$ equipped with $\mu$ is a fuzzy ordered linear space. However, $X$ fails to be a fuzzy Riesz space. To see this, take $f(t)=t$ and $g(t)=-t$ in $X$. Put $k(t)=|t|$. Then $\mu(f, k)>1/2$ and $\mu(g, k)>1/2$, that is, $k\in U(\{f, g\})$. If $h\in U(\{f, g\})$, then $\mu(f, h)>1/2$ and $\mu(g, h)>1/2$; hence $f(t)=t\leq h(t)$ and $g(t)=-t\leq h(t)$ for all $t\in [0,1]$, implying $h(t)\geq |t|$ for all $t\in [0, 1]$, that is, $h\in U(k)$. This shows that $f\vee g=k$. But $(f\vee g)(t)=|t|$  is not differentiable at $t=0$. Thus, $f\vee g\not \in X$, proving that $X$ is not a fuzzy Riesz space.\\

\begin{definition}\ \emph{\cite{BI0}} \label{definition2.3.4}
\emph{Let $X$ be a fuzzy Riesz space and $x\in X$. The \emph{positive part} of $x$ is defined by $x^+=x\wedge 0$; the \emph{negative part} of $x^-=(-x)\vee 0$; the \emph{absolute value} of $x$ is defined by $|x|=x\vee (-x)$.}
\end{definition}

\begin{theorem}\ \emph{\cite{BI0}} \label{theorem2.3.3}
Let $X$ be a fuzzy Riesz space and $x, y\in X$. Then the absolute value has the following properties:
\begin{enumerate}
  \item [(i)]$\mu(|x+y|, |x|+|y|)>1/2$;
  \item [(ii)]$|\lambda x|=|\lambda||x|$ for all $\lambda\in R$;
  \item [(iii)]$\mu(|x|-|y|, |x-y|)>1/2$;
  \item [(iv)]$|x-y|=(x\vee y)-(x\wedge y)$.
\end{enumerate}
\end{theorem}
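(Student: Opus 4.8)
The plan is to reduce all four identities to the translation- and scalar-compatibility of the lattice operations, which are already packaged in Theorems \ref{theorem2.3.2} and \ref{theorem2.3.2*}. The single fact I would establish first, and then reuse throughout, is that $|x|=x\vee(-x)$ is by definition the supremum of $\{x,-x\}$, so $|x|$ lies in $U(\{x,-x\})$; unwinding the definition of $U$ this says exactly that $\mu(x,|x|)>1/2$ and $\mu(-x,|x|)>1/2$. In words, $x\leq|x|$ and $-x\leq|x|$ in the fuzzy order. I would then use the characterization $\mu(a,b)>1/2 \iff a\vee b=b$ from Theorem \ref{theorem2.1.3}(iv) freely to pass between inequalities and lattice identities.

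For (i) I would mimic the classical triangle inequality. From $\mu(x,|x|)>1/2$ and $\mu(y,|y|)>1/2$, the Remark following Definition \ref{definition2.3.1} (additivity of the relation $\mu>1/2$) yields $\mu(x+y,|x|+|y|)>1/2$; applying the same to $-x$ and $-y$ gives $\mu(-(x+y),|x|+|y|)>1/2$. Hence $|x|+|y|$ is an upper bound of $\{x+y,-(x+y)\}$, and since $|x+y|$ is by definition the \emph{least} upper bound, the defining property of the supremum forces $|x|+|y|\in U(|x+y|)$, i.e. $\mu(|x+y|,|x|+|y|)>1/2$. Statement (iii) is then immediate: apply (i) to the pair $x-y$ and $y$ to get $\mu(|x|,|x-y|+|y|)>1/2$, and translate by $-|y|$ using the order-compatibility condition Definition \ref{definition2.3.1}(i), which preserves $\mu>1/2$, to obtain $\mu(|x|-|y|,|x-y|)>1/2$.

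For (ii) I would split on the sign of $\lambda$ and invoke Theorem \ref{theorem2.3.2}. If $\lambda\geq 0$, part (i) of that theorem gives $(\lambda x)\vee(\lambda(-x))=\lambda(x\vee(-x))$, and since $-\lambda x=\lambda(-x)$ this is precisely $|\lambda x|=\lambda|x|=|\lambda||x|$. The case $\lambda<0$ is handled by writing $\lambda=-|\lambda|$ and noting $\{\lambda x,-\lambda x\}=\{|\lambda|(-x),|\lambda|x\}$, so the nonnegative case applies with multiplier $|\lambda|\geq 0$; the case $\lambda=0$ is trivial.

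Finally (iv) is where the genuine bookkeeping lives, and I expect it to be the main obstacle. Taking the second family in Theorem \ref{theorem2.3.2*} to be a singleton yields translation invariance $(x+z)\vee(y+z)=(x\vee y)+z$, and Theorem \ref{theorem2.3.2}(ii) with $\lambda=-1$ gives the duality $x\wedge y=-\big((-x)\vee(-y)\big)$. Translating $x\vee y$ by $-x-y$ then produces $x+y=(x\vee y)+(x\wedge y)$, so that $(x\vee y)-(x\wedge y)=2(x\vee y)-(x+y)$. On the other hand, writing $|x-y|=(x-y)\vee(y-x)$ and translating by $x+y$, together with Theorem \ref{theorem2.3.2}(i) for $\lambda=2$, gives $|x-y|+(x+y)=(2x)\vee(2y)=2(x\vee y)$, whence $|x-y|=2(x\vee y)-(x+y)$. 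Comparing the two expressions yields (iv). The care needed here is to justify every translation strictly through the existence-and-value statement of Theorem \ref{theorem2.3.2*} rather than silently assuming associativity or cancellation of the lattice operations, since those have not been established independently.
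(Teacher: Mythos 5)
Your proof is correct, but there is nothing in this paper to compare it against: Theorem \ref{theorem2.3.3} appears in the Preliminaries as a result quoted from \cite{BI0}, and the paper gives no proof of it. Judged on its own terms, against the toolkit the paper does supply, every step of your argument is sound. The preliminary observation that $\mu(x,|x|)>1/2$ and $\mu(-x,|x|)>1/2$ follows correctly from the definition of $U$ and of the supremum; in (i) you correctly combine the additivity remark after Definition \ref{definition2.3.1} with clause (ii) in the definition of supremum to conclude $|x|+|y|\in U(|x+y|)$; (iii) is the standard translation of (i), and clause (i) of Definition \ref{definition2.3.1} does preserve the strict inequality as you claim; (ii) splits correctly on the sign of $\lambda$ via Theorem \ref{theorem2.3.2}(i), the negative case reducing to the nonnegative one applied to the pair $\{-x,x\}$; and in (iv) each translation is legitimately licensed by Theorem \ref{theorem2.3.2*} with a singleton second family, which is exactly the care the fuzzy setting demands. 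One economy you missed: the identity $x+y=(x\vee y)+(x\wedge y)$ that you rederive inside (iv) is precisely Theorem \ref{theorem2.3*} of the paper, so you could have cited it outright and reduced the bookkeeping in (iv) to the single computation $|x-y|+(x+y)=(2x)\vee(2y)=2(x\vee y)$ followed by a comparison of the two expressions for $2(x\vee y)-(x+y)$.
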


\begin{theorem}\ \emph{\cite{BI0}}  \label{theorem2.3.4}
Let $X$ be a fuzzy Riesz space, $x, y\in X$ and $\lambda>0$. Then the following equalities and inequalities hold.
\begin{enumerate}
  \item [(i)]$\mu((x+y)^+, x^+ + y^+)>1/2$;
  \item [(ii)]$\mu((x+y)^-, x^- + y^-)>1/2$;
  \item [(iii)]$(\lambda x)^+=\lambda x^+$;
  \item [(iv)] $(\lambda x)^-=\lambda x^-$.
\end{enumerate}
\end{theorem}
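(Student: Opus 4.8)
The plan is to reduce all four parts to facts already in hand: the positive-homogeneity of suprema (Theorem~\ref{theorem2.3.2}) and the order-compatibility of the vector operations (Theorem~\ref{theorem2.3.1} together with the remark following Definition~\ref{definition2.3.1}). I will use throughout that, for a finite set $A$, an element $w$ lies in $U(A)$ exactly when $\mu(a,w)>1/2$ for every $a\in A$, and that if $z=\sup A$ then $\mu(z,w)>1/2$ for every upper bound $w$ of $A$. Recall $x^+=x\vee 0$ and $x^-=(-x)\vee 0$; applying the first observation to $A=\{x,0\}$ then yields the two inequalities $\mu(x,x^+)>1/2$ and $\mu(0,x^+)>1/2$ that drive the whole argument.

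I would dispose of the homogeneity parts (iii) and (iv) first, as they are immediate. Since $0=\lambda\cdot 0$, applying Theorem~\ref{theorem2.3.2}(i) to the two-element family $\{x,0\}$ with scalar $\lambda>0$ gives $(\lambda x)\vee(\lambda\cdot 0)=\lambda(x\vee 0)$, that is, $(\lambda x)^+=\lambda x^+$, which is (iii). Part (iv) is the same computation with $x$ replaced by $-x$: because $x^-=(-x)^+$, homogeneity yields $(\lambda x)^-=(-\lambda x)^+=\lambda(-x)^+=\lambda x^-$.

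For (i) the strategy is to show that $x^++y^+$ is an upper bound of $\{x+y,\,0\}$ and then invoke the supremum property of $(x+y)^+$. From $\mu(x,x^+)>1/2$ and $\mu(y,y^+)>1/2$, the remark following Definition~\ref{definition2.3.1} gives $\mu(x+y,\,x^++y^+)>1/2$; from $\mu(0,x^+)>1/2$ and $\mu(0,y^+)>1/2$, Theorem~\ref{theorem2.3.1}(i) gives $\mu(0,\,x^++y^+)>1/2$. Hence $x^++y^+$ is an upper bound of the finite set $\{x+y,0\}$. Since $(x+y)^+=\sup\{x+y,0\}$, the defining property of the supremum forces $\mu\bigl((x+y)^+,\,x^++y^+\bigr)>1/2$, which is (i). Finally (ii) follows by applying (i) to $-x$ and $-y$ and simplifying with $(x+y)^-=(-(x+y))^+$ and $(-x)^+=x^-$.

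The one place that needs care—and the only real obstacle—is the constant passage between the lattice language (supremum, infimum, upper bound) and the membership values $\mu$. Every step above hinges on correctly translating ``$w\in U(A)$ for a finite $A$'' into the family of strict inequalities $\mu(a,w)>1/2$, and on reading off $\mu(z,w)>1/2$ from ``$z=\sup A$ and $w$ an upper bound of $A$''. Once these two translations are recorded at the outset, each part is a short chain of cited results; the only substantive point is that the strict bound $>1/2$ survives addition, which is guaranteed precisely by the remark after Definition~\ref{definition2.3.1} and by Theorem~\ref{theorem2.3.1}(i).
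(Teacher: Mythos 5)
Your proof is correct and complete. One structural point first: this theorem belongs to the paper's preliminaries and is quoted from \cite{BI0} without proof, so there is no in-paper argument to compare yours against; what you have produced is a self-contained reconstruction from the stated definitions, Theorem \ref{theorem2.3.1}, Theorem \ref{theorem2.3.2}, and the remark following Definition \ref{definition2.3.1}. Your two translation lemmas are exactly the right pivots, and both are valid under the paper's definitions: for a finite set $A$, $U(A)(w)>0$ holds precisely when $\mu(a,w)>1/2$ for every $a\in A$ (the ``otherwise'' clause then returns a minimum exceeding $1/2$), and if $z=\sup A$ then every $w\in U(A)$ lies in $U(\{z\})$, which by the same clause forces $\mu(z,w)>1/2$. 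With these in place, your reduction of (i) to ``$x^++y^+$ is an upper bound of $\{x+y,0\}$'' and of (ii), (iv) to (i), (iii) via $x^-=(-x)^+$ is airtight. One caveat worth recording: you use $x^+=x\vee 0$, but Definition \ref{definition2.3.4} as printed says $x^+=x\wedge 0$. That is surely a typo in the paper --- with the printed definition, part (i) of the present theorem would already fail in $R$ (take $x=1$, $y=-1$: then $(x+y)^+=0$ while $x^++y^+=-1$, and $\mu(0,-1)>1/2$ fails) --- and your reading is the intended one; but since your whole argument for (i) hinges on $x^+\in U(\{x,0\})$, you should state explicitly that you are working with the corrected definition.
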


\begin{theorem}\ \emph{\cite{BI0}}  \label{theorem2.3*}
If $X$ is a fuzzy Riesz space and $x_1, x_2\in X$, then
\begin{equation*}
x_1+x_2=x_1\vee x_2+ x_1\wedge x_2.
\end{equation*}
\end{theorem}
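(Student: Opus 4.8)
The plan is to deduce the identity from two structural facts that are already available: that negation interchanges finite suprema and infima, and that translation commutes with the formation of suprema. Both are consequences of the results recorded above. For the first, I would apply Theorem~\ref{theorem2.3.2}(ii) with $\lambda=-1$ to the finite family $\{x_1,x_2\}$; this yields $(-x_1)\wedge(-x_2)=-(x_1\vee x_2)$, equivalently
\[
-(x_1\wedge x_2)=(-x_1)\vee(-x_2).
\]
For the second, I would apply Theorem~\ref{theorem2.3.2*} with a two-element family $\{a,b\}$ and the singleton family $\{z\}$, obtaining the translation rule $(a\vee b)+z=(a+z)\vee(b+z)$ whenever $a\vee b$ exists. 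Since $X$ is a fuzzy lattice, all the finite suprema and infima occurring below exist, and by Theorem~\ref{theorem2.1.1} each is the unique element with the defining property, so these identities are genuine equalities in $X$.

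With these in hand I would prove the equivalent statement $x_1+x_2-(x_1\wedge x_2)=x_1\vee x_2$, from which the claimed identity follows by adding $x_1\wedge x_2$ to both sides. Starting from the left-hand side, I would replace $-(x_1\wedge x_2)$ using the negation rule to get $(x_1+x_2)+\big((-x_1)\vee(-x_2)\big)$, and then apply the translation rule with $z=x_1+x_2$:
\[
(x_1+x_2)+\big((-x_1)\vee(-x_2)\big)=\big((x_1+x_2)-x_1\big)\vee\big((x_1+x_2)-x_2\big)=x_2\vee x_1 .
\]
Since $x_2\vee x_1$ and $x_1\vee x_2$ are the supremum of the same two-element set, this is exactly $x_1\vee x_2$, which completes the argument.

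I do not expect a deep obstacle here; the statement is the fuzzy analogue of a standard Riesz space fact and the computation is short once the two structural rules are isolated. The step requiring the most care is the derivation of the negation rule: Theorem~\ref{theorem2.3.2} is phrased for scalar multiples of a family, so one must check that specializing to $\lambda=-1$ and to the two-element family $\{x_1,x_2\}$ is legitimate and yields $-(x_1\wedge x_2)=(-x_1)\vee(-x_2)$ rather than the dual statement about $-(x_1\vee x_2)$. A secondary, purely bookkeeping point is that every rearrangement tacitly uses the existence of the relevant suprema (guaranteed by the fuzzy lattice hypothesis) and their uniqueness (Theorem~\ref{theorem2.1.1}), so that writing ``$=$'' between lattice expressions is justified. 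An alternative, essentially equivalent route is to set $w=x_1-x_2$, use translation to write $x_1\vee x_2=(w\vee 0)+x_2$ and $x_1\wedge x_2=(w\wedge 0)+x_2$, and then reduce to the single identity $(w\vee 0)+(w\wedge 0)=w$; I would prefer the first route since it avoids proving a separate translation rule for infima.
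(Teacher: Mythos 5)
The paper itself offers no proof of this statement: Theorem~\ref{theorem2.3*} is quoted as a known preliminary from the reference \cite{BI0}, so there is no internal argument to compare yours against. Judged on its own, your proof is correct and is self-contained relative to what the paper does quote. The negation rule is a legitimate specialization of Theorem~\ref{theorem2.3.2}(ii): taking $\lambda=-1$ and the family $\{x_1,x_2\}$ gives $(-x_1)\wedge(-x_2)=-(x_1\vee x_2)$, and applying that same identity to the family $\{-x_1,-x_2\}$ (then negating) gives the form you actually use, $-(x_1\wedge x_2)=(-x_1)\vee(-x_2)$; you correctly flag this as the step needing care, and the substitution argument closes it. The translation rule follows from Theorem~\ref{theorem2.3.2*} with $L$ a singleton, modulo the trivial observation that $\sup\{z\}=z$, which holds by reflexivity ($\mu(z,z)=1$); it would be worth one line to say so explicitly. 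Existence of all finite suprema and infima is covered by the fuzzy-lattice axiom in Definition~\ref{definition2.3.3}, and uniqueness (Theorem~\ref{theorem2.1.1}) licenses the equalities. The final computation
\begin{equation*}
(x_1+x_2)+\bigl((-x_1)\vee(-x_2)\bigr)=\bigl((x_1+x_2)-x_1\bigr)\vee\bigl((x_1+x_2)-x_2\bigr)=x_2\vee x_1
\end{equation*}
is then exactly the classical Riesz-space proof of $x+y=x\vee y+x\wedge y$ (cf.\ \cite{LZ}) transplanted to the fuzzy setting, which is presumably also how the result is established in \cite{BI0}.
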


The following theorem is called the \emph{Riesz decomposition theorem} for fuzzy Riesz spaces and the property exhibited in the theorem is called the \emph{Riesz decomposition property} of fuzzy Riesz spaces.

\begin{theorem}\ \emph{\cite{BI0}} \label{theorem2.3.4}
Let $X$ be a fuzzy Riesz space and $x, y_1, ...y_n\in X$. If $\mu(|x|, |y_1+...+y_n|)>1/2$, then there exists elements $x_1,.., x_n\in X$ such that $\mu(|x_i|, |y_i|)>1/2$ for all $i=1, ..., n$ and  $x=x_1+...+x_n$. Moreover, if $x$ is positive, then $x_1, ..., x_n$ can be chosen to be positive.
\end{theorem}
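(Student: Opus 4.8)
The plan is to mimic the classical Riesz decomposition theorem by first reducing to the case $n = 2$ and then inducting on $n$. The heart of the matter is the following two-term statement: if $\mu(|x|,|y_1+y_2|)>1/2$, then $x$ splits as $x = x_1 + x_2$ with $\mu(|x_i|,|y_i|)>1/2$ for $i = 1,2$. Once this is established, the general case follows by grouping $y_2 + \dots + y_n$ as a single element, applying the two-term case to obtain $x = x_1 + x'$ with $\mu(|x_1|,|y_1|)>1/2$ and $\mu(|x'|,|y_2+\dots+y_n|)>1/2$, and then invoking the induction hypothesis on $x'$.

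For the two-term case I would first treat the positive case, since Theorem~\ref{theorem2.3.4} promises that positivity of $x$ can be preserved. Assuming $\mu(0,x)>1/2$ and $\mu(|x|,|y_1+y_2|)>1/2$, the natural candidate is to set
\begin{equation*}
x_1 = x\wedge |y_1| \quad\text{and}\quad x_2 = x - x_1,
\end{equation*}
where the infimum exists because $X$ is a fuzzy lattice. The first factor clearly satisfies $\mu(|x_1|,|y_1|)>1/2$: since $0\le x_1 \le |y_1|$ in the fuzzy-order sense, we get $|x_1| = x_1$ and $\mu(x_1,|y_1|)>1/2$ by Theorem~\ref{theorem2.1.3}(iv). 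The work lies in controlling $x_2 = x - (x\wedge|y_1|)$. Here I would use Theorem~\ref{theorem2.3*} to rewrite $x_2 = (x - |y_1|)\vee 0$ (an identity of the form $x - (x\wedge a) = (x-a)\vee 0$, obtained by subtracting $a$ from both sides of $x + a = (x\vee a) + (x\wedge a)$ and translating), and then estimate $(x-|y_1|)\vee 0$ against $|y_2|$ using the hypothesis $\mu(x,|y_1|+|y_2|)>1/2$ together with the translation-invariance in Definition~\ref{definition2.3.1}(i) and the absolute-value triangle inequality in Theorem~\ref{theorem2.3.3}(i).

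For the general (non-positive) case I would reduce to the positive case via the positive/negative part decomposition. Writing $x = x^+ - x^-$ with both $x^+,x^-$ positive and $\mu(|x|, |x^+|+|x^-|)>1/2$ (in fact $|x| = x^+ + x^-$ up to the usual fuzzy identity), the hypothesis $\mu(|x|,|y_1+\dots+y_n|)>1/2$ lets me apply the positive case separately to $x^+$ and to $x^-$, producing decompositions $x^+ = \sum u_i$ and $x^- = \sum v_i$ with $\mu(|u_i|,|y_i|)>1/2$ and $\mu(|v_i|,|y_i|)>1/2$, and then set $x_i = u_i - v_i$, using Theorem~\ref{theorem2.3.3}(i) once more to bound $\mu(|x_i|,|y_i|)>1/2$.

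The main obstacle I anticipate is the arithmetic of fuzzy inequalities: the fuzzy order is only antisymmetric in the weakened sense of Definition~1.2(ii), and inequalities are recorded as the single condition $\mu(\cdot,\cdot)>1/2$ rather than as a genuine partial order, so every manipulation that would be a one-line lattice identity classically must be re-derived from Theorems~\ref{theorem2.3.3}, \ref{theorem2.3.4} and \ref{theorem2.3*} and from the translation/scaling compatibility of Definition~\ref{definition2.3.1}. In particular, establishing the identity $x - (x\wedge a) = (x-a)\vee 0$ and the bound $(x-|y_1|)\vee 0 \le |y_2|$ in the fuzzy setting, where I cannot freely chain $\le$ relations but must instead verify the $\mu > 1/2$ condition at each step using transitivity (Definition~1.2(iii)) and the additivity of suprema (Theorem~\ref{theorem2.3.2*}), is where the care is required.
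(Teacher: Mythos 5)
A preliminary remark: this theorem is one of the paper's quoted preliminaries, cited to \cite{BI0}, and the paper under review contains no proof of it at all; so your attempt can only be measured against the standard decomposition argument, not against an in-paper proof. On that standard, your core is right. The two-term positive case you outline is exactly the classical route, and it does survive fuzzification: $x_1=x\wedge|y_1|$, $x_2=x-x_1=(x-|y_1|)\vee 0$, where the identity comes from Theorem \ref{theorem2.3*} together with translation invariance of suprema (Theorem \ref{theorem2.3.2*} applied with a one-element second family), and the bound $\mu(x_2,|y_2|)>1/2$ comes from the triangle inequality of Theorem \ref{theorem2.3.3}(i), Definition \ref{definition2.3.1}(i), transitivity, and the least-upper-bound property of $\vee$. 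The grouping induction on $n$ is routine.

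The genuine gap is the last step of your reduction of the general case to the positive case. Having written $x^{+}=\sum_i u_i$ and $x^{-}=\sum_i v_i$ with $0\le u_i\le|y_i|$ and $0\le v_i\le|y_i|$ (all in the sense $\mu(\cdot,\cdot)>1/2$), you set $x_i=u_i-v_i$ and invoke Theorem \ref{theorem2.3.3}(i) to get $\mu(|x_i|,|y_i|)>1/2$. But the triangle inequality only yields $\mu(|x_i|,u_i+v_i)>1/2$, and $u_i+v_i$ is dominated by $2|y_i|$, not by $|y_i|$; as stated the step fails. The repair is cheap but uses a different tool: since $v_i$ is positive, $\mu(x_i,u_i)>1/2$, and since $u_i$ is positive, $\mu(-x_i,v_i)>1/2$; transitivity then makes $|y_i|$ an upper bound of $\{x_i,-x_i\}$, and because $|x_i|=x_i\vee(-x_i)$ is the least upper bound, $\mu(|x_i|,|y_i|)>1/2$. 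Equivalently, use Theorem \ref{theorem2.3.3}(iv): $|u_i-v_i|=(u_i\vee v_i)-(u_i\wedge v_i)$, which is dominated by $u_i\vee v_i$ and hence by $|y_i|$. Alternatively you can bypass the reduction entirely, as the classical proof does, by truncating a general $x$ directly via $x_1=\bigl(x\vee(-|y_1|)\bigr)\wedge|y_1|$ and $x_2=x-x_1$. One further caution: your reduction leans on the identities $x=x^{+}-x^{-}$ and $|x|=x^{+}+x^{-}$, which this paper never states among its preliminaries (its Definition \ref{definition2.3.4} even misprints the positive part as $x\wedge 0$); they do hold in fuzzy Riesz spaces, but in a self-contained write-up they must be cited from \cite{BI0} or re-derived.
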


\begin{definition}\ \emph{\cite{BI0}} \label{definition2.3.5}
\emph{Let $X$ be a fuzzy Riesz space.
\begin{enumerate}
  \item [(i)]Two elements $x_1, x_2\in X$ are said to be \emph{disjoint} or \emph{orthogonal}, denoted by
                $x_1\bot x_2$, if $|x_1|\wedge |x_2|=0$.
  \item [(ii)]An element $x\in X$ is said to be \emph{disjoint} or \emph{orthogonal} to a subset $A$ of $X$, denoted by $x\bot A$, if $x\bot y$ for all $y\in A$.
  \item [(iii)]Two subsets $A_1, A_2\in X$ are said to be \emph{disjoint} or \emph{orthogonal}, denoted by
                $A_1\bot A_2$, if $x_1\bot x_2$ for all $x_1\in A_1$ and $x_2\in A_2$.
\end{enumerate}}
\end{definition}

\begin{theorem}\ \emph{\cite{BI0}}  \label{theorem2.3.5}
Let $X$ be a fuzzy Riesz space.
\begin{enumerate}
  \item [(i)]If $x\bot x_1$ and $x\bot x_2$, then $x\bot (ax_1+bx_2)$ for all $a, b\in R$.
  \item [(ii)]If $x=\vee_{j\in J}x_j$ and $y\bot x_j$, then $y\bot x$.
\end{enumerate}
\end{theorem}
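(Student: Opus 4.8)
The plan is to mimic the classical proofs that the disjoint complement of a set is a linear subspace (part (i)) and is closed under suprema (part (ii)), reading the fuzzy relation $\mu(a,b)>1/2$ as the classical inequality $a\le b$; this translation is legitimate by Theorem~\ref{theorem2.1.3}(iv). Throughout I will use the elementary facts that $|z|$ dominates $z$, $-z$ and $0$, that the lattice operations $\vee,\wedge$ are monotone in the fuzzy order, and that $|z|=z^{+}+z^{-}$, where $z^{+}=z\vee 0$ and $z^{-}=(-z)\vee 0$; the last identity follows from Theorem~\ref{theorem2.3.3}(iv) by taking $y=0$. The identity $x_1+x_2=x_1\vee x_2+x_1\wedge x_2$ (Theorem~\ref{theorem2.3*}) will be used repeatedly to pass between meets, joins and sums.

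For part (i), I would first reduce the claim to two independent assertions: that $x\bot x_1$ forces $x\bot(ax_1)$ for every scalar $a$, and that $x\bot u$ together with $x\bot w$ forces $x\bot(u+w)$; the full statement then follows by applying the second assertion to $ax_1$ and $bx_2$. The engine for both is the Riesz decomposition theorem. For the additive step I set $p=|x|\wedge(|u|+|w|)$, note that $p$ is positive and satisfies $\mu(p,|u|+|w|)>1/2$, and decompose $p=p_1+p_2$ with $\mu(p_1,|u|)>1/2$, $\mu(p_2,|w|)>1/2$ and $p_1,p_2$ positive; since each $p_i$ also lies below $p$ and hence below $|x|$, it is a lower bound of $\{|x|,|u|\}$, respectively $\{|x|,|w|\}$, so $\mu(p_i,0)>1/2$, whence $p_i=0$ and $p=0$. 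Combined with the triangle inequality $\mu(|u+w|,|u|+|w|)>1/2$ (Theorem~\ref{theorem2.3.3}(i)) and monotonicity of $\wedge$, this gives $|x|\wedge|u+w|=0$. For the scalar step I use $|ax_1|=|a|\,|x_1|$ (Theorem~\ref{theorem2.3.3}(ii)): when $|a|\le 1$ monotonicity alone suffices, and when $|a|>1$ I dominate $|a|\,|x_1|$ by $n|x_1|$ for an integer $n>|a|$ and apply the $n$-fold Riesz decomposition exactly as above to conclude $|x|\wedge(n|x_1|)=0$.

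For part (ii), write $x=\vee_{j\in J}x_j$ and recall $-x=\wedge_{j\in J}(-x_j)$ by Theorem~\ref{theorem2.3.2}. The crucial step is the ``infinite distributive'' estimate $\mu(|y|\wedge x^{+},0)>1/2$. Since $x^{+}=x\vee 0=\vee_{j}(x_j\vee 0)=\vee_j x_j^{+}$, and each $x_j^{+}$ is positive and disjoint from $y$ (because $x_j^{+}\le|x_j|$), I must show that meeting $|y|$ with the supremum of the $x_j^{+}$ yields something $\le 0$. I would prove this inline, without a general distributive law: if $c$ is any upper bound of $\{|y|\wedge x_j^{+}\}$, then from $\mu(|y|\wedge x_j^{+},c)>1/2$ and $|y|\wedge x_j^{+}=|y|+x_j^{+}-|y|\vee x_j^{+}$, together with $\mu(|y|\vee x_j^{+},|y|\vee x^{+})>1/2$, I obtain $\mu(x_j^{+},\,c-|y|+|y|\vee x^{+})>1/2$ for every $j$; thus $c-|y|+|y|\vee x^{+}$ bounds $\{x_j^{+}\}$ above, so $x^{+}$ lies below it, which rearranges to $\mu(|y|\wedge x^{+},c)>1/2$. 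Taking $c=0$ and using that $|y|\wedge x^{+}$ is positive gives $|y|\wedge x^{+}=0$. For the negative part, $x^{-}=(-x)\vee 0\le(-x_j)\vee 0=x_j^{-}$ for each $j$, so monotonicity immediately gives $|y|\wedge x^{-}=0$. Finally $|x|=x^{+}+x^{-}$ and one more application of the additive step of part (i) yield $|y|\wedge|x|\le(|y|\wedge x^{+})+(|y|\wedge x^{-})=0$, hence $y\bot x$.

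The main obstacle, and the place where genuine care is needed, is the infinite distributive estimate in part (ii): unlike the finite reductions in part (i), it cannot be dispatched by monotonicity against a single index, and it forces me to run the least-upper-bound argument by hand through the fuzzy order, checking at each turn that the strict relations $\mu(\cdot,\cdot)>1/2$ propagate through addition, transitivity and the defining universal property of the supremum. A secondary point demanding attention is the $|a|>1$ case of the scalar step, the one place where a single disjointness relation must be amplified and where the Riesz decomposition theorem is indispensable.
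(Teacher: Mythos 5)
The paper does not prove this statement at all: Theorem~\ref{theorem2.3.5} is quoted in the preliminaries as a known result from the reference [Beg--Islam, \emph{Fuzzy Riesz spaces}, 1994], so there is no in-paper proof to compare yours against. Judged on its own merits, your argument is correct and uses only tools the paper itself records: the additive step via the Riesz decomposition theorem (Theorem~\ref{theorem2.3.4}) with the observation that each summand $p_i$ is a common lower bound of two disjoint elements and hence zero, the scalar step via $|ax_1|=|a||x_1|$ and domination by $n|x_1|$, and, for part (ii), the split $|x|=x^{+}+x^{-}$ with $x^{+}=\vee_j x_j^{+}$ handled by a hand-run distributivity estimate and $x^{-}$ handled by monotonicity against a single $x_j^{-}$. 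The inline least-upper-bound argument is the right way to avoid assuming an infinite distributive law, and each step you invoke (translation invariance, transitivity, antisymmetry, the universal property of suprema) does survive the translation $a\le b \leftrightarrow \mu(a,b)>1/2$ justified by Theorem~\ref{theorem2.1.3}(iv). One cosmetic point: in the last line of part (ii) you do not need the subadditivity inequality $|y|\wedge(x^{+}+x^{-})\le(|y|\wedge x^{+})+(|y|\wedge x^{-})$ (which would itself require proof); the additive step of part (i) applied to $x^{+}$ and $x^{-}$ already gives $|y|\wedge|x|=0$ directly.
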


\begin{definition}\ \emph{\cite{BI1}}  \label{definition2.3.6}
\emph{A directed ordered fuzzy ordered vector space $X$ is said to be a \emph{fuzzy Arhimedean space} if the set $\{\lambda x\mid \lambda>0\}$ is not bounded above for any nonnegative element $x\in X$. In this case, we also say the space $X$ is \emph{fuzzy Archimedean}.}
\end{definition}
\noindent \textbf{Remark.} A fuzzy Riesz space is directed. Hence, we say a fuzzy Riesz space $X$ is \emph{fuzzy Archimedean} if the set $\{\lambda x\mid \lambda>0\}$ is not bounded above for any nonnegative element $x\in X$.

\begin{theorem}\ \emph{\cite{BI0}}  \label{theorem2.3.7}
Let $X$ be a directed fuzzy ordered vector space. Then $X$ is fuzzy Archimedean if and only if for each nonnegative element $x\in X$ the sequence $\{nx\}_{n\in N}$ is not bounded  above.
\end{theorem}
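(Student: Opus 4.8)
The plan is to translate ``bounded above'' into the membership language provided by the definition of $U(\cdot)$, and then to exploit the set inclusion $\{nx:n\in N\}\subseteq\{\lambda x:\lambda>0\}$ together with the ordinary Archimedean property of $R$. First I would record the working criterion for an upper bound: unwinding the definition of $U(A)$, an element $z$ lies in $U(A)$ precisely when $\mu(a,z)>1/2$ for every $a\in A$. Indeed, the first clause forces $U(A)(z)=0$ as soon as some $(\uparrow a)(z)=\mu(a,z)\le 1/2$, whereas if all these values exceed $1/2$ then $U(A)(z)=\inf_{a\in A}\mu(a,z)\ge 1/2>0$. With this criterion the comparison between the two sets becomes a matter of checking the relation $\mu(\cdot,z)>1/2$.

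For the implication ``sequence unbounded $\Rightarrow$ fuzzy Archimedean'' I would use only the inclusion $\{nx:n\in N\}\subseteq\{\lambda x:\lambda>0\}$. Fix a nonnegative $x$ and suppose, for contradiction, that $w$ were an upper bound of $\{\lambda x:\lambda>0\}$. Taking $\lambda=n$ for each $n\in N$ gives $\mu(nx,w)>1/2$ for all $n$, so $w$ would be an upper bound of the sequence $\{nx\}$, contrary to hypothesis. Hence $\{\lambda x:\lambda>0\}$ has no upper bound, i.e.\ $X$ is fuzzy Archimedean. This direction is routine since every upper bound of the larger set bounds the smaller one.

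The converse is the substantive direction, and I would argue it contrapositively. Suppose that for some nonnegative $x$ the sequence $\{nx\}$ admits an upper bound $z$, so $\mu(nx,z)>1/2$ for all $n\in N$. Given any $\lambda>0$, invoke the Archimedean property of $R$ to choose $n\in N$ with $\lambda\le n$. Since $x$ is nonnegative, $\mu(0,x)>1/2$, so Theorem~\ref{theorem2.3.1}(v) yields $\mu(\lambda x,nx)>1/2$. Testing the transitivity axiom of the fuzzy order at the single witness $y=nx$ then gives $\mu(\lambda x,z)\ge \mu(\lambda x,nx)\wedge \mu(nx,z)>1/2$. As $\lambda>0$ was arbitrary, $z$ is an upper bound of $\{\lambda x:\lambda>0\}$, so $X$ is not fuzzy Archimedean, which is exactly what the contrapositive requires.

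I expect the main obstacle to be precisely this converse, namely upgrading a bound on the integer multiples to a bound on all positive real multiples. The two ingredients that make it go through are the monotonicity of positive scalar multiplication (Theorem~\ref{theorem2.3.1}(v)) and the particular $\vee$--$\wedge$ form of fuzzy transitivity, which need only be evaluated at the one witness $y=nx$ to combine two ``$>1/2$'' comparisons into one. A secondary point to handle with care is the discrepancy between ``nonnegative'' (i.e.\ not negative) and the positivity $\mu(0,x)>1/2$ actually needed to apply Theorem~\ref{theorem2.3.1}(v); I would read nonnegativity here as $0\le x$, that is $\mu(0,x)>1/2$, which is the sense in which the chain $\{\lambda x\}$ is genuinely increasing in $\lambda$.
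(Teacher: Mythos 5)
The paper itself does not prove this statement: Theorem \ref{theorem2.3.7} is quoted as a preliminary result from the cited literature, so there is no in-paper proof to compare yours against. Judged on its own merits, your argument is correct. The unwinding of $U(A)$ into the criterion ``$z\in U(A)$ if and only if $\mu(a,z)>1/2$ for all $a\in A$'' is right (if every $(\uparrow a)(z)$ exceeds $1/2$, the infimum defining $U(A)(z)$ is at least $1/2$, hence positive); the easy direction is indeed nothing more than the inclusion $\{nx : n\in N\}\subseteq\{\lambda x : \lambda>0\}$; and in the converse your two ingredients --- Theorem \ref{theorem2.3.1}(v) to get $\mu(\lambda x,nx)>1/2$ for $\lambda\le n$, followed by transitivity evaluated at the single witness $y=nx$ to get $\mu(\lambda x,z)\ge\mu(\lambda x,nx)\wedge\mu(nx,z)>1/2$ --- are exactly what is needed.

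Your closing caveat should be promoted from a side remark to an explicit convention at the start of the proof, because it is the one point where the argument could genuinely fail. Under the literal Definition \ref{definition2.3.2}, ``nonnegative'' means only ``not negative,'' and for an $x$ incomparable with $0$ (neither $\mu(0,x)>1/2$ nor $\mu(x,0)>1/2$) the scalar-monotonicity step is unavailable: nothing lets you compare $\lambda x$ with $nx$, and the contrapositive direction collapses. Reading ``nonnegative'' as $\mu(0,x)>1/2$ is the right resolution --- it is consistent with the companion statement Theorem \ref{theorem2.3.6}, which is phrased for positive elements, and it is the only reading under which the chain $\{\lambda x\}$ is monotone in $\lambda$ and the theorem is provable by these means. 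With that convention stated up front, your proof is complete.
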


\begin{theorem}\ \emph{\cite{BI0}}  \label{theorem2.3.6}
Let $X$ be a directed fuzzy ordered vector space. Then $X$ is fuzzy Archimedean if and only if $\wedge_{n\in N}\{1/n\ x\}=0$ for any positive element $x\in X$.
\end{theorem}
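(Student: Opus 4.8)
The plan is to work entirely through the reformulation of the Archimedean property furnished by Theorem~\ref{theorem2.3.7}, so that I only ever reason about whether a sequence $\{nz\}_{n\in N}$ is bounded above, never about suprema or infima of arbitrary families. This matters because $X$ is only assumed directed, not a fuzzy lattice, so I cannot form positive parts such as $y\vee 0$. The single computational engine is the scaling compatibility of Definition~\ref{definition2.3.1}(ii): from $\mu(a,b)>1/2$ and $\lambda\geq 0$ one gets $\mu(\lambda a,\lambda b)>1/2$. Applying this with $\lambda=n$ converts a relation $\mu(y,\tfrac1n x)>1/2$ into $\mu(ny,x)>1/2$, and applying it with $\lambda=1/n$ does the reverse. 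Thus ``$y$ is a lower bound of $\{\tfrac1n x\}_{n}$'' and ``$x$ is an upper bound of $\{ny\}_{n}$'' are interchangeable, and this dictionary is what links the two sides of the equivalence.

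For the forward implication, assume $X$ is fuzzy Archimedean and fix a positive $x$. First I would check that $0$ is a lower bound of $\{\tfrac1n x\}_{n}$: since $\mu(0,x)>1/2$ and $1/n\geq 0$, Theorem~\ref{theorem2.3.1}(iii) gives $\mu(0,\tfrac1n x)>1/2$ for every $n$. Next, let $y$ be an arbitrary lower bound, so $\mu(y,\tfrac1n x)>1/2$ for all $n$. Scaling by $n$ yields $\mu(ny,x)>1/2$ for all $n$, i.e. $\{ny\}_{n\in N}$ is bounded above by $x$. By Theorem~\ref{theorem2.3.7} a nonnegative element cannot have its multiples bounded above, so $y$ is not nonnegative; by Definition~\ref{definition2.3.2} this means $y$ is negative, i.e. $\mu(y,0)>1/2$. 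Hence every lower bound of $\{\tfrac1n x\}_{n}$ lies below $0$, so $0$ satisfies both clauses in the definition of infimum, giving $\wedge_{n\in N}(1/n)x=0$.

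For the converse, assume $\wedge_{n\in N}(1/n)w=0$ for every positive $w$, and use Theorem~\ref{theorem2.3.7}: it suffices to show that no nonnegative element has bounded multiples. Suppose, for contradiction, that $x$ is nonnegative and that there is $u$ with $\mu(nx,u)>1/2$ for all $n$. Scaling by $1/n$ gives $\mu(x,\tfrac1n u)>1/2$ for all $n$. To invoke the hypothesis I must replace $u$ by a positive element; here I would use that $X$ is directed: the finite set $\{0,u\}$ has an upper bound $w$, so $\mu(0,w)>1/2$ (i.e. $w$ is positive) and $\mu(u,w)>1/2$. Scaling the latter by $1/n$ and using transitivity of the fuzzy order, $\mu(x,\tfrac1n u)>1/2$ and $\mu(\tfrac1n u,\tfrac1n w)>1/2$ combine to $\mu(x,\tfrac1n w)>1/2$ for all $n$, so $x$ is a lower bound of $\{\tfrac1n w\}_{n}$. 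Since $w$ is positive, the hypothesis gives $\wedge_{n\in N}(1/n)w=0$, and the infimum clause forces $\mu(x,0)>1/2$, i.e. $x$ is negative---contradicting that $x$ is nonnegative.

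The step I expect to be the main obstacle is the bookkeeping around positivity in the converse: the hypothesis is available only for \emph{positive} elements, whereas the bound $u$ produced by a failure of the Archimedean property need not be positive, and, because $X$ is merely directed, I cannot manufacture a positive substitute by taking $u\vee 0$. Routing around this by extracting a positive upper bound $w$ of $\{0,u\}$ from the directedness hypothesis is the crux; everything else reduces to the two scaling moves and one use of transitivity. I would also take care throughout to use the convention of Definition~\ref{definition2.3.2} that ``nonnegative'' means ``not negative'', since it is the logical negation $\neg(\mu(y,0)>1/2)$ that powers the contrapositive reasoning on both sides.
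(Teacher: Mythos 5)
The paper contains no proof of this statement to compare against: Theorem~\ref{theorem2.3.6} is recalled in the preliminaries from \cite{BI0}, and the paper never proves it. Judged on its own merits, your argument is correct. In the forward direction, $0$ is a lower bound of $\{(1/n)x\}_{n\in N}$ by Theorem~\ref{theorem2.3.1}(iii), and for an arbitrary lower bound $y$ the scaling axiom of Definition~\ref{definition2.3.1}(ii) converts $\mu(y,(1/n)x)>1/2$ into $\mu(ny,x)>1/2$, so Theorem~\ref{theorem2.3.7} (also a quoted preliminary, hence legitimately used as a black box) forces $y$ not to be nonnegative, i.e.\ $\mu(y,0)>1/2$; that is precisely the second clause in the paper's definition of infimum, so $\wedge_{n\in N}(1/n)x=0$. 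In the converse, you correctly identify the one real obstacle: the upper bound $u$ of $\{nx\}_{n\in N}$ need not be positive, and $u\vee 0$ is unavailable because $X$ is only assumed directed, not a fuzzy lattice. Your workaround --- taking $w\in U(\{0,u\})$ by directedness, so that $w$ is positive and $\mu(u,w)>1/2$, then combining $\mu(x,(1/n)u)>1/2$ with $\mu((1/n)u,(1/n)w)>1/2$ via transitivity to get $\mu(x,(1/n)w)>1/2$ --- is sound, and the infimum hypothesis applied to $w$ then gives $\mu(x,0)>1/2$, contradicting nonnegativity of $x$. Your careful handling of the paper's conventions (``nonnegative'' means ``not negative''; membership in $U(\cdot)$ and $L(\cdot)$ means the relevant membership grade exceeds $1/2$) is exactly what makes both contrapositive steps go through.
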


We conclude this section by recalling some definitions in linear algebra. Let $V$ be a vector space. An operator $P:V\rightarrow V$ is called a \emph{projection} if $P^2=P$. Let $A_1, A_2$ be two subsets of $V$. Then the \emph{algebraic sum} $A_1+A_2$ is defined as
\begin{equation*}
A_1+A_2=\{x_1+x_2\mid x_1\in A_1, x_1\in A_2\}.
\end{equation*}
If $A_1\cap A_2=\phi$, we write $A_1+A_2$ as $A_1 \oplus A_2$ and call it the \emph{direct sum} of $A_1$ and $A_2$.

\section{Fuzzy Riesz subspaces}

\begin{definition}\label{definition3.1}
\emph{Let $X$ be a fuzzy Riesz space.
\begin{enumerate}
  \item [(i)]A vector subspace $Y$ of $X$ is said to be a \emph{fuzzy Riesz subspace} if for all $x, y\in Y$ the elements $x\vee y$ and $x\wedge y$ belong to $Y$.
  \item [(ii)]A subset $A$ of $X$ is said to be \emph{fuzzy solid} if it follows from $\mu(|x|, |y|)>1/2$ and $y\in A$ that $x\in A$. In this case, we also we $A$ is a \emph{fuzzy solid subset} of $X$.
\end{enumerate}}
\end{definition}
\noindent \textbf{Remark 1.} It is clear from Theorem \ref{theorem2.3.2} that a vector subspace $Y$ of a fuzzy Riesz space $X$ is a fuzzy Riesz subspace if and only if $x, y\in Y$ implies $x\vee y\in Y$.\\

\noindent \textbf{Remark 2.} Every fuzzy solid set $A$ of fuzzy Riesz space is  \emph{circled} (also called \emph{balanced}), that is, $x\in A$ implies $\lambda x\in A$ for all $|\lambda|\leq 1$.\\

The next example shows that a vector subspace of a fuzzy Riesz space need not be a fuzzy Riesz subspace.\\

\noindent \textbf{Example 3.1.} Let $X=C(R)$ be the set of all continuous functions on $R$ with coordinate algebraic operations. Define a membership function $\mu: X\times X\rightarrow [0, 1]$ by
\begin{equation*}
\mu(f, g)=\left\{
            \begin{array}{ll}
              1, & \hbox{if $f\equiv g$;} \\
              2/3, & \hbox{if $f(t)\leq g(t)$ for all $t\in R$ and $f\not \equiv g$;} \\
              0, & \hbox{otherwise.}
            \end{array}
          \right.
\end{equation*}
Then it is easy to see that $X$ is a fuzzy Riesz space. Now let $Y=D(R)$ be the set of all differentiable functions on $R$. Then $Y$ is clearly a vector subspace of $X$. However, Example 2.1 shows that $Y$ is not a fuzzy Riesz subspace of $X$.\\

\begin{definition}\label{definition3.2}
\emph{Let $A$ be a subset of a fuzzy Riesz space $X$. The smallest solid fuzzy subset containing $A$ is called the \emph{fuzzy solid hull} of $A$ and is denote dy $Sol_F(A)$. }
\end{definition}
\noindent \textbf{Remark.} It is easy to see that the fuzzy solid hull $Sol_F(A)$ is given by
\begin{equation*}\label{3.0}
Sol_F(A)=\{x\mid \exists y\in A \text{ such that $\mu(|x|, |y|)>1/2$}\}.
\end{equation*}

\begin{theorem}\label{theorem3.1}
Let $X$ be a fuzzy Riesz space  and $J$ be an arbitrary index set. Then the following two statements hold.
\begin{enumerate}
  \item [(i)]If $Y_1$ is a fuzzy Riesz subspace of $X$ and $Y_2$ is a fuzzy Riesz subspace of $Y_1$, then $Y_2$ is a fuzzy Riesz subspace of $X$.
  \item [(ii)]If $Y_j$ is a fuzzy Riesz subspace of $X$ for each $j\in J$, then $Y=\cap_{j\in J} Y_j$ is a fuzzy Riesz subspace of $X$.
\end{enumerate}
\end{theorem}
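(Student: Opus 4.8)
The plan is to treat the two parts separately, with essentially all the work concentrated in part (i). Part (ii) is immediate: since each $Y_j$ is in particular a vector subspace of $X$, the intersection $Y=\cap_{j\in J}Y_j$ is again a vector subspace of $X$; and if $x,y\in Y$ then $x,y\in Y_j$ for every $j$, so $x\vee y\in Y_j$ for every $j$ by the assumption that each $Y_j$ is a fuzzy Riesz subspace, whence $x\vee y\in\cap_{j\in J}Y_j=Y$. By Remark 1 following Definition \ref{definition3.1}, closure under $\vee$ alone suffices, so $Y$ is a fuzzy Riesz subspace of $X$.

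The subtlety in part (i) is that the hypothesis ``$Y_2$ is a fuzzy Riesz subspace of $Y_1$'' refers to the lattice operations computed inside $Y_1$, whereas the conclusion concerns the lattice operations of the ambient space $X$. So the heart of the matter is a consistency claim: for $x,y\in Y_1$, the supremum of $\{x,y\}$ taken in $Y_1$ coincides with the supremum $x\vee y$ taken in $X$. I would prove this by setting $z=x\vee y$ computed in $X$, noting that $z\in Y_1$ because $Y_1$ is a fuzzy Riesz subspace of $X$, and then verifying the two defining conditions of a supremum relative to $Y_1$. Since the fuzzy order on $Y_1$ is simply the restriction of $\mu$ to $Y_1\times Y_1$, and since by Theorem \ref{theorem2.1.3}(iv) being an upper bound of $\{x,y\}$ amounts to the inequalities $\mu(x,\cdot)>1/2$ and $\mu(y,\cdot)>1/2$, every upper bound of $\{x,y\}$ lying in $Y_1$ is \emph{a fortiori} an upper bound in $X$, and conversely $z\in Y_1$ is an upper bound in $Y_1$ because it is one in $X$. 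The universal property of $z=\sup_X\{x,y\}$ then transfers verbatim to $Y_1$, and uniqueness (Theorem \ref{theorem2.1.1}) identifies it as $\sup_{Y_1}\{x,y\}$. Applied to all pairs, this also shows as a byproduct that $Y_1$ is itself a fuzzy lattice, hence a fuzzy Riesz space, so that the phrase ``$Y_2$ is a fuzzy Riesz subspace of $Y_1$'' is meaningful.

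Granting this claim, part (i) follows quickly. First, $Y_2$ is a vector subspace of $Y_1$ and $Y_1$ is a vector subspace of $X$, so $Y_2$ is a vector subspace of $X$. Next, take $x,y\in Y_2$. Since $Y_2$ is a fuzzy Riesz subspace of $Y_1$, the supremum of $\{x,y\}$ computed in $Y_1$ lies in $Y_2$; by the consistency claim this supremum equals $x\vee y$ computed in $X$, so $x\vee y\in Y_2$. Invoking once more Remark 1 after Definition \ref{definition3.1}, closure under $\vee$ is enough, and we conclude that $Y_2$ is a fuzzy Riesz subspace of $X$.

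The main obstacle is the consistency claim, which is the only place where fuzziness genuinely intervenes. In the classical theory of Riesz spaces this step is a one-line observation, but here one must unwind the membership-function formulation of ``upper bound'' and ``supremum,'' keeping careful track of the threshold $1/2$ and of the fact that the relevant fuzzy order on $Y_1$ is inherited purely by restriction. Once it is checked that the restricted order neither creates new upper bounds nor destroys the defining universal property, everything reduces to the familiar bookkeeping.
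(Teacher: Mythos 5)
Your proposal is correct and takes essentially the same approach as the paper: the paper's proof of (i) likewise sets $z=\sup_X\{x,y\}$, notes $z\in Y_1$, checks that every upper bound of $\{x,y\}$ taken in $Y_1$ is an upper bound in $X$ (the fuzzy order on $Y_1$ being the restriction of $\mu$), concludes $z=\sup_{Y_1}\{x,y\}$ and hence $z\in Y_2$, and handles (ii) by the same intersection argument you give. Your explicit isolation of the consistency claim, and the remark that it guarantees $Y_1$ is itself a fuzzy Riesz space so the hypothesis on $Y_2$ is meaningful, is simply a cleaner packaging of the paper's inline argument.
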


\begin{proof}
\begin{enumerate}
  \item [(i)]Let $x, y\in Y_2$. Then $x, y\in X$; hence $z=\sup_X\{x, y\}$ exists, where $\sup_X$ denotes the supremum is taken in $X$. We need to show that $z\in Y_2$. Since $x, y\in Y_1$ and $Y_1$ is a fuzzy Riesz subspace, we have $z\in Y_1$. Therefore, $z\in U_{Y_1}(\{x, y\})$, where the subscript $Y_1$ denotes that the upper bound is taken in $Y_1$. Now let $w\in U_{Y_1}(\{x, y\})$. Then $U_{Y_1}(x, y)(w)>0$, implying $U_{X}(x, y)(w)>0$; hence $w\in U_{X}(x, y)$. This implies that $z\in U_{X}(w)$.
             In view of $z\in Y_1$, we have $z\in U_{Y_1}(w)$. Therefore, $z=\sup_{Y_1}\{x, y\}$. Since $Y_2$ is a fuzzy Riesz subspace of $Y$, we have $z\in Y_2$. This proves that $Y_2$ is a fuzzy Riesz subspace of $X$.
  \item [(ii)]It is evident that $Y$ is a vector subspace of $X$. Let $x, y\in Y$. Then the hypothesis implies $x, y\in Y_j$ for each $j\in J$. Hence, $x\vee y\in Y_j$ for each $j\in J$, showing that $x\vee y\in Y$. Therefore, $Y$ is a fuzzy Riesz subspace of $X$.
   
\end{enumerate}
\end{proof}

\cite{BI1} defined the notion of sequential convergence in fuzzy order relation; \cite{Beg1} further investigated the properties of this mode of convergence. Below we define the notion of convergence of nets in fuzzy order relation and provide some basic properties.

\begin{definition}\label{definition3.3}
\emph{Let $X$ be a foset. A net $\{x_{\alpha}\}_{\alpha\in A}$ in $X$ is said to be \emph{increasing}, denoted by $x_{\alpha}\uparrow$, if $\mu(x_{\alpha}, x_{\beta})>1/2$ when the indices $\alpha$ and $\beta$ satisfy $\alpha\leq \beta$. If in addition $x=\sup_{\alpha\in A}\{x_{\alpha}\}$ exists, then we write $x_{\alpha}\uparrow x$.
%The notation $x_{\alpha}\uparrow \leq x$ means $x_{\alpha}\uparrow$ and $\mu(x_{\alpha},  x)>\frac{1}{2}$ for all $\alpha\in A$.
Likewise, we can define \emph{decreasing} nets in $X$. The notations $x_{\alpha}\downarrow$ and $x_{\alpha}\downarrow x$ should be interpreted similarly.}
\end{definition}

\begin{notation}
Let $X$ be a foset and $D$ be a subset of $X$. We will use the symbol $D\uparrow$ to denote the fact that $D$ is directed to the right; likewise, $D\downarrow$ denotes the fact that $D$ is directed to the left. The symbol $D\uparrow x$ means $D\uparrow$ and $x=\sup D$; similarly, $D\downarrow x$ means $D\downarrow$ and $x=\inf D$.
\end{notation}

\begin{definition}\label{definition3.4}
\emph{A net $\{x_{\alpha}\}_{\alpha\in A}$ in a fuzzy Riesz space $X$ is said to \emph{converge in fuzzy order} to an element $x\in X$, denoted by $x_{\alpha}\xrightarrow{o_F} x$, if there exists another net $\{y_{\alpha}\}_{\alpha\in A}$ such that $\mu(|x_{\alpha}-x|, y_{\alpha})>1/2$ and $y_{\alpha}\downarrow 0$. In this case, $x$ is said to be the \emph{fuzzy order limit} of  $\{x_{\alpha}\}_{\alpha\in A}$. }
\end{definition}

\begin{theorem}\label{theorem3.2}
The fuzzy order convergence has the following properties.
\begin{enumerate}
  \item [(i)]If $x_{\alpha}\xrightarrow{o_F} x$ and $x_{\alpha}\xrightarrow{o_F} y$, then $x=y$. That is, the fuzzy order limit is unique.
  \item [(ii)]A fuzzy order convergent net is bounded.
  \item [(iii)]If $x_{\alpha}\xrightarrow{o_F} x$, $y_{\alpha}\xrightarrow{o_F} y$ and $\mu(x_{\alpha}, y_{\alpha})>1/2$ for all $\alpha\in A$, then $\mu(x, y)>1/2$.
    \item [(iv)]If $x_{\alpha}\uparrow $ (or $x_{\alpha}\downarrow $), then $x_{\alpha}\xrightarrow{o_F} x$
                if and only if If $x_{\alpha}\uparrow x$ ($x_{\alpha}\downarrow x$ respectively).
    \item [(v)]If $x_{\alpha}\xrightarrow{o_F} x$, then any subnet of $x_{\alpha}$ converges to $x$ in fuzzy order.
    \item [(vi)]If $x_{\alpha}\xrightarrow{o_F} x$, $z_{\alpha}\xrightarrow{o_F} x$, and $\mu(x_{\alpha}, y_{\alpha})>1/2$ and $\mu(y_{\alpha}, z_{\alpha})>1/2$ for all $\alpha\in A$, then
                $y_{\alpha}\xrightarrow{o_F} x$.
    \item [(vii)]If $x_{\alpha}\xrightarrow{o_F} x$, then $x_{\alpha}^+\xrightarrow{o_F} x^+$, $x_{\alpha}^-\xrightarrow{o_F} x^-$, and $|x_{\alpha}|\xrightarrow{o_F} |x|$.
    \item [(viii)]$x_{\alpha}\vee y_{\beta} \underset{(\alpha, \beta)}{\xrightarrow{o_F}} x\vee y$ and
                $x_{\alpha}\wedge y_{\beta} \underset{(\alpha, \beta)}{\xrightarrow{o_F}} x\wedge y$.
    \item [(ix)]If $x_{\alpha}\xrightarrow{o_F} x$ and  $y_{\beta}\xrightarrow{o_F} y$, then
                  $ax_{\alpha}+by_{\beta}\underset{(\alpha, \beta)}{\xrightarrow{o_F}} ax+by$, for all $a, b\in R$.
\end{enumerate}
\end{theorem}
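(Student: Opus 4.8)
The nine assertions are the fuzzy counterparts of the standard facts about order convergence in Riesz spaces, and the plan is to reduce all of them to a small number of reusable tools. Before touching the list I would isolate three facts. First, a \emph{positivity} remark: $\mu(0,|a|)>1/2$ for every $a$, obtained from $|a|=a\vee(-a)$ (so $\mu(a,|a|)>1/2$ and $\mu(-a,|a|)>1/2$), adding these via the Remark after Definition \ref{definition2.3.1} to get $\mu(0,2|a|)>1/2$, and rescaling by $\tfrac12$ using Theorem \ref{theorem2.3.1}. Second, a \emph{squeeze lemma}: if $\mu(a,c_\alpha)>1/2$ for all $\alpha$ and $c_\alpha\downarrow 0$, then $\mu(a,0)>1/2$, because $a$ is then a lower bound of the net $\{c_\alpha\}$ whose infimum is $0$, so $a$ lies below $0$ by the definition of infimum; combined with positivity this forces $a=0$ whenever $a$ is an absolute value. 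Third, a \emph{stability} remark: if $u_\alpha\downarrow 0$ and $v_\beta\downarrow 0$, then over the product directed set $A\times B$ one has $\lambda u_\alpha+\kappa v_\beta\downarrow 0$ for $\lambda,\kappa\geq 0$; this is Theorem \ref{theorem2.3.2} (positive scaling of infima) together with the infimum form of Theorem \ref{theorem2.3.2*} (additivity). These facts, plus the fuzzy triangle inequality $\mu(|a+b|,|a|+|b|)>1/2$ of Theorem \ref{theorem2.3.3}(i), are the engine.

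With the lemmas in hand, parts (i), (ii), (iii) and (vi) are quick. For (i), writing $x-y=(x-x_\alpha)+(x_\alpha-y)$ and applying the triangle inequality gives $\mu(|x-y|,u_\alpha+v_\alpha)>1/2$ with $u_\alpha+v_\alpha\downarrow 0$, so $|x-y|=0$ by the squeeze lemma and hence $x=y$. For (iii), from $\mu(x_\alpha,y_\alpha)>1/2$ I would cancel the term $x_\alpha-y_\alpha\leq 0$ to obtain $\mu(x-y,u_\alpha+v_\alpha)>1/2$, and conclude $\mu(x,y)>1/2$ by the squeeze lemma after translation. Part (vi) is the sandwich $\mu(x_\alpha,y_\alpha),\mu(y_\alpha,z_\alpha)>1/2$: the one-sided estimates $y_\alpha-x\leq z_\alpha-x$ and $x-y_\alpha\leq x-x_\alpha$ give $\mu(|y_\alpha-x|,u_\alpha+w_\alpha)>1/2$, whence $y_\alpha\xrightarrow{o_F}x$ with dominating net $u_\alpha+w_\alpha\downarrow 0$. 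For (ii) I would fix an index $\alpha_0$; since $u_\alpha\leq u_{\alpha_0}$ for $\alpha\geq\alpha_0$, the tail of the net lies in the order interval $[x-u_{\alpha_0},x+u_{\alpha_0}]$, which is the relevant notion of boundedness.

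Parts (iv) and (v) concern monotone nets and subnets. For (iv), the converse direction sets $u_\alpha=x-x_\alpha$ and uses Theorems \ref{theorem2.3.2} and \ref{theorem2.3.2*} to see that $x_\alpha\uparrow x$ forces $u_\alpha\downarrow 0$, while $|x_\alpha-x|=u_\alpha$ gives convergence outright; the forward direction shows $x$ is the least upper bound by noting $x_\beta\leq x+u_\alpha$ for $\alpha\geq\beta$ (so $x_\beta\leq x$ after taking the infimum over $\alpha$) and $x\leq z+u_\alpha$ for any upper bound $z$ (so $x\leq z$). For (v), if the subnet comes from a monotone cofinal reindexing $\gamma\mapsto\alpha_\gamma$, then $u_{\alpha_\gamma}$ is again decreasing and cofinality keeps its infimum equal to $0$, so $u_{\alpha_\gamma}\downarrow 0$ dominates $|x_{\alpha_\gamma}-x|$; here it is precisely the monotonicity of the dominating net that makes the subnet statement work, and I would emphasize this point.

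The remaining parts (vii), (viii) and (ix) require lattice and linear continuity, and this is where the main work lies. The heart is a fuzzy \emph{Birkhoff inequality} $\mu(|a\vee b-c\vee d|,|a-c|+|b-d|)>1/2$, which I would derive from the identities $a_1+a_2=a_1\vee a_2+a_1\wedge a_2$ (Theorem \ref{theorem2.3*}) and $|a-b|=(a\vee b)-(a\wedge b)$ (Theorem \ref{theorem2.3.3}(iv)) together with translation invariance; establishing this cleanly in the fuzzy order, rather than importing it from the crisp theory, is the principal obstacle. Granting it, (viii) follows because $\mu(|x_\alpha\vee y_\beta-x\vee y|,u_\alpha+v_\beta)>1/2$ with $u_\alpha+v_\beta\downarrow 0$ on $A\times B$ by the stability remark, and the meet case is dual. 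Part (vii) is then the special case $b=d=0$ (giving $x_\alpha^+\xrightarrow{o_F}x^+$), its reflection applied to $-x_\alpha$ for $x^-=(-x)^+$, and the reverse triangle inequality $\mu(|x_\alpha|-|x|,|x_\alpha-x|)>1/2$ of Theorem \ref{theorem2.3.3}(iii) for $|x_\alpha|\xrightarrow{o_F}|x|$. Finally (ix) combines the triangle inequality with the scaling rule $|\lambda a|=|\lambda||a|$ of Theorem \ref{theorem2.3.3}(ii) to yield $\mu(|ax_\alpha+by_\beta-(ax+by)|,|a|u_\alpha+|b|v_\beta)>1/2$, and $|a|u_\alpha+|b|v_\beta\downarrow 0$ on the product directed set once more by the stability remark.
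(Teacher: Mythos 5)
Your proposal is correct, and on the only part the paper actually proves in detail --- part (ix) --- your argument is essentially identical to the paper's: decompose $(ax_\alpha+by_\beta)-(ax+by)$, apply the triangle inequality and homogeneity of the absolute value (Theorem \ref{theorem2.3.3}), add the two dominations using the remark after Definition \ref{definition2.3.1}, and observe that $|a|z_\alpha+|b|w_\beta\downarrow 0$ over the product index set. The difference is one of scope: for parts (i)--(vii) the paper simply cites the sequence analogues in \cite{Beg1} and \cite{BI1} (and (viii) is passed over entirely), whereas you prove everything from scratch. Your three preliminary tools --- positivity of absolute values, the squeeze lemma (a lower bound of a net decreasing to $0$ lies below $0$, by the definition of infimum), and stability of decreasing-to-zero nets under positive combinations --- are precisely what makes the ``analogous'' arguments actually go through for nets, and the fuzzy Birkhoff inequality you extract from Theorem \ref{theorem2.3*} and Theorem \ref{theorem2.3.3}(iv) (via $a\vee b=\tfrac12\left(a+b+|a-b|\right)$, then the triangle and reverse triangle inequalities) is the genuine extra ingredient needed for (vii) and (viii) that the paper nowhere records. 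Two of your hedges are exactly right and worth keeping explicit: for (ii), an order-convergent net is in general only \emph{eventually} order bounded (your tail-in-an-interval argument), since the dominating net $u_\alpha$ need not be bounded above when the index set lacks a least element, so the literal statement holds only in the sequence setting the paper defers to; and for (v), your restriction to monotone cofinal (Willard-type) subnets is needed because Definition \ref{definition3.4} demands a dominating net indexed by the same directed set, decreasing and dominating at \emph{every} index, so a Kelley-type subnet would require additional hypotheses (e.g., Dedekind completeness) to manufacture such a net.
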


\begin{proof}
We show (ix) only since the proofs of (i)-(vii) are completely analogous to the proofs of corresponding results for sequences in \cite{Beg1} and \cite{BI1}.
\begin{enumerate}
  \item [(ix)]Since $x_{\alpha}\xrightarrow{o_F} x$ and  $y_{\beta}\xrightarrow{o_F} y$, there exist
                two nets $\{z_{\alpha}\}$ and $\{w_{\beta}\}$ such that $\mu(|x_{\alpha}-x|, z_{\alpha})>1/2$, $\mu(|y_{\beta}-y|, w_{\beta})>1/2$, $z_{\alpha}\downarrow 0$ and $w_{\beta}\downarrow 0$. By the remark following Definition \ref{definition2.3.1} and Theorem \ref{theorem2.3.3}, we have $\mu((ax_{\alpha}+by_{\beta})-(ax+by) , |a| |x_{\alpha}-x|+|b| |y_{\beta}-y|)>1/2$ and $\mu(|a| |x_{\alpha}-x|+|b| |y_{\beta}-y|, |a| z_{\alpha}+|b|w_{\beta})>1/2$. Therefore,
                
                $\mu((ax_{\alpha}+by_{\beta})-(ax+by), |a| z_{\alpha}+|b|w_{\beta})>1/2$. It is clear that $|a| z_{\alpha}+|b|w_{\beta}\downarrow_{(\alpha, \beta)}0$. Hence, $ax_{\alpha}+by_{\beta}\underset{(\alpha, \beta)}{\xrightarrow{o_F}} ax+by$.
\end{enumerate}
\end{proof}

\begin{definition}
\emph{Let $X$ be a fuzzy Riesz space. The set of all positive elements in $X$ is called the \emph{positive cone} of $X$ and is often denoted by $X^+$, that is, $X^+=\{x\in X\mid \mu(0, x)>1/2\}$.}
\end{definition}

\begin{definition}\label{definition3.5}
\emph{Let $S$ be a subset of a fuzzy Riesz space $X$.
\begin{enumerate}
  \item [(i)]$S$ is said to be \emph{fuzzy $\sigma$-order closed} if it follows from $\{x_n\}_{n\in N}\subset S$ and $x_{n}{\xrightarrow{o_F}} x$ that $x\in S$.
  \item [(ii)]$S$ is said to be \emph{fuzzy order closed} if it follows from $\{x_{\alpha}\}_{\alpha\in A}\subset S$ and $x_{\alpha}{\xrightarrow{o_F}} x$ that $x\in S$.
\end{enumerate}
}
\end{definition}

\begin{theorem}\label{theorem3.3}
Let $S$ be a fuzzy solid subset of fuzzy Riesz space $X$. Then the following two statements hold.
\begin{enumerate}
  \item [(i)]$S$ is fuzzy $\sigma$-order closed if and only if $x_n\uparrow x$ implies $x\in S$ for all increasing sequence $\{x_n\}$ in $S^+$.
  \item [(ii)]$S$ is fuzzy order closed if and only if $x_{\alpha}\uparrow x$ implies $x\in S$ for all increasing net $\{x_{\alpha}\}$ in $S^+$.
\end{enumerate}
\end{theorem}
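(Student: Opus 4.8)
The plan is to prove both equivalences by a single argument, carrying out the net statement (ii) in full and observing that (i) is obtained verbatim by replacing nets with sequences and ``fuzzy order closed'' with ``fuzzy $\sigma$-order closed''. The two forward implications are immediate: if $S$ is fuzzy order closed and $\{x_\alpha\}$ is an increasing net in $S^+$ with $x_\alpha\uparrow x$, then $\{x_\alpha\}\subset S^+\subseteq S$, and Theorem \ref{theorem3.2}(iv) gives $x_\alpha\xrightarrow{o_F}x$; closedness of $S$ then yields $x\in S$, which is exactly the right-hand condition.

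For the converse I would assume that the supremum of every increasing net in $S^+$ lies in $S$, take an arbitrary net $\{x_\alpha\}\subset S$ with $x_\alpha\xrightarrow{o_F}x$, and aim for $x\in S$. By Definition \ref{definition3.4} choose a net $y_\alpha\downarrow 0$ with $\mu(|x_\alpha-x|,\,y_\alpha)>1/2$, and set
\begin{equation*}
z_\alpha=|x|-\bigl(|x|\wedge y_\alpha\bigr)=\bigl(|x|-y_\alpha\bigr)\vee 0 .
\end{equation*}
The first task is to show $\{z_\alpha\}\subset S^+$. Since $\mu(0,|x|)>1/2$ (absolute values are positive) and $\mu(0,y_\alpha)>1/2$ (because $0$ is a lower bound of $\{y_\alpha\}$), each $z_\alpha$ is nonnegative. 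To place $z_\alpha$ in $S$ I would bound it by $|x_\alpha|$: Theorem \ref{theorem2.3.3}(iii) gives $\mu(|x|-|x_\alpha|,\,|x_\alpha-x|)>1/2$, and combining this with $\mu(|x_\alpha-x|,y_\alpha)>1/2$ by transitivity, then adding the fixed element $|x_\alpha|-y_\alpha$ (translation invariance, Definition \ref{definition2.3.1}(i)), yields $\mu(|x|-y_\alpha,\,|x_\alpha|)>1/2$; since also $\mu(0,|x_\alpha|)>1/2$, the supremum $z_\alpha=(|x|-y_\alpha)\vee 0$ satisfies $\mu(z_\alpha,|x_\alpha|)>1/2$. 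As $z_\alpha\geq 0$ we have $|z_\alpha|=z_\alpha$, so $\mu(|z_\alpha|,|x_\alpha|)>1/2$ with $x_\alpha\in S$, and fuzzy solidity (Definition \ref{definition3.1}) gives $z_\alpha\in S$, hence $z_\alpha\in S^+$.

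Next I would verify monotonicity and the supremum. For $\alpha\leq\beta$ the element $|x|\wedge y_\beta$ is a lower bound of $\{|x|,y_\alpha\}$, so $\mu(|x|\wedge y_\beta,\,|x|\wedge y_\alpha)>1/2$; thus $|x|\wedge y_\alpha$ decreases and consequently $z_\alpha$ increases. To see $|x|\wedge y_\alpha\downarrow 0$, note $0$ is a lower bound, while any lower bound $w$ satisfies $\mu(w,|x|\wedge y_\alpha)>1/2$ and hence $\mu(w,y_\alpha)>1/2$ for every $\alpha$, so $w\in L(\{y_\alpha\})$ and $\mu(w,0)>1/2$; therefore $\wedge_\alpha(|x|\wedge y_\alpha)=0$. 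Passing the scalar $-1$ through the infimum via Theorem \ref{theorem2.3.2} and adding the constant family $|x|$ via Theorem \ref{theorem2.3.2*} then gives $\vee_\alpha z_\alpha=|x|$, i.e.\ $z_\alpha\uparrow|x|$. The standing hypothesis, applied to the increasing net $\{z_\alpha\}\subset S^+$, now yields $|x|\in S$; finally, since $\mu(|x|,|x|)=1>1/2$, fuzzy solidity applied to $|x|\in S$ produces $x\in S$, completing the converse.

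I expect the main obstacle to be the supremum computation $z_\alpha\uparrow|x|$, specifically the step $|x|\wedge y_\alpha\downarrow 0$ and its transport through subtraction, since the fuzzy lattice does not supply infinite distributivity automatically; the greatest-lower-bound argument above, together with Theorems \ref{theorem2.3.2} and \ref{theorem2.3.2*}, is what makes it go through. The secondary delicate point is the estimate $\mu(z_\alpha,|x_\alpha|)>1/2$, which rests on the fuzzy reverse triangle inequality of Theorem \ref{theorem2.3.3}(iii) combined with transitivity and translation invariance.
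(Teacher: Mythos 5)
Your proof is correct and takes essentially the same route as the paper's: the same key elements $(|x|-y_\alpha)\vee 0 = |x|-(|x|\wedge y_\alpha)$, placed in $S^+$ via fuzzy solidity by bounding them with $|x_\alpha|$, shown to increase to $|x|$, after which the hypothesis gives $|x|\in S$ and solidity gives $x\in S$. The only differences are cosmetic: you carry out the net case rather than the sequence case, and you supply details (the reverse-triangle-inequality bound and the verification that $z_\alpha\uparrow|x|$) that the paper asserts more briefly.
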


\begin{proof}
\begin{enumerate}
  \item [(i)]Suppose $\{x_n\}$ is an increasing sequence in $S^+$ such that $x_n\uparrow x$.
             Then Theorem \ref{theorem3.2} (iv) shows that $x_n\xrightarrow{o_F}x$. Since $S$ is fuzzy $\sigma$-order closed,  we have $x\in S$. For the converse, let $\{x_n\}$ be a sequence in $S$ such that $x_n\xrightarrow{o_F}x$. Then there exists a sequence $\{y_n\}$ in $S$ such that
             $\mu(|x_n-x|, y_n)>1/2$ and $y_n\downarrow 0$.
             Thus, $\mu(x-x_n, y_n)>1/2$; this implies $\mu(x, |x_n|+y_n)>1/2$ which further implies
             $\mu(|x|, |x_n|+y_n)>1/2$ 
             It follows that $\mu((|x|-y_n)^+, |x_n|)>1/2$. 
             By the fuzzy solidness of $S$, we have
             $\{(|x|-y_n)^+\}\subset S$. On the other hand, it is clear that  $(|x|-y_n)^+\uparrow |x|$. Hence, the hypothesis implies $|x|\in S$. It follows from the fuzzy solidness of $S$ that $x\in S$,
             proving that $S$ is fuzzy $\sigma$-order closed.
  \item [(ii)]Similar to the proof of (i).
\end{enumerate}
\end{proof}

\section{Fuzzy ideals}

\begin{definition}\label{definition4.1}
\emph{Let $X$ be a fuzzy Riesz space. A fuzzy solid vector subspace $I$ of $X$ is called a \emph{fuzzy ideal} of $X$. }
\end{definition}
\noindent \textbf{Remark 1.} Our definition of fuzzy ideals is different from the notion of weak ideal defined in \cite{Venugopalan1}. It is clear that a fuzzy ideal is a weak ideal while the converse need not be true.\\

\noindent \textbf{Remark 2.} It is easy to see that (iii) is equivalent to saying that a vector subspace $I$ of $X$ is a fuzzy ideal if it satisfies the following two conditions:
\begin{enumerate}
  \item [(1)]$x\in I$ if and only if $|x|\in I$;
  \item [(2)]if $x$ a positive element in $X$, $\mu(x, y)>1/2$ and $y\in I$, then $x\in I$.\\
\end{enumerate}

\noindent \textbf{Remark 3.} It is clear from Definition \ref{definition3.1} that a vector subspace of $I$ of a fuzzy Riesz space $X$ is a fuzzy ideal if it satisfies the following two conditions:
\begin{enumerate}
  \item [(1)]$x\in I$ if and only if $|x|\in I$;
  \item [(2)]if $x, y\in X^+$ and $y\in I$, then $x\wedge y\in I$.
\end{enumerate}

\begin{theorem}\label{theorem4.1}
Let $X$ be a fuzzy Riesz space and $J$ be an arbitrary index set. Then the following two statements hold.
\begin{enumerate}
  \item [(i)]If $I_1$ is a fuzzy ideal of $X$ and $I_2$ is a fuzzy ideal of $I_1$, then $I_2$ is a fuzzy ideal of $X$.
  \item [(ii)]If $I_j$ is a fuzzy ideal of $X$ for each $j\in J$, then $I=\cap_{j\in J} I_j$ is a fuzzy ideal of $X$.
\end{enumerate}
\end{theorem}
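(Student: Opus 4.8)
The plan is to mirror the proof of Theorem \ref{theorem3.1}, exploiting that by Definition \ref{definition4.1} a fuzzy ideal is nothing but a fuzzy solid vector subspace, and that fuzzy solidness (Definition \ref{definition3.1}(ii)) is phrased purely in terms of $\mu$ and the absolute value. For part (i), I would first observe that $I_2$ is a vector subspace of $X$, since it is a vector subspace of the vector subspace $I_1$. The substance lies in transferring fuzzy solidness from the pair $(I_2, I_1)$ to the pair $(I_2, X)$. So I would take $x \in X$ and $y \in I_2$ with $\mu(|x|, |y|) > 1/2$, where the absolute values are computed in $X$, and aim to conclude $x \in I_2$.

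The argument would proceed in two stages. Since $y \in I_2 \subseteq I_1$ and $I_1$ is fuzzy solid in $X$, the inequality $\mu(|x|, |y|) > 1/2$ forces $x \in I_1$. At this point both $x$ and $y$ live in $I_1$, but to invoke the fuzzy solidness of $I_2$ inside $I_1$ I must know that $\mu(|x|, |y|) > 1/2$ persists when the absolute values are recomputed inside $I_1$. This is exactly where the key step sits: I would argue, as in the proof of Theorem \ref{theorem3.1}(i), that a fuzzy ideal is in particular a fuzzy Riesz subspace, so that the finite suprema and infima taken in $I_1$ coincide with those taken in $X$; consequently the elements $x \vee (-x)$ and $y \vee (-y)$ are unchanged, and $\mu(|x|, |y|) > 1/2$ holds verbatim in $I_1$. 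The fuzzy solidness of $I_2$ in $I_1$ then yields $x \in I_2$, completing the proof that $I_2$ is fuzzy solid, hence a fuzzy ideal, in $X$.

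For part (ii), the intersection $I = \cap_{j \in J} I_j$ is a vector subspace of $X$ as an intersection of vector subspaces. Fuzzy solidness passes to the intersection with no extra work: if $\mu(|x|, |y|) > 1/2$ with $y \in I$, then $y \in I_j$ for every $j$, and the fuzzy solidness of each $I_j$ gives $x \in I_j$ for every $j$, whence $x \in I$. Thus $I$ is a fuzzy solid vector subspace, i.e., a fuzzy ideal of $X$.

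The main obstacle is the compatibility of absolute values in the nested setting of part (i): one must verify that passing to the subspace $I_1$ does not alter the lattice operations, and hence the absolute value, on which the solidness criterion depends. I expect this to reduce, just as in Theorem \ref{theorem3.1}(i), to checking that suprema computed in $I_1$ coincide with suprema computed in $X$, a fact that hinges on $I_1$ being a fuzzy Riesz subspace. Once this coincidence is in hand, both parts are routine, with part (ii) being entirely straightforward.
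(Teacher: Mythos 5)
Your proposal is correct and follows essentially the same route as the paper: for (i) the same two-stage solidness transfer (first $x\in I_1$ by solidness of $I_1$ in $X$, then $x\in I_2$ by solidness of $I_2$ in $I_1$), and for (ii) the identical intersection argument. The only difference is that you explicitly justify that the absolute values computed in $I_1$ coincide with those computed in $X$ (via the supremum-coincidence argument of Theorem \ref{theorem3.1}(i)), a compatibility point the paper's proof uses silently, so your write-up is if anything slightly more careful than the original.
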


\begin{proof}
\begin{enumerate}
  \item [(i)]By Theorem \ref{theorem3.1} (i), we know that $I_2$ is a fuzzy Riesz subspace of $X$. Thus, it suffices to show that $I_2$ is a fuzzy solid subset of $X$. To this end, let $x \in X$ and $y\in I_2$ with $\mu(|x|, |y|)>1/2$. Then $y\in I_1$. Since $I_1$ is a fuzzy ideal of $X$, we have $x\in I_1$. Therefore, we have $x\in I_1, y\in I_2$ and $\mu(|x|, |y|)>1/2$.
      Now the fact that $I_2$ is a fuzzy ideal of $I_1$ implies $x\in I_2$. This shows that that $I_2$ is a fuzzy
        ideal of $X$.
  \item [(ii)]By Theorem \ref{theorem3.1} (ii), $I$ is a fuzzy Riesz subspace of $X$. Let $x\in X$ and $y\in Y$ with $\mu(|x|, |y|)>1/2$. Then $y\in I_j$ for each $j\in J$. Hence, the fuzzy solidness of $I_j$ implies $x\in I_j$ for each $j\in J$, showing that $x\in I$. This proves that $I$ is a fuzzy ideal of $X$.
\end{enumerate}
\end{proof}

\noindent \textbf{Example 4.1.} Let $X=C(R)$ the set of all continuous functions on $R$ with coordinate algebraic operations. Define a membership function $\mu: X\times X\rightarrow [0, 1]$ by
\begin{equation*}
\mu(f, g)=\left\{
            \begin{array}{ll}
              1, & \hbox{if $f(t)\equiv g(t)$;} \\
              2/3, & \hbox{if $f(t)\leq g(t)$ for all $t\in R$ and $f\not\equiv g$;} \\
              0, & \hbox{otherwise.}
            \end{array}
          \right.
\end{equation*}
Then $X$ is a fuzzy Riesz space. Consider $I=L^1(R)$, i.e., the set of all integrable functions on $R$. We claim that $I$ is a fuzzy ideal of $X$. To see this, let $f\in X$ and $g\in Y$ with $\mu(|f|, |g|)>1/2$. Then
the definition of $\mu$ implies either $f\equiv g$  or $f(t)\leq g(t)$, for all $t\in R$ and $f\not \equiv g$. In either case, we have $\int_R|f(t)|dt\leq \int_R|g(t)dt<\infty$, showing that $f$ is integrable, i.e., $f\in I$. Thus, $I$ is a fuzzy ideal of $X$. \\

However, the next two examples show that a fuzzy Riesz subspace need not be a fuzzy ideal.\\

\noindent \textbf{Example 4.2.} Let $X=C(R)$ be the set of all continuous functions on $R$ with coordinate algebraic operations. Define a membership function $\mu: X\times X\rightarrow [0, 1]$ by
\begin{equation*}
\mu(f, g)=\left\{
            \begin{array}{ll}
              1, & \hbox{if $f\equiv g$;} \\
             2/3, & \hbox{if $f(t)\leq g(t)$ for all $t\in R$ and $f\not \equiv g$;} \\
              0, & \hbox{otherwise.}
            \end{array}
          \right.
\end{equation*}
Then $X$ is a fuzzy Riesz space. Consider $Y=\{f\mid \text{$f$ is a constant function on $R$}\}$. Then $Y$ is clearly a fuzzy Riesz subspace of $X$. But $Y$ is not a fuzzy ideal of $X$. To see this, let
\begin{equation*}
f(t)=\left\{
       \begin{array}{ll}
         1-e^{-t}, & \hbox{if $t\geq 0$;} \\
         0, & \hbox{otherwise.}
       \end{array}
     \right.
\end{equation*}
and $g(t)=2$ for all $t\in R$. Then $\mu(|f|, |g|)>1/2$ and $g\in Y$. However, $f\not\in Y$.\\

\noindent \textbf{Example 4.3.} Let $X=R^R$ be the set of all real-valued functions on $R$ with coordinate algebraic operations. Define a membership function $\mu: X\times X\rightarrow [0, 1]$ by
\begin{equation*}
\mu(f, g)=\left\{
            \begin{array}{ll}
              1, & \hbox{if $f\equiv g$;} \\
              2/3, & \hbox{if $f(t)\leq g(t)$ for all $t\in R$ and $f\not \equiv g$;} \\
              0, & \hbox{otherwise.}
            \end{array}
          \right.
\end{equation*}
Then $X$ is a fuzzy Riesz space. Consider $Y=\{f\mid \text{$f$ is a continuous function on $R$}\}$. Then $Y$ is clearly a fuzzy Riesz subspace of $X$. But $Y$ is not a fuzzy ideal of $X$. To see this, put
\begin{equation*}
f(t)=\left\{
       \begin{array}{ll}
         1, & \hbox{if $t\geq 0$;} \\
         -1, & \hbox{if $t<0$.}
       \end{array}
     \right.
\end{equation*}
and $g(t)=2$ for all $t\in R$. Then $\mu(|f|, |g|)>1/2$ and $g\in Y$. However, $f\not\in Y$.\\

\begin{definition}\label{definition4.4}
\emph{Let $D$ be a subset of a fuzzy Riesz space $X$. The smallest fuzzy ideal of $X$ that contains $D$ is called the \emph{fuzzy ideal generated by $D$} and is denoted by $I_D$. If $D$ is a singleton, that is, $D=\{x\}$ for some $x\in X$, then $I_D$ is often written as $I_x$ and is called the \emph{principal fuzzy ideal} generated by $x$. }
\end{definition}

\begin{theorem}\label{theorem4.2}
Let $D$ be a subset of a fuzzy Riesz space $X$.
\begin{enumerate}
  \item [(i)]$I_D$ exists and is unique.
  \item [(ii)]$I_D$ can be descried as follows.
             \begin{equation}\label{3.1}
             I_D=\{x\in X \mid \exists x_1, ..., x_n\in D \text{ and $\lambda\geq 0$ such that $\mu(|x|, \lambda\sum_{i=1}^n|x_i|)>1/2$}\}.
             \end{equation}
\end{enumerate}
\end{theorem}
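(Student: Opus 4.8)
The plan is to prove (i) by the standard intersection argument and then to establish the explicit formula in (ii) by a double inclusion. For (i), I would first observe that the collection $\mathcal{F}$ of all fuzzy ideals of $X$ containing $D$ is nonempty, since $X$ itself is a fuzzy ideal containing $D$. By Theorem~\ref{theorem4.1}(ii) the intersection $\cap\mathcal{F}$ is again a fuzzy ideal; it clearly contains $D$ and is contained in every member of $\mathcal{F}$, so it is the smallest fuzzy ideal containing $D$. This gives existence, and uniqueness is immediate because any two smallest elements of $\mathcal{F}$ under inclusion would each contain the other.

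For (ii), write $I'$ for the set on the right-hand side of \eqref{3.1}; the goal is $I_D=I'$. I would split this into (a) showing $I'$ is a fuzzy ideal containing $D$, whence $I_D\subseteq I'$ by minimality, and (b) showing $I'\subseteq I$ for every fuzzy ideal $I\supseteq D$, whence $I'\subseteq I_D$ on taking $I=I_D$. The containment $D\subseteq I'$ is trivial: for $x\in D$ take $n=1$, $x_1=x$, $\lambda=1$, so $\mu(|x|,|x|)=1>1/2$. Fuzzy solidness of $I'$ is equally immediate: if $\mu(|x|,|y|)>1/2$ with $y\in I'$ and $\mu(|y|,\lambda\sum|x_i|)>1/2$, then transitivity of $\mu$ yields $\mu(|x|,\lambda\sum|x_i|)>1/2$, so $x\in I'$. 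Closure under scalar multiplication follows from $|\alpha x|=|\alpha||x|$ (Theorem~\ref{theorem2.3.3}(ii)) together with the scalar-compatibility of $\mu$ (Definition~\ref{definition2.3.1}(ii)), replacing the coefficient $\lambda$ by $|\alpha|\lambda$.

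The one genuinely substantive step, and the one I expect to be the main obstacle, is closure of $I'$ under addition: given $x$ with witnesses $x_1,\dots,x_n\in D$, $\lambda\ge 0$ and $y$ with witnesses $y_1,\dots,y_m\in D$, $\kappa\ge 0$, I must manufacture a single representation for $x+y$. Here I would first apply the triangle inequality $\mu(|x+y|,|x|+|y|)>1/2$ (Theorem~\ref{theorem2.3.3}(i)) together with the additivity of $\mu$ under sums (the remark after Definition~\ref{definition2.3.1}) to obtain $\mu(|x+y|,\lambda\sum|x_i|+\kappa\sum|y_j|)>1/2$. I would then dominate both coefficients by $\nu=\max(\lambda,\kappa)\ge 0$: since each $|x_i|$ and $|y_j|$ is a positive element, monotonicity under nonnegative scalars (Theorem~\ref{theorem2.3.1}(v)) gives $\mu(\lambda|x_i|,\nu|x_i|)>1/2$ and $\mu(\kappa|y_j|,\nu|y_j|)>1/2$, which sum to $\mu(\lambda\sum|x_i|+\kappa\sum|y_j|,\nu(\sum|x_i|+\sum|y_j|))>1/2$, and transitivity then supplies the required witness list $x_1,\dots,x_n,y_1,\dots,y_m$ with coefficient $\nu$. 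This completes the verification that $I'$ is a fuzzy ideal containing $D$, so $I_D\subseteq I'$.

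Finally, for (b) I would take any fuzzy ideal $I$ with $D\subseteq I$ and any $x\in I'$, say $\mu(|x|,\lambda\sum|x_i|)>1/2$ with $x_1,\dots,x_n\in D$. Since each $x_i\in D\subseteq I$ and $I$ is a solid subspace, $|x_i|\in I$ and hence $\lambda\sum|x_i|\in I$; as this element is positive it coincides with its own absolute value, so fuzzy solidness of $I$ forces $x\in I$. Thus $I'\subseteq I$ for every such $I$, in particular $I'\subseteq I_D$, and combining with (a) yields $I_D=I'$, as claimed.
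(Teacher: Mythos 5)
Your proof is correct and follows essentially the same route as the paper: the intersection argument via Theorem~\ref{theorem4.1} for (i), and for (ii) a double inclusion showing that the right-hand side of (\ref{3.1}) is a fuzzy ideal containing $D$, hence contains $I_D$, and conversely lies inside every fuzzy ideal containing $D$. Your write-up is in fact more complete than the paper's, which only checks $D\subset\widetilde{I}$ and fuzzy solidness before asserting that $\widetilde{I}$ is a fuzzy ideal --- the closure under addition (your coefficient domination by $\max(\lambda,\kappa)$ combined with the triangle inequality) and under scalar multiplication (via $|\alpha x|=|\alpha||x|$), as well as the spelled-out reasoning in the converse inclusion, are steps the paper leaves implicit.
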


\begin{proof}
\begin{enumerate}
  \item [(i)]By Theorem \ref{theorem4.1}, the intersection of all fuzzy ideals containing $D$ is a fuzzy ideal. Clearly,
             this fuzzy ideal is unique and it is the smallest fuzzy ideal that contains $D$.
  \item [(ii)]Let $\widetilde{I}$ denotes the set on the right-hand side of Equation (\ref{3.1}). By taking $n=1$ and $x_1=x$, we know $D\subset \widetilde{I}$. If $x\in X$ and $y\in \widetilde{I}$ with $\mu(|x|, |y|)>1/2$, then there exist $x_1, ..., x_n\in D$ and $\lambda\geq 0$ such that
            $\mu(|y|, \lambda\sum_{i=1}^n |x_i|)>1/2$. It follows that
            $\mu(|x|,  \lambda\sum_{i=1}^n |x_i|)>1/2$, implying $x\in \widetilde{I}$. This shows that $\widetilde{I}$ is a fuzzy ideal containing $D$. Hence, $I_D\subset \widetilde{I}$. Conversely, for $x\in \widetilde{I}$, there  exist $x_1, ..., x_n\in D$ and $\lambda\geq 0$ such that
            $\mu(|x|, \lambda\sum_{i=1}^n |x_i|)>1/2$. Thus, $x\in I_D$.
            This shows that $\widetilde{I}\subset I_D$.
            Therefore, $I_D=\widetilde{I}$, establishing (\ref{3.1}).
\end{enumerate}
\end{proof}

\begin{corollary}\label{corollary4.1}
Let $X$ be a fuzzy Riesz space and $y\in X$. Then principal fuzzy ideal $I_x$ can be described as
             \begin{equation}
             I_x=\{y\in X \mid \exists \lambda\geq 0  \text{ such that $\mu(|y|, \lambda|x|)>1/2$}\}\nonumber.
             \end{equation}
\end{corollary}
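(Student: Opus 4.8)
The plan is to obtain this description directly from Theorem~\ref{theorem4.2}(ii) by specializing the general formula for $I_D$ to the singleton $D=\{x\}$. First I would apply \eqref{3.1} with $D=\{x\}$, which gives
\begin{equation*}
I_x=\{y\in X \mid \exists\, x_1,\ldots,x_n\in\{x\} \text{ and } \lambda\geq 0 \text{ such that } \mu(|y|, \lambda\sum_{i=1}^n|x_i|)>1/2\}.
\end{equation*}
Since the only element of the singleton $D$ is $x$ itself, every $x_i$ appearing here must equal $x$, so $\sum_{i=1}^n|x_i|=n|x|$ and the scaling term in the membership condition becomes $\lambda\sum_{i=1}^n|x_i|=(\lambda n)|x|$.

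The key observation is that the coefficient $\lambda n$ ranges over all of $R^+$ as $\lambda$ ranges over $R^+$ and $n$ over the positive integers; in fact taking $n=1$ already realizes every nonnegative scalar. Hence the condition ``there exist $\lambda\geq 0$ and $n$ with $\mu(|y|,(\lambda n)|x|)>1/2$'' is equivalent to ``there exists $\lambda'\geq 0$ with $\mu(|y|,\lambda'|x|)>1/2$'', where one sets $\lambda'=\lambda n$. Substituting this back into the displayed formula yields exactly the asserted description of $I_x$.

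There is essentially no real obstacle here, as the corollary is a routine specialization of Theorem~\ref{theorem4.2}. The only point needing (minimal) care is checking that collapsing the finite sum $\sum_{i=1}^n|x_i|$ into a single scalar multiple of $|x|$ neither enlarges nor shrinks the set: it does not shrink it because the case $n=1$ is already available, and it does not enlarge it because absorbing the positive integer $n$ into the nonnegative scalar $\lambda$ keeps the coefficient within $R^+$.
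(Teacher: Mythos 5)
Your proof is correct and matches the paper's intent exactly: the paper states Corollary~\ref{corollary4.1} without proof as an immediate specialization of Theorem~\ref{theorem4.2}(ii) to $D=\{x\}$, which is precisely your argument. Your care in checking that absorbing $n$ into $\lambda$ (and recovering every scalar via $n=1$) neither enlarges nor shrinks the set is the only nontrivial point, and you handled it correctly.
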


\begin{theorem}\label{theorem4.3}
Let $X$ be a fuzzy Riesz space and $I_1, I_2$ be two fuzzy ideals of $X$. Then the following statements hold.
\begin{enumerate}
  \item [(i)]$I_1+I_2$ is a fuzzy ideal of $X$.
  \item [(ii)]$I_1^+ + I_2^+ = (I_1+I_2)^+$.
  \item [(iii)]If $I_1\cap I_2=\phi, x=x_1+x_2, y=y_1+y_2$, where $x_1, y_1\in I_1$ and $x_2, y_2\in I_2$, then $\mu(x, y)>1/2$ implies $\mu(x_1, y_1)>1/2$ and $\mu(x_2, y_2)>1/2$.
\end{enumerate}
\end{theorem}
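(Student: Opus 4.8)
The plan is to reduce all three parts to the Riesz decomposition theorem (Theorem~\ref{theorem2.3.4}) together with the fact that $\mu(x,y)>1/2$ is equivalent to $y-x$ being positive, which follows from the translation invariance in Definition~\ref{definition2.3.1}(i). For part (i), note first that $I_1+I_2$ is automatically a vector subspace, so it remains to verify fuzzy solidness. Given $y=y_1+y_2$ with $y_i\in I_i$ and an element $x$ with $\mu(|x|,|y|)>1/2$, I would rewrite $|y|=|y_1+y_2|$ and apply the Riesz decomposition theorem directly to obtain a splitting $x=x_1+x_2$ with $\mu(|x_i|,|y_i|)>1/2$. Fuzzy solidness of each $I_i$ then gives $x_i\in I_i$, so $x\in I_1+I_2$.

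For part (ii), the inclusion $I_1^++I_2^+\subseteq(I_1+I_2)^+$ is immediate: if $a\in I_1^+$ and $b\in I_2^+$ then $a+b\in I_1+I_2$ and $a+b$ is positive by Theorem~\ref{theorem2.3.1}(i). For the reverse inclusion I would take a positive $z=z_1+z_2\in(I_1+I_2)^+$ and again invoke the Riesz decomposition theorem, this time using its ``moreover'' clause: since $z$ is positive, the pieces $x_1,x_2$ with $z=x_1+x_2$ and $\mu(|x_i|,|z_i|)>1/2$ can be chosen positive. Fuzzy solidness places $x_i\in I_i$, and positivity places $x_i\in I_i^+$, whence $z\in I_1^++I_2^+$.

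Part (iii) is the substantive one. Writing $w=y-x$, $w_1=y_1-x_1\in I_1$ and $w_2=y_2-x_2\in I_2$, the hypothesis $\mu(x,y)>1/2$ becomes the statement that $w=w_1+w_2$ is positive, and the two conclusions become the statements that $w_1$ and $w_2$ are positive. The first key step is to show that $I_1\cap I_2=\{0\}$ forces $w_1\perp w_2$: the infimum $|w_1|\wedge|w_2|$ lies below both $|w_1|$ and $|w_2|$, so by fuzzy solidness it lies in both $I_1$ and $I_2$, hence equals $0$, i.e. $|w_1|\wedge|w_2|=0$ in the sense of Definition~\ref{definition2.3.5}.

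The second step is to deduce positivity of $w_1$ (and symmetrically $w_2$) from disjointness together with positivity of the sum. I would work with $w_1^-=(-w_1)\vee 0$ and aim to show $w_1^-=0$. On one hand $w_1^-\le|w_1|$ gives $w_1^-\wedge|w_2|\le|w_1|\wedge|w_2|=0$, and since $w_1^-\wedge|w_2|\ge 0$ this infimum is $0$. On the other hand, positivity of $w$ yields $-w_1\le w_2\le|w_2|$, so $w_1^-=(-w_1)\vee 0\le|w_2|$, which forces $w_1^-\wedge|w_2|=w_1^-$. Comparing the two computations gives $w_1^-=0$, i.e. $w_1$ is positive, hence $\mu(x_1,y_1)>1/2$; the same argument yields $\mu(x_2,y_2)>1/2$. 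The main obstacle I anticipate is not any single deep step but the accumulation of elementary lattice manipulations — monotonicity of $\wedge$ with respect to $\mu(\cdot,\cdot)>1/2$, the inequalities $w_1^-\le|w_1|$ and $-w_1\le|w_2|$, and the passage between $\mu(x,y)>1/2$ and positivity of $w$ — each of which must be justified from the fuzzy axioms (Theorem~\ref{theorem2.1.3} and Definition~\ref{definition2.3.1}) rather than taken for granted from the classical theory.
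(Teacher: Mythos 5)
Your parts (i) and (ii) follow essentially the paper's own route: both are applications of the Riesz decomposition theorem, with its ``moreover'' clause supplying the positive pieces needed for (ii); if anything your application in (i) is cleaner, since the paper makes an unnecessary detour through $x^+$ before decomposing. In part (iii) you genuinely diverge. The paper deduces (iii) from (ii) in two lines: because $I_1\cap I_2$ is trivial, the representation of $y-x$ as an $I_1$-element plus an $I_2$-element is unique; since (ii) writes the positive element $y-x\in(I_1+I_2)^+$ as a sum of positive elements of $I_1$ and $I_2$, uniqueness forces $y_1-x_1$ and $y_2-x_2$ to be exactly those positive pieces. You instead prove the underlying lattice fact directly: solidness plus $I_1\cap I_2=\{0\}$ gives $|w_1|\wedge|w_2|=0$ for $w_1=y_1-x_1$ and $w_2=y_2-x_2$ (the same observation the paper later exploits in the proof of Theorem \ref{theorem5.4}), and then disjointness together with $w_1+w_2\geq 0$ forces $w_1^-=w_1^-\wedge|w_2|\leq |w_1|\wedge|w_2|=0$, which with $w_1^-\geq 0$ gives $w_1^-=0$, i.e.\ $w_1\geq 0$, and symmetrically $w_2\geq 0$. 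Both arguments are sound. The paper's buys brevity by reusing (ii) and the uniqueness of the decomposition; yours is independent of (ii), isolates a reusable lemma (two disjoint elements whose sum is positive are each positive), and is appropriately careful about which elementary lattice manipulations must be re-verified in the fuzzy setting --- all of which do go through, because in this framework $z\in U(A)$ already entails $\mu(a,z)>1/2$ for every $a\in A$, so suprema and infima interact with the relation $\mu(\cdot,\cdot)>1/2$ exactly as in the crisp case.
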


\begin{proof}
\begin{enumerate}
  \item [(i)]Let $x\in X$ and $y\in I_1+I_2$ with $\mu(|x|, |y|)>1/2$. Write $y=y_1+y_2$, where $y_1\in I_1$
             and $y_2\in I_2$.
              Since $\mu(x^+, |x|)>1/2$ and $\mu(|y|, |y_1+y_2|)>1/2$, we have
              $\mu(x^+, |y_1|+|y_2|)>1/2$.
              A fuzzy Riesz space has the Riesz decomposition property; therefore,
              there exist two positive elements $x_1, x_2$ such that $\mu(x_1, |y_1|)>1/2,
              \mu(x_2, |y_2|)>1/2$ and $x=x_1+x_2$. As $y_1\in I_1$ and $y_2\in I$,
              we have $x_1\in I_1$ and $x_2\in I_2$, showing that $x\in I_1+I_2$. Thus, $I_1+I_2$ is a fuzzy ideal of $X$.
  \item [(ii)]Take $x\in (I_1+I_2)^+$. Then $x=x_1+x_2$, where $x_1\in I_1$ and $x_2\in I_2$.
                  We have $\mu(x, |x_1|+|x_2|)>1/2$. Thus, the Riesz decomposition theorem implies
                  that there exist positive elements $\widetilde{x}_1$ and $\widetilde{x}_2$ in $X$ such that
                  $\mu(\widetilde{x}_1, |x_1|)>1/2, \mu(\widetilde{x}_2, |x_2|)>1/2$ and $x=\widetilde{x}_1+\widetilde{x}_2$. Since $x_1\in I_1$ and $x_2\in I_2$, it follows that
                  $\widetilde{x}_1\in I_1^+$ and $\widetilde{x}_2\in I_2^+$. Thus, $x=\widetilde{x}_1+\widetilde{x}_2\in I^+_1 + I_2^+$. This shows that $(I_1+I_2)^+\subset I_1^+ + I_2^+$. It is obvious that $I_1^+ + I_2^+\subset (I_1+I_2)^+$. Therefore, $I_1^+ + I_2^+= (I_1+I_2)^+$.
  \item [(iii)]Since $I_1\cap I_2=\phi$, we have a unique decomposition $y-x=(y_1-x_1)+(y_2-x_2)$,
                where $y_1-x_1\in I_1$ and $y_2-x_2\in I_2$. By the hypothesis $\mu(x, y)>1/2$,
                 we know $y-x\in (I_1+I_2)^+$. It follows from (ii) that $\mu(x_1, y_1)>1/2$ and $\mu(x_2, y_2)>1/2$.
\end{enumerate}
\end{proof}

\begin{definition}\label{definition4.4}
\emph{Let $X$ be a fuzzy Riesz space and $A\subset X$. The set
\begin{equation*}
A^d=\{x\in X\mid x\bot y,\quad \forall y\in A\}
\end{equation*}
is called the \emph{disjoint complement} of $A$. The notation $A^{dd}$ denotes
the disjoint complement of $A^d$, i.e., $A^{dd}=(A^d)^d$. Notations $A^{ddd}, A^{dddd}, ...$ should
be interpreted in the same manner.}
\end{definition}
\noindent \textbf{Remark.} Evidently, if $A_1$ and $A_2$ are two subsets of a fuzzy Riesz space such that $A_1\subset A_2$, then
$A_2^d\subset A_1^d$. \\

\begin{theorem}\label{theorem4.4}
Let $A$ be a subset of a fuzzy Riesz space $X$. Then the following statements hold.
\begin{enumerate}
  \item [(i)]$A\subset A^{dd}$.
  \item [(ii)]$A^d=A^{ddd}$.
  \item [(iii)]$A^d\cap A^{dd}=\{0\}$.
  \item [(iv)]If $A^d=\{0\}$, then $A^{dd}=X$.
  \item [(v)]$A^d$ is a fuzzy ideal of $X$.
  \item [(vi)]If $A$ is a fuzzy ideal of $X$, then for every nonzero element $x\in A^{dd}$ there
               exists a nonzero element $y\in A$ such that $\mu(|y|, |x|)>1/2$.
\end{enumerate}
\end{theorem}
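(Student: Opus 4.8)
The statements (i)--(v) are the fuzzy counterparts of the standard algebra of disjoint complements, and I expect them to follow by formal manipulation of the definition of $A^d$ together with the symmetry and idempotence of disjointness; the real work is concentrated in (vi), which will lean on both (iii) and (v). I would therefore carry out (i)--(v) quickly and reserve the care for the last part.

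For (i) I would simply unwind the definitions: if $x\in A$ and $z\in A^d$, then $z\bot y$ for every $y\in A$, in particular $z\bot x$, and since disjointness is symmetric (because $|u|\wedge|v|=|v|\wedge|u|$ by the commutativity in Theorem \ref{theorem2.1.3}) this gives $x\bot z$; as $z$ ranges over $A^d$ this is exactly $x\in A^{dd}$. Statement (ii) is then purely formal: applying (i) to $A^d$ gives $A^d\subseteq A^{ddd}$, while applying the antitone property of $(\cdot)^d$ (recorded in the remark following the definition of the disjoint complement) to the inclusion $A\subseteq A^{dd}$ of (i) gives $A^{ddd}\subseteq A^d$. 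For (iii), if $x\in A^d\cap A^{dd}$ then $x$ is disjoint from itself, so $|x|=|x|\wedge|x|=0$ by idempotence (Theorem \ref{theorem2.1.3}); I would then conclude $x=0$ by noting that $|x|=0$ forces $\mu(x,0)>1/2$ and $\mu(0,x)>1/2$ (the latter via Theorem \ref{theorem2.3.1}(iv) with $\alpha=-1$) and invoking antisymmetry. Statement (iv) is the observation that $A^{dd}=\{0\}^d$ together with the fact that every element is disjoint from $0$, since $|x|\wedge 0=0$ for the nonnegative element $|x|$ by Theorem \ref{theorem2.1.3}(iv).

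For (v) I would verify the two defining properties of a fuzzy ideal separately. Closure under linear combinations is immediate from Theorem \ref{theorem2.3.5}(i): if $u,v\in A^d$ then $y\bot u$ and $y\bot v$ for each $y\in A$, whence $y\bot(au+bv)$. For fuzzy solidness, suppose $\mu(|x|,|z|)>1/2$ with $z\in A^d$; for $y\in A$ the element $|x|\wedge|y|$ lies below both $|y|$ and, via $|x|\le|z|$ and transitivity, below $|z|$, hence below $|z|\wedge|y|=0$, and since an infimum of nonnegative elements is nonnegative this forces $|x|\wedge|y|=0$, i.e. $x\in A^d$. I would record the monotonicity of $\wedge$ and the nonnegativity of $|x|$ as short preliminary observations, since they are used repeatedly here and below.

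The substantive step is (vi), and this is where I expect the main difficulty. The plan is to argue by the dichotomy "$|x|$ is, or is not, disjoint from all of $A$." First note that, by (v) applied to $A^d$, the set $A^{dd}$ is itself a fuzzy ideal, so from $x\in A^{dd}$ we get $|x|\in A^{dd}$, and $|x|\ne 0$ because $x\ne 0$. If $|x|$ were disjoint from every element of $A$ then $|x|\in A^d$, whence $|x|\in A^d\cap A^{dd}=\{0\}$ by (iii), a contradiction; therefore there exists $z\in A$ with $y:=|x|\wedge|z|\ne 0$. This $y$ is the desired element: it is positive, so $|y|=y$; from $y\le|x|$ we obtain $\mu(|y|,|x|)>1/2$; and from $0\le y\le|z|$ together with $|z|\in A$ and the solidness of the ideal $A$ we get $y\in A$. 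The delicate points will be the fuzzy bookkeeping---verifying $|y|=y$ for the nonnegative infimum $y$, extracting $\mu(y,|x|)>1/2$ from the lower-bound property, and applying solidness of $A$ to the comparison $0\le y\le|z|$---rather than any conceptual obstruction.
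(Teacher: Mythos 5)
Your proposal is correct and follows essentially the same route as the paper's proof: the same formal unwinding for (i)--(iv), the same solidness argument for (v) (you use monotonicity of $\wedge$ where the paper uses the absorption identity $|x|\wedge|y|=|x|$ plus associativity, a trivial variation), and for (vi) the same dichotomy with the same witness $y=|x|\wedge|z|$, merely organized as a direct construction rather than the paper's proof by contradiction. No gaps.
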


\begin{proof}
\begin{enumerate}
  \item [(i)]Let $x\in A$. Then for all $y\in A^d$, we have $x\bot y$. Thus, $x\in A^{dd}$.
  \item [(ii)]It is obvious from (i) that $A^d\subset A^{ddd}$. Conversely, it follows from (i) and the remark following Definition \ref{definition4.4} that $A^{ddd}\subset A^d$. Therefore, $A=A^{ddd}$.
    \item [(iii)]It is clear from Definition \ref{definition2.3.5} that $0\bot x$ for all $x\in X$.
                 Thus, $\{0\}\subset A^d \cap A^{dd}$. For the converse, let $x\in A^d\cap A^{dd}$.
                 Then the definition of $A^{dd}$ implies $|x|\bot |x|$. Therefore, $x=0$, showing that
                 $A^d\cap A^{dd}\subset \{0\}$. It follows that $A^d\cap A^{dd}=\{0\}$.
    \item [(iv)]Since $0\bot x$ for all $x\in X$, we have $\{0\}^d=X$ by Definition \ref{definition2.3.5}. Therefore, the conclusion follows.
    \item [(v)]It is clear from Theorem \ref{theorem2.3.2} and Theorem \ref{theorem2.3.5} that $A^d$
            is a  vector subspace of $X$. Let $x\in X$ and $y\in A^d$ with $\mu(|x|, |y|)>1/2$.
            Then $|y|\wedge |z|=0$ for all $z\in A$. Since $|x|\wedge |y|=|x|$ by Theorem \ref{theorem2.1.3}, for all $z\in A$ we have
            \begin{equation*}
            |x|\wedge |z|=(|x|\wedge |y|)\wedge |z|=|x|\wedge (|y|\wedge|z|)=0.
            \end{equation*}
            Thus, $x\in A^d$. This shows that $A^d$ is a fuzzy ideal.
    \item [(vi)]Suppose not. Let $x\in A^{dd}$ and $x\neq 0$. If there exists some $z\in A$ such that
                $|x|\wedge |z|\neq 0$, then the fuzzy solidness of $A$ implies $|x|\wedge |z|\in A$.
                It is evident that $\mu(|x|\wedge |z|, |x|)>1/2$; this contradicts our hypothesis. Thus, $|x|\wedge |z|=0$ for all $z\in A$, that is $x\in A^d$. It follows from (iii) that
                $x=0$, contradicting the hypothesis that $x\neq 0$. Therefore, $A^{dd}$ must possess the stated property.
\end{enumerate}
\end{proof}
\noindent \textbf{Remark.} It is clear that the proof of (iii) yields a slightly stronger statement:
\emph{If $A$ and $B$ be two disjoint subsets of a fuzzy Riesz space, then either $A\cap B=\phi$ or $A\cap B=\{0\}$.
}\\
\begin{theorem}\label{theorem4.5}
Let $I$ be a fuzzy ideal of a fuzzy Riesz space $X$.
\begin{enumerate}
  \item [(i)]$I^{dd}$ is the largest fuzzy ideal $\widetilde{I}$ in $X$ having the property that
             for every nonzero element $x\in \widetilde{I}$ there exists a nonzero element $y\in I$ such that
             $\mu(|y|, |x|)>1/2$.
  \item [(ii)]$I^d=\{0\}$ if and only if for every nonzero element $x\in X$ there exists a
                nonzero element $y\in I$ such that $\mu(|y|, |x|)>1/2$.
\end{enumerate}
\end{theorem}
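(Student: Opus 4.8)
The plan is to establish (i) in full and then read off (ii) as a consequence. For (i) I would split the work into three parts: that $I^{dd}$ is a fuzzy ideal, that it enjoys the stated property (which I abbreviate as (P): for every nonzero element $x$ of the ideal there is a nonzero $y\in I$ with $\mu(|y|,|x|)>1/2$), and that it is the largest fuzzy ideal with (P). The first two are immediate from Theorem \ref{theorem4.4}: applying part (v) twice shows that $I^d$, and hence $I^{dd}=(I^d)^d$, is a fuzzy ideal, while applying part (vi) with $A=I$ shows that $I^{dd}$ satisfies (P). So only maximality requires genuine work.

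For maximality I would let $\widetilde{I}$ be any fuzzy ideal enjoying (P) and prove $\widetilde{I}\subset I^{dd}$. Take a nonzero $x\in\widetilde{I}$ and argue by contradiction: if $x\notin I^{dd}$, then there is some $z\in I^d$ with $w:=|x|\wedge|z|\neq 0$. Since $w$ is nonnegative and $w\wedge|x|=(|x|\wedge|z|)\wedge|x|=|x|\wedge|z|=w$ by commutativity, associativity and idempotence, Theorem \ref{theorem2.1.3}(iv) gives $\mu(w,|x|)>1/2$; as $|x|\in\widetilde{I}$, the fuzzy solidness of $\widetilde{I}$ forces $w\in\widetilde{I}$. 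Applying (P) to the nonzero element $w$ produces a nonzero $y\in I$ with $\mu(|y|,w)>1/2$. Now $\mu(w,|z|)>1/2$ as well, so transitivity of the fuzzy order yields $\mu(|y|,|z|)>1/2$, i.e. $|y|\wedge|z|=|y|$. But $y\in I$ and $z\in I^d$ give $y\bot z$, so $|y|\wedge|z|=0$; hence $|y|=0$ and $y=0$, contradicting $y\neq 0$. This forces $x\in I^{dd}$ and completes (i).

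For (ii) I would first record the equivalence $I^d=\{0\}\iff I^{dd}=X$: the forward direction is exactly Theorem \ref{theorem4.4}(iv), and the converse follows since $I^{dd}=X$ together with Theorem \ref{theorem4.4}(iii) gives $I^d=I^d\cap I^{dd}=\{0\}$. Then (ii) is read off from (i): if $I^d=\{0\}$, then $I^{dd}=X$ satisfies (P), which is precisely the asserted condition on all nonzero $x\in X$; conversely, if every nonzero $x\in X$ admits a nonzero $y\in I$ with $\mu(|y|,|x|)>1/2$, then $X$ itself is a fuzzy ideal enjoying (P), so (i) gives $X\subset I^{dd}$, whence $I^{dd}=X$ and therefore $I^d=\{0\}$.

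The main obstacle is the maximality step, and within it the delicate point is extracting the contradiction cleanly. The key realization is that (P) must be applied not to $x$ itself but to the auxiliary element $w=|x|\wedge|z|$, whose membership in $\widetilde{I}$ rests on solidness; one then plays the resulting $y\in I$ against $z\in I^d$ through transitivity to collapse $|y|$ to $0$. Everything else is routine bookkeeping with Theorem \ref{theorem2.1.3}(iv) and the disjointness facts of Definition \ref{definition2.3.5}.
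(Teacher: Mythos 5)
Your proposal is correct and follows essentially the same route as the paper: Theorem \ref{theorem4.4}(vi) supplies the stated property, maximality is proved by contradiction through the meet $w=|x|\wedge|z|$ (with $x\in\widetilde{I}$, $z\in I^d$) placed in $\widetilde{I}$ by fuzzy solidness and then fed into the property, and (ii) is read off from (i) together with Theorem \ref{theorem4.4}(iii)--(iv). The only cosmetic difference is the finishing move: you collapse the witness $|y|$ to $0$ via transitivity ($\mu(|y|,|z|)>1/2$) against the disjointness $|y|\wedge|z|=0$, whereas the paper uses solidness of $I^d$ to push the witness into $I\cap I^d=\{0\}$; both are equally valid.
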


\begin{proof}
\begin{enumerate}
  \item [(i)]By Theorem \ref{theorem4.4} (vi), $I^{dd}$ has the stated property.
             It suffices to show that $I^{dd}$ is the largest fuzzy ideal having the stated property.
             Suppose not. Let $\widetilde{I}$ be a fuzzy ideal of $X$ with the
                stated property. Then there exist $x\in \widetilde{I}$ and $y\in I^d$ such that
                $|x| \wedge |y|\neq 0$. Since $(|x| \wedge |y|, |x|)>1/2$, $(|x| \wedge |y|, |y|)>1/2$ and $I^d$ and $\widetilde{I}$ are both fuzzy solid, we have $|x| \wedge |y|\in  \widetilde{I}\cap I^d$. From the hypothesis we may find some nonzero element $z\in I$ such that such that $\mu(|z|, |x|\wedge |y|)>1/2$.
                Since $\widetilde{I}\cap I^d$ is a fuzzy ideal of $X$,
                we have $z\in I\cap I^d=\{0\}$, contradicting $z\neq 0$. Therefore, $I^{dd}$ is the largest fuzzy ideal having the stated property.
  \item [(ii)]The conclusion follows readily from (i) and Theorem \ref{theorem4.4} (iv).
\end{enumerate}
\end{proof}

\begin{definition}\label{definition4.5}
\emph{Let $X$ be a fuzzy Riesz space. A fuzzy Riesz subspace $Y$ of $X$ is said to be \emph{fuzzy order dense} in $X$ if for every nonzero positive element $x\in X$ there exists a nonzero element $y\in Y$ such that $\mu(y, x)>1/2$.}
\end{definition}

\begin{theorem}\label{theorem4.6}
Let $X$ be a fuzzy Riesz space and $I$ be a fuzzy ideal of $X$. Then the following statements hold.
\begin{enumerate}
  \item [(i)]$I$ is fuzzy order dense in $X$ if and only if $I^d=\{0\}$.
  \item [(ii)]$I\oplus I^d$ is fuzzy order dense ideal of $X$.
  \item [(iii)]$I$ is fuzzy order dense in $I^{dd}$.
\end{enumerate}
\end{theorem}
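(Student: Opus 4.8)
The plan is to obtain all three parts from the structural facts about disjoint complements already in hand, so that the only genuinely new work is matching the density condition of Definition \ref{definition4.5} against the characterization in Theorem \ref{theorem4.5}. Two elementary reductions will be used repeatedly: for a positive element $x$ one has $|x|=x$ (since $\mu(0,x)>1/2$, Theorem \ref{theorem2.3.1} (v) with $\alpha=-1\le 1=\beta$ gives $\mu(-x,x)>1/2$, whence $|x|=x\vee(-x)=x$ by Theorem \ref{theorem2.1.3} (iv)), and for $y\in I$ one has $|y|\in I$ by solidity. For part (i) I would show that fuzzy order density of $I$ is simply a reformulation of the condition in Theorem \ref{theorem4.5} (ii). If $I^{d}=\{0\}$ and $x$ is nonzero positive, that theorem yields a nonzero $y\in I$ with $\mu(|y|,|x|)>1/2$; replacing $y$ by $|y|\in I$ and using $|x|=x$ produces a nonzero positive element of $I$ dominated by $x$, so $I$ is fuzzy order dense. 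Conversely, if $I$ is fuzzy order dense, then for arbitrary nonzero $x$ the element $|x|$ is nonzero positive, density supplies a nonzero positive $y\in I$ with $\mu(y,|x|)>1/2$, and $|y|=y$ gives $\mu(|y|,|x|)>1/2$; Theorem \ref{theorem4.5} (ii) then returns $I^{d}=\{0\}$.

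For part (ii) I would first note that $I\cap I^{d}=\{0\}$ by Theorem \ref{theorem4.4} (iii), so $I+I^{d}$ is indeed a direct sum $I\oplus I^{d}$, and it is a fuzzy ideal by Theorem \ref{theorem4.3} (i). By part (i) it then suffices to verify $(I\oplus I^{d})^{d}=\{0\}$. Invoking the anti-monotonicity of $(\cdot)^{d}$ recorded in the remark following Definition \ref{definition4.4}, the inclusions $I\subset I+I^{d}$ and $I^{d}\subset I+I^{d}$ give $(I+I^{d})^{d}\subset I^{d}$ and $(I+I^{d})^{d}\subset I^{dd}$, so $(I+I^{d})^{d}\subset I^{d}\cap I^{dd}=\{0\}$ by Theorem \ref{theorem4.4} (iii). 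Since $0$ is disjoint from every element, equality holds, and part (i) makes $I\oplus I^{d}$ a fuzzy order dense ideal.

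For part (iii) I would regard $I^{dd}$ as a fuzzy Riesz space in its own right: it is a fuzzy ideal of $X$ by two applications of Theorem \ref{theorem4.4} (v), hence a fuzzy Riesz subspace, while $I\subset I^{dd}$ by Theorem \ref{theorem4.4} (i) and $I$ is a fuzzy Riesz subspace of $I^{dd}$. Density is then immediate from Theorem \ref{theorem4.4} (vi): for a nonzero positive $x\in I^{dd}$ that result produces a nonzero $y\in I$ with $\mu(|y|,|x|)>1/2$, and, exactly as in part (i), passing to $|y|\in I$ and using $|x|=x$ yields the required nonzero positive element of $I$ lying below $x$.

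The main obstacle is the equivalence underlying part (i): the density condition is phrased via $\mu(y,x)>1/2$ for positive $x$, whereas Theorem \ref{theorem4.5} (ii) is phrased via $\mu(|y|,|x|)>1/2$ for all nonzero $x$, so the argument turns entirely on the two reductions ``$|x|=x$ for positive $x$'' and ``$y\in I\Rightarrow|y|\in I$,'' together with the freedom to take the witnessing element positive; care is needed here, since a witness of indefinite sign could satisfy $\mu(y,|x|)>1/2$ vacuously. Once part (i) is secured, parts (ii) and (iii) are short deductions from Theorems \ref{theorem4.3} and \ref{theorem4.4}.
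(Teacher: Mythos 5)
Your argument is correct, and for parts (i) and (iii) it takes a recognizably different route from the paper's. For (i) the paper works from scratch: in the forward direction it feeds the density witness into the solidity of $I^d$ to land in $I\cap I^d=\{0\}$, and in the converse direction, given $I^d=\{0\}$, it finds $y\in I^+$ with $x\wedge y\neq 0$ and uses solidity of $I$ to make $x\wedge y$ the required witness. You instead observe that, after the two reductions $|x|=x$ for positive $x$ and $y\in I\Rightarrow |y|\in I$, fuzzy order density is exactly the condition appearing in Theorem \ref{theorem4.5} (ii), so (i) becomes a corollary of already-proved material; this buys economy and reuse, while the paper's converse is self-contained and exhibits the witness $x\wedge y$ explicitly. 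Part (ii) is essentially identical in both treatments: show $(I\oplus I^d)^d\subset I^d\cap I^{dd}=\{0\}$ and quote (i). For (iii) the paper applies (i) inside the fuzzy Riesz space $I^{dd}$, which tacitly uses the fact that the disjoint complement of $I$ computed relative to $I^{dd}$ equals $I^d\cap I^{dd}$; your direct appeal to Theorem \ref{theorem4.4} (vi) sidesteps that relativization entirely and is, if anything, cleaner.

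The caveat you flag at the end deserves emphasis, because it is genuine: if the witness in Definition \ref{definition4.5} is allowed to be non-positive, then statement (i) is actually false. For instance, take $X=R^2$ with the $2/3$-valued membership function used throughout the paper's examples and $I=\{(a,0)\mid a\in R\}$; then $I$ satisfies the literal density condition (for any nonzero positive $x$ the element $(-1,0)$ is a nonzero witness), yet $I^d=\{(0,b)\mid b\in R\}\neq \{0\}$. So the witnessing element must be taken positive, and you assume this without justification. However, this is not a gap relative to the paper: the paper's own forward argument concludes ``hence $y\in I\cap I^d$'' by invoking solidity of $I^d$, which requires $\mu(|y|,|x|)>1/2$ and therefore $|y|=y$, i.e., the same tacit positivity of the witness. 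Your proof is thus at the same level of rigor as the paper's on this point, and both indicate that Definition \ref{definition4.5} is intended to require a positive witness.
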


\begin{proof}
\begin{enumerate}
  \item [(i)]Suppose $I$ is fuzzy order dense in $X$ and let $x\in X^+\cap I^d$.
             If $x\neq 0$, then there exists some nonzero element $y\in I$ such that $\mu(y, x)>1/2$.
             Hence, $y\in I\cap I^d=\{0\}$, implying $y=0$; this contradicts the hypothesis that $y\neq 0$. Thus, we must have $x=0$, i.e., $I^d=\{0\}$. Conversely, suppose $I^d=\{0\}$. Let $x$ be a nonzero element in $X^+$. If $x\wedge y=0$ for all $y\in I^+$, then
             $x$ would belong to $I^d=\{0\}$, contradicting the hypothesis that $x\neq 0$. Thus, there exists
             some $y\in I^+$ such that $x\wedge y\neq 0$. Since $\mu(x\wedge y, x)>1/2$ and $I$ is  fuzzy solid, we have $x\wedge y\in I$. This proves that $I$ is fuzzy order dense.
  \item [(ii)]By Theorem \ref{theorem4.3} (i) and the remark following Theorem \ref{theorem4.4}, we know
                that $I \oplus I^d$ is a fuzzy ideal of $X$. Next, take $x\in (I \oplus I^d)^d$. Then
                $x\bot I$ and $x\bot I^d$, implying $x\in I^d$ and $I\cap I^{dd}$, respectively. Thus, $x=0$, showing that $(I \oplus I^d)^d=\{0\}$. It follows from (i) that $I \oplus I^d$ is fuzzy order dense in $X$.
  \item [(iii)]Since the disjoint complement of $I$ in $I^{dd}$ is $I^d\cap I^{dd}$, the conclusion follows from (i) and Theorem \ref{theorem4.4} (iii).
\end{enumerate}
\end{proof}

\section{Fuzzy bands}

\begin{definition}\label{definition5.1}
\emph{Let $X$ be a fuzzy Riesz space and $I$ be a fuzzy ideal of $X$.
\begin{enumerate}
  \item [(i)]If $I$ is fuzzy $\sigma$-order closed, we say $I$ is a \emph{fuzzy $\sigma$-ideal} of $X$.
            .
  \item [(ii)]If $I$ is fuzzy order-closed, we say $I$ is a \emph{fuzzy band} of $X$.
  \end{enumerate}}
\end{definition}
\noindent \textbf{Remark.} It is clear that a fuzzy ideal $B$ of a fuzzy Riesz space $X$ is a fuzzy band if and only if $D\subset B$ and $x=\sup D$ implies $x\in B$.\\

Theorem \ref{theorem3.3} immediately implies the following theorem.

\begin{theorem}\label{theorem5.0}
Let $I$ be a fuzzy ideal of a fuzzy Riesz space $X$. Then the following two statements hold.
\begin{enumerate}
  \item [(i)]$I$ is a fuzzy $\sigma$-ideal if and only if $x_n\uparrow x$ implies $x\in I$ for all increasing sequence $\{x_n\}$ in $I^+$.
  \item [(ii)]$I$ is a band if and only if $x_{\alpha}\uparrow x$ implies $x\in I$ for all  increasing net $\{x_{\alpha}\}$ in $I^+$.
\end{enumerate}
\end{theorem}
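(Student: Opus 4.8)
The plan is to treat this as a direct corollary of Theorem~\ref{theorem3.3}, exploiting the fact that every fuzzy ideal is, by its very definition, a fuzzy solid subset. First I would invoke Definition~\ref{definition4.1}: since $I$ is a fuzzy ideal of $X$, it is a fuzzy solid vector subspace, and hence in particular a fuzzy solid subset of $X$. This is exactly the hypothesis required to apply Theorem~\ref{theorem3.3} with $S=I$, and it is the only point that needs any verification at all.

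For part (i), I would unwind Definition~\ref{definition5.1}(i): $I$ is a fuzzy $\sigma$-ideal if and only if $I$ is fuzzy $\sigma$-order closed. Applying Theorem~\ref{theorem3.3}(i) to $S=I$ then gives that $I$ is fuzzy $\sigma$-order closed if and only if $x_n\uparrow x$ implies $x\in I$ for every increasing sequence $\{x_n\}$ in $I^+$. Chaining these two equivalences produces the stated characterization.

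For part (ii), the reasoning runs word for word the same, with ``fuzzy $\sigma$-order closed'' replaced throughout by ``fuzzy order closed'', ``fuzzy $\sigma$-ideal'' by ``fuzzy band'' (Definition~\ref{definition5.1}(ii)), ``increasing sequence'' by ``increasing net'', and Theorem~\ref{theorem3.3}(i) by Theorem~\ref{theorem3.3}(ii).

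I do not anticipate any genuine obstacle here. The entire mathematical content resides in Theorem~\ref{theorem3.3}; the present statement merely specializes that result from arbitrary fuzzy solid subsets to fuzzy ideals and then relabels the two closure properties using the terminology of Definition~\ref{definition5.1}. The sole nontrivial ingredient, namely that a fuzzy ideal is fuzzy solid, is immediate from Definition~\ref{definition4.1}, so the proof is essentially a matter of bookkeeping over definitions.
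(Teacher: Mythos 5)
Your proposal is correct and coincides with the paper's own treatment: the paper states that Theorem~\ref{theorem3.3} ``immediately implies'' this result, which is precisely your argument of combining Definition~\ref{definition4.1} (a fuzzy ideal is fuzzy solid), Theorem~\ref{theorem3.3} applied to $S=I$, and the terminology of Definition~\ref{definition5.1}. Nothing is missing; the only difference is that you spell out the bookkeeping the paper leaves implicit.
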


\noindent \textbf{Example 5.1.} Consider $X=L^1([0, 1])$ the set of all integrable functions on $[0, 1]$ with coordinate algebraic operations. Define a membership function $\mu: X\times X\rightarrow [0, 1]$ by
\begin{equation*}
\mu(f, g)=\left\{
            \begin{array}{ll}
              1, & \hbox{if $f\equiv g$;} \\
              2/3, & \hbox{if $f(t)\leq g(t)$ for all $t\in [0, 1]$ and $f\not\equiv  g$;} \\
              0, & \hbox{otherwise.}
            \end{array}
          \right.
\end{equation*}
Then $X$ is a fuzzy Riesz space. Let $B=\{f\in L^1([0, 1])\mid f(x)=0\ a.e. \text{ on $[0,1]$}\}$, that is, the set of almost zero integrable functions on $[0,1]$. We claim that $B$ is a fuzzy band of $X$. To see this, let $h\in X$ and $g\in I$ such that $\mu(|h|, |g|)>1/2$. Then we have $0\leq |h(t)|\leq |g(t)|$ for all $t\in [0, 1]$. Since $g=0\ a.e.$ on $[0, 1]$, it follows $h=0\ a.e.$ on $[0, 1]$. Thus, $h\in B$, showing that $B$ is a fuzzy ideal of $X$. Next, let $\{f_{\alpha}\}$ be a net in $B$ such that $f_{\alpha}\xrightarrow{o_F} f$.
Then there exists a net $\{g_{\alpha}\}$ in  $X$ such that $\mu(|f_{\alpha}-f|, |g_{\alpha}|)>1/2$ and $g_{\alpha}\downarrow 0$.

If $f\not \in B$, that is, $f\neq 0\ a.e$ on $[0, 1]$, then there exists a positive integer $m$ such that
such that $\nu(E_m)>0$, where $E_m=\{x\in [0, 1]\mid f(t)>1/m\}$ and $\nu$ is the Lebesgue measure on $[0, 1]$. Take a sequence $\{f_n\}$ of $\{f_{\alpha}\}$ such that $g_n\downarrow g$. Then we have $f_{n}\xrightarrow{o_F} f$ and $\mu(|f_n-f|, |g_n|)>1/2$. Also, there exists a Lebesgue measurable subset $F_m\subset E_m$ such that $\nu(F_m)>0$, $f_n\equiv 0$ on $F_m$ for all $n\in N$, and $f(t)>1/m$ for all $t\in F$. But each $f_n=0 \ a.e.$ on $[0, 1]$; hence we have $|f_n(t)-f(t)|> 0\ a.e. $ on $F_m$ for all $n\in N$. Therefore, $\mu(1/(m+1), |f_n-f|)=2/3$, implying $\mu(1/(m+1), |g_n|)>1/2$.
It follows that $1/(m+1)\in L(\{g_n\})$, contradicting the fact that $\inf\{g_n\}=0$.\\

The next example shows that a fuzzy ideal need not be a fuzzy band.\\

\noindent \textbf{Example 5.2.}
 Consider $X=R^R$ the set of all real-valued functions on $R$ with coordinate algebraic operations. Define a membership function $\mu: X\times X\rightarrow [0, 1]$ by
\begin{equation*}
\mu(f, g)=\left\{
            \begin{array}{ll}
              1, & \hbox{if $f\equiv g$;} \\
              2/3, & \hbox{if $f(t)\leq g(t)$ for all $t\in R$ and $f\neq g$;} \\
              0, & \hbox{otherwise.}
            \end{array}
          \right.
\end{equation*}
Then $X$ is a fuzzy Riesz space. Let $I=\{f\in X \mid f(0)=0\}$, i.e., the set of real-valued functions vanishing at $0$. We claim that $I$ is a fuzzy ideal but not a fuzzy band of $X$. To see this, let $f\in X$ and $g\in I$ such that $\mu(|f|, |g|)>1/2$. If $\mu(|f|, |g|)=1$, then $|f|=|g|$; hence $f(0)=g(0)$ showing that $f\in I$. If $\mu(|f|, |g|)=2/3$, then $|f(0)|\leq |g(0)|=0$ implying $f(0)=0$; it follows that $f\in I$.
In either case, we have $f\in I$; therefore, $I$ is a fuzzy ideal of $X$. However, $I$ fails to be a fuzzy band of $X$.
To see this, consider $D=\{f_n\}_{n\in N}$, where $f_n$ is defined as
\begin{equation*}
f_n(t)=\left\{
         \begin{array}{ll}
           nt, & \hbox{if $t\leq 1/n$;} \\
           1, & \hbox{otherwise.}
         \end{array}
       \right.
\end{equation*}
Let $f(t)=1$ for all $t\in [0, 1]$. Then $f_n(t)\leq f(t),$ for all $t\in [0, 1]$. Thus, $\mu(f_n, f)>1/2$ for all $n\in N$, implying $f\in U(D)$. Now for any $g\in U(D)$ we have $\mu(f_n, g)>1/2$ for all $n\in N$. By definition of $\mu$, we know $f_n(t)\leq g(t)$ for all $t\in [0, 1]$ and all $n\in N$ ; hence $1\leq g(t)$ for all $t\in [1/n, 1]$ and all $n\in N$, implying $f\equiv 1\leq g(t)$ on $[0, t]$. Thus, $\mu(f, g)>1/2$, showing that $g\in U(f)$. Therefore, $f=\sup D$. Since $f(0)=1\neq 0$, $f\not\in I$. This shows that $I$ is not a fuzzy band of $X$.

\begin{theorem}\label{theorem5.1}
Let $X$ be a fuzzy Riesz space and $J$ be an arbitrary index set. Then the following two statements hold.
\begin{enumerate}
  \item [(i)]If $B_1$ is a fuzzy band of $X$ and $B_2$ is a fuzzy band $B_1$, then $B_2$ is a fuzzy band of $X$.
  \item [(ii)]If $B_j$ is a fuzzy band of $X$ for each $j\in J$, then $B=\cap_{j\in J} B_j$ is a fuzzy band of $X$.
\end{enumerate}
\end{theorem}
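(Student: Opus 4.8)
The plan is to reduce both parts to the fuzzy-ideal results of Theorem \ref{theorem4.1} together with the net criterion of Theorem \ref{theorem5.0}(ii), so that verifying the band property amounts to checking order-closedness only against increasing nets of positive elements. Since a fuzzy band is by definition a fuzzy ideal that is fuzzy order closed, Theorem \ref{theorem4.1}(i) already gives that $B_2$ is a fuzzy ideal of $X$ in part (i), and Theorem \ref{theorem4.1}(ii) gives that $B=\cap_{j\in J}B_j$ is a fuzzy ideal of $X$ in part (ii). Hence in each case it remains only to establish fuzzy order-closedness, and by Theorem \ref{theorem5.0}(ii) it suffices to show: whenever $\{x_{\alpha}\}$ is an increasing net in the positive cone of the candidate ideal and $x_{\alpha}\uparrow x$ (supremum taken in $X$), then $x$ lies in the ideal.

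For part (ii) this is immediate. Given an increasing net $\{x_{\alpha}\}$ in $B^+$ with $x_{\alpha}\uparrow x$, we have $\{x_{\alpha}\}\subset B_j^+$ for every $j\in J$, and since each $B_j$ is a fuzzy band the net criterion yields $x\in B_j$ for every $j$; therefore $x\in\cap_{j\in J}B_j=B$, as required.

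For part (i) the argument is longer because the supremum defining $x_{\alpha}\uparrow x$ is taken in $X$, whereas the band property of $B_2$ lives inside $B_1$. First I would take an increasing net $\{x_{\alpha}\}$ in $B_2^+$ with $x_{\alpha}\uparrow x$ in $X$; since $B_2\subset B_1$ we have $\{x_{\alpha}\}\subset B_1^+$, and because $B_1$ is a fuzzy band of $X$, Theorem \ref{theorem5.0}(ii) gives $x\in B_1$. The crucial step is then to transfer the supremum: I would show that $x=\sup_X\{x_{\alpha}\}$ together with $x\in B_1$ forces $x=\sup_{B_1}\{x_{\alpha}\}$, i.e. the supremum computed inside $B_1$ coincides with the one computed in $X$. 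This is proved exactly as in Theorem \ref{theorem3.1}(i): $x$ is still an upper bound of $\{x_{\alpha}\}$ in $B_1$ because $\mu$ is unchanged on the subspace, and for any $w\in U_{B_1}(\{x_{\alpha}\})$ one has $w\in U_X(\{x_{\alpha}\})$, whence $x\in U_X(w)$ and thus $x\in U_{B_1}(w)$ since $x\in B_1$. With the supremum transferred, $x_{\alpha}\uparrow x$ holds in $B_1$, so applying the band property of $B_2$ inside $B_1$ (again via Theorem \ref{theorem5.0}(ii)) gives $x\in B_2$. This completes the verification that $B_2$ is fuzzy order closed in $X$, hence a fuzzy band.

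The step I expect to be the main obstacle is precisely this supremum-transfer in part (i): one must take care that \emph{increasing} and \emph{order closed} are relative notions, and that passing between the orders of $X$, $B_1$, and $B_2$ does not alter the relevant suprema. Everything else is a routine application of the ideal results and the net characterization of fuzzy bands.
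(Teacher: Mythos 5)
Your proposal is correct, and its skeleton matches the paper's: both parts get the ideal property from Theorem \ref{theorem4.1}, part (ii) is disposed of by noting the intersection inherits closedness, and part (i) is proved by routing through $B_1$. The genuine difference lies in how the key relativization step in (i) is handled. The paper argues directly with fuzzy order convergence: it takes a net $\{x_\alpha\}\subset B_2$ with $x_\alpha\xrightarrow{o_F}x$ in $X$, concludes $x\in B_1$ from the band property of $B_1$, and then simply asserts ``thus $x_\alpha\xrightarrow{o_F}x$ in $B_1$'' before invoking the band property of $B_2$ in $B_1$. That assertion is actually delicate, since fuzzy order convergence in $X$ is witnessed by a dominating net $y_\alpha\downarrow 0$ that need not lie in $B_1$, and the paper offers no justification for transferring it. Your route via Theorem \ref{theorem5.0}(ii) sidesteps this entirely: increasing nets and their suprema relativize cleanly, and your explicit supremum-transfer argument (that $x=\sup_X\{x_\alpha\}$ with $x\in B_1$ forces $x=\sup_{B_1}\{x_\alpha\}$, proved exactly as in Theorem \ref{theorem3.1}(i)) is sound, because the upper-bound fuzzy sets $U_{B_1}$ and $U_X$ agree on elements of $B_1$. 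So your version is not just correct; it is more rigorous than the paper's at precisely the step you identified as the main obstacle, at the modest cost of first invoking the net characterization of bands rather than working from the definition of order closedness.
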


\begin{proof}
\begin{enumerate}
  \item [(i)]By Theorem \ref{theorem4.1} (i), $B_2$ is a fuzzy ideal of $X$. It remains to show that $B_2$ is fuzzy order-closed in $X$. To this end, let $\{x_{\alpha}\}$ be a net in $B_2$ such that
            $x_{\alpha}\xrightarrow{o_F} x$ in $X$. Since $B_2\subset B_1$ and $B_1$ is a fuzzy band of $X$, we have
            $x\in B_1$. Thus, $x_{\alpha}\xrightarrow{o_F} x$ in $B_1$. As $B_2$ is a fuzzy band of $B_1$,
            we have $x\in B_2$, proving that $B_2$ is a fuzzy band of $X$.
  \item [(ii)]By Theorem \ref{theorem4.1} (ii), $B=\cap_{j\in J} B_j$ is a fuzzy ideal of $X$. Since the intersection of fuzzy order-closed sets is obviously fuzzy order-closed, $B$ is a fuzzy band of $X$.
\end{enumerate}
\end{proof}

\begin{definition}\label{definition5.2}
\emph{Let $D$ be a subset of a fuzzy Riesz space $X$. The smallest band in $X$ that contains $D$ is called the \emph{fuzzy band generated by $D$} and is denoted by $B_D$. If $D$ is a singleton, that is, $D=\{x\}$ for some $x\in D$, then $B_D$ is often written as $B_x$ and is called the \emph{principal fuzzy band} generated by $x$. }
\end{definition}

\begin{theorem}\label{theorem5.2}
Let $D$ be a subset of a fuzzy Riesz space $X$.
\begin{enumerate}
  \item [(i)]$B_D$ exists and is unique.
  \item [(ii)]$B_D$ can be descried as follows.
             \begin{equation}\label{5.1}
             B_D=\{x\in X \mid \text{ there exists a net $\{x_{\alpha}\}_{\alpha\in A}\in I_D^+$ such that $x_{\alpha}\uparrow |x|$}\}.\nonumber
             \end{equation}
\end{enumerate}
\end{theorem}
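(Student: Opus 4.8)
The plan is to prove (i) exactly as in Theorem~\ref{theorem4.2}(i): the family of fuzzy bands of $X$ containing $D$ is nonempty (it contains $X$ itself), so by Theorem~\ref{theorem5.1}(ii) its intersection $B_D$ is again a fuzzy band, and it is by construction the smallest fuzzy band containing $D$; uniqueness of a smallest element is then immediate. For (ii), writing $\widetilde{B}$ for the set on the right-hand side, I would establish $B_D=\widetilde{B}$ by showing that $\widetilde{B}$ is itself a fuzzy band containing $D$ (giving $B_D\subset\widetilde{B}$) and that every fuzzy band $B$ with $D\subset B$ satisfies $\widetilde{B}\subset B$ (giving, with $B=B_D$, the reverse inclusion).

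First I would check the easy inclusion $D\subset\widetilde{B}$: for $x\in D$ the element $|x|$ lies in $I_D^+$, and the constant net $x_\alpha=|x|$ satisfies $x_\alpha\uparrow|x|$, so $x\in\widetilde{B}$. Next I would verify that $\widetilde{B}$ is a fuzzy ideal. Solidity is the key auxiliary step: if $x\in\widetilde{B}$ with witnessing net $x_\alpha\uparrow|x|$ in $I_D^+$ and $\mu(|y|,|x|)>1/2$, then the net $y_\alpha=x_\alpha\wedge|y|$ lies in $I_D^+$ (it is positive and dominated by $x_\alpha\in I_D$, so solidity of $I_D$ applies), is increasing, and by Theorem~\ref{theorem3.2}(viii) converges in fuzzy order to $|x|\wedge|y|=|y|$; since it is increasing, Theorem~\ref{theorem3.2}(iv) gives $y_\alpha\uparrow|y|$, whence $y\in\widetilde{B}$. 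Closure under scalars follows from Theorem~\ref{theorem2.3.2} together with $|\lambda x|=|\lambda||x|$, and closure under addition follows by first noting that for $x,y\in\widetilde{B}$ the net $x_\alpha+y_\beta$ increases to $|x|+|y|$ (Theorem~\ref{theorem2.3.2*}), so that the positive element $|x|+|y|$ lies in $\widetilde{B}$, and then invoking solidity together with $\mu(|x+y|,|x|+|y|)>1/2$ (Theorem~\ref{theorem2.3.3}(i)).

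The heart of the argument, and the step I expect to be the main obstacle, is showing that $\widetilde{B}$ is fuzzy order-closed, for which I would use the criterion of Theorem~\ref{theorem5.0}(ii): it suffices to take an increasing net $z_\gamma\uparrow z$ with $z_\gamma\in\widetilde{B}^+$ and show $z\in\widetilde{B}$. The idea is to produce a single directed family whose supremum is $z$, namely $S=\{w\in I_D^+\mid\mu(w,z)>1/2\}$. This $S$ is directed to the right because $I_D$ is a fuzzy Riesz subspace and $z$ bounds $w_1\vee w_2$ whenever $w_1,w_2\in S$. The delicate point is to prove $\sup S=z$: that $z$ is an upper bound of $S$ is plain, and for the reverse I would use that each $z_\gamma$ is itself the supremum of a net in $I_D^+$, all of whose terms lie in $S$; hence any upper bound $u$ of $S$ dominates every such term, so dominates each $z_\gamma$ by the least-upper-bound property, and therefore dominates $z$. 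Thus $z=\sup S$, and viewing $S$ as a net indexed by itself gives a net in $I_D^+$ with $S\uparrow z=|z|$, so $z\in\widetilde{B}$.

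Finally, for minimality, given any fuzzy band $B\supset D$ and any $x\in\widetilde{B}$ with witnessing net $x_\alpha\uparrow|x|$ in $I_D^+$, I would use $I_D\subset B$ (since $B$ is a fuzzy ideal containing $D$ and $I_D$ is the smallest such) so that $x_\alpha\in B^+$; the band criterion Theorem~\ref{theorem5.0}(ii) then forces $|x|\in B$, and solidity of $B$ yields $x\in B$. Hence $\widetilde{B}\subset B$, and taking $B=B_D$ together with $B_D\subset\widetilde{B}$ gives $B_D=\widetilde{B}$, which establishes (ii).
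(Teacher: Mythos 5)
Your proposal is correct and takes essentially the same route as the paper's proof: in (i) the intersection of all fuzzy bands containing $D$, and in (ii) showing the right-hand side $\widetilde{B}$ contains $D$, is a fuzzy ideal (solidity by wedging the witnessing net with $|y|$ and invoking Theorem~\ref{theorem3.2}, linearity via Theorem~\ref{theorem2.3.2*}), is fuzzy order closed via a directed subset of $I_D^+$ whose supremum is the limit, and lies inside every fuzzy band containing $D$ by Theorem~\ref{theorem5.0}. The only cosmetic deviations are that you obtain closure under addition from solidity plus $\mu(|x+y|,|x|+|y|)>1/2$ instead of the paper's direct computation with $|x+y|\wedge(x_\alpha+y_\beta)$, and your directed set $S=\{w\in I_D^+\mid \mu(w,z)>1/2\}$ replaces the paper's union $E=\bigcup_\alpha E_\alpha$; both devices serve the identical purpose, and your write-up actually supplies details the paper leaves terse.
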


\begin{proof}
\begin{enumerate}
  \item [(i)]Theorem \ref{theorem5.1} shows that the intersection of all fuzzy bands containing $D$ is a fuzzy ideal. Clearly,
             this fuzzy band is unique and it is the smallest fuzzy band that contains $D$.
  \item [(ii)]Let $\widetilde{B}$ denote the left-hand side of (\ref{5.1}). From Theorem \ref{theorem5.0}, we see that if a fuzzy band $B$ contains $D$, then it evidently contains $\widetilde{B}$. Also, it is clear
            that $D\subset \widetilde{B}$. Thus, it suffices to show that $\widetilde{B}$ is a fuzzy band of $X$. To this end, let $x, y\in  \widetilde{B}$. Then there are two nets $\{x_{\alpha}\}_{\alpha\in A}$ and $\{y_{\beta}\}_{\beta\in B}$ such that $x_{\alpha}\uparrow |x|$ and $y_{\beta}\uparrow |y|$.
            For indices $\alpha_1\leq \alpha_2$ and $\beta_1\leq \beta_2$, we have
            $\mu(x_{\alpha_1}, x_{\alpha_2})>1/2$ and $\mu(y_{\beta_1}, y_{\beta_2})>1/2$. It follows
            that $\mu(x_{\alpha_1}+y_{\beta_1}, x_{\alpha_2}+y_{\beta_2})>1/2$, that is, $x_{\alpha}+y_{\beta}\uparrow_{\alpha, \beta}$.
            Then  Theorem \ref{theorem3.2} shows that $|x+y|\wedge(x_{\alpha}+y_{\beta})\uparrow_{\alpha, \beta} |x+y|$. Similarly, we have $|\lambda|x_{\alpha}\uparrow |\lambda x|$ for each $\lambda\in R$. Therefore, $\widetilde{B}$ is a vector subspace of $X$.   Next, let $z\in X$ such that $\mu(|z|, |x|)>1/2$. Since $\mu(|z|\wedge x_{\alpha}, x_{\alpha})>1/2$ and $\mu(x_{\alpha}, x)>1/2$, we have $\mu(|z|\wedge x_{\alpha}, x)>1/2$ for each $\alpha$. It follows from the fuzzy solidness of $I_D$ that  $\{|z|\wedge x_{\alpha}\}\subset I_D^+$. Clearly, the net $\{|z|\wedge x_{\alpha}\}$ is increasing. Hence,  Theorem \ref{theorem3.2} implies that $|z|\wedge x_{\alpha}\uparrow |z|$. Thus, $z\in B$, showing that $B$ is a fuzzy ideal of $X$. Finally, let $\{w_{\alpha}\}_{\alpha\in A}\subset \widetilde{B}^+$ such that $w_{\alpha}\uparrow w$. Define $E=\{v\in I_D^+\mid \mu(v, w_{\alpha})>1/2 \text{ for some $\alpha\in A$}\}$. Then $E\subset I_D^+$ and
            $\sup(E)=\sup_{\alpha\in A}\{\sup E_{\alpha}\}$, where $E_{\alpha}=\{v\in I_D^+\mid \mu(v, w_{\alpha})>1/2\}$.
            Therefore, $\widetilde{B}$ is a fuzzy band of $X$, establishing $B_D=\widetilde{B}$.
\end{enumerate}
\end{proof}

\begin{corollary}\label{corollary5.1}
Let $X$ be a fuzzy Riesz space and $x\in X$. Then principal fuzzy band $B_x$ can be described as
             \begin{equation}
             B_x=\{y\in X \mid |y|\wedge (n|x|)\uparrow |y|\}.\nonumber
             \end{equation}
\end{corollary}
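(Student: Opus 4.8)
The plan is to read off the result as the singleton case $D=\{x\}$ of Theorem~\ref{theorem5.2}~(ii), using Corollary~\ref{corollary4.1} to unwind what membership in $I_x^+$ means. With $D=\{x\}$, Theorem~\ref{theorem5.2}~(ii) gives $B_x=\{y\in X\mid \text{there is a net }\{y_{\alpha}\}\subset I_x^+\text{ with }y_{\alpha}\uparrow|y|\}$, while Corollary~\ref{corollary4.1} says $v\in I_x^+$ precisely when $v$ is positive and $\mu(v,\lambda|x|)>1/2$ for some $\lambda\geq 0$. I would prove the two inclusions separately; throughout I use that $|x|$ is positive, so Theorem~\ref{theorem2.3.1}~(v) yields $\mu(m|x|,n|x|)>1/2$ whenever $m\leq n$. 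The degenerate case $x=0$, where both sides collapse to $\{0\}$, can be disposed of at the outset.

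For the inclusion of the right-hand side into $B_x$, suppose $|y|\wedge(n|x|)\uparrow|y|$ and set $y_n=|y|\wedge(n|x|)$. Each $y_n$ is positive, and since the meet is dominated by each of its factors we have $\mu(y_n,n|x|)>1/2$; taking $\lambda=n$ in Corollary~\ref{corollary4.1} places $y_n$ in $I_x^+$. As the sequence $\{y_n\}$ is increasing (by monotonicity of the meet together with $\mu(n|x|,(n+1)|x|)>1/2$) and converges upward to $|y|$ by hypothesis, the defining net for $B_x$ is witnessed by $\{y_n\}_{n\in N}$, so $y\in B_x$.

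For the reverse inclusion, take $y\in B_x$ with a net $\{y_{\alpha}\}\subset I_x^+$ satisfying $y_{\alpha}\uparrow|y|$. I claim $|y|$ is the supremum of the increasing sequence $\{|y|\wedge(n|x|)\}_n$. That $|y|$ is an upper bound is immediate, since each $|y|\wedge(n|x|)$ is dominated by $|y|$. For the least-upper-bound property, let $u$ be any upper bound of the sequence and fix $\alpha$. Corollary~\ref{corollary4.1} furnishes $\lambda_{\alpha}\geq 0$ with $\mu(y_{\alpha},\lambda_{\alpha}|x|)>1/2$; choosing an integer $n\geq\lambda_{\alpha}$ and combining $\mu(\lambda_{\alpha}|x|,n|x|)>1/2$ with transitivity gives $\mu(y_{\alpha},n|x|)>1/2$. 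Since also $\mu(y_{\alpha},|y|)>1/2$, the element $y_{\alpha}$ is a common lower bound of $|y|$ and $n|x|$, so $\mu(y_{\alpha},|y|\wedge(n|x|))>1/2$ by the defining property of the meet, whence $\mu(y_{\alpha},u)>1/2$. Thus $u$ bounds $\{y_{\alpha}\}$ above, and $|y|=\sup_{\alpha}y_{\alpha}$ forces $\mu(|y|,u)>1/2$. Hence $|y|=\sup_n(|y|\wedge(n|x|))$ and, the sequence being increasing, $|y|\wedge(n|x|)\uparrow|y|$, placing $y$ in the right-hand side.

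I expect the reverse inclusion to be the crux. The approximating net $\{y_{\alpha}\}$ for $B_x$ lives in $I_x^+$ and is controlled only by arbitrary positive scalars $\lambda_{\alpha}$, so the work is to absorb each such scalar into an integer multiple $n|x|$ and then to squeeze the supremum of the canonical sequence $\{|y|\wedge(n|x|)\}_n$ up to $|y|$ by comparison against $\{y_{\alpha}\}$. Note that this argument simultaneously establishes existence of the supremum (rather than presupposing it), since it verifies the least-upper-bound property directly; the forward inclusion is then routine.
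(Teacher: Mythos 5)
Your proof is correct and follows essentially the same route as the paper's: both reduce to Theorem~\ref{theorem5.2} with $D=\{x\}$, bound each element of the approximating net in $I_x^+$ by an integer multiple of $|x|$ via Corollary~\ref{corollary4.1} (absorbing the scalar $\lambda$ into an integer $n\geq\lambda$), and then squeeze the sequence $\{|y|\wedge(n|x|)\}_{n}$ up to $|y|$ by comparison against that net. If anything you are more thorough than the paper, which only writes out the inclusion $B_x\subseteq\{y\in X\mid |y|\wedge(n|x|)\uparrow|y|\}$ and leaves the easy converse inclusion (your first paragraph) implicit.
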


\begin{proof}
Let $y\in D_x$ and $I_x$ be the principal fuzzy ideal generated by $x$. Then Theorem \ref{theorem5.2} shows that there exists a net $\{y_{\alpha}\}_{\alpha\in A} \subset I_x$ such that $y_{\alpha}\uparrow y$. It follows from Theorem \ref{theorem4.2} that for each $\alpha\in A$ there exists a positive integer $n$ such that $\mu(y_{\alpha}, n|x|)>1/2$. Since $y=\sup\{y_{\alpha}\}$, we have $\mu(y_{\alpha}, y)>1/2$ for all $\alpha\in A$. Thus, $\mu(y_{\alpha}, y\wedge n|x|)>1/2$ for all $\alpha\in A$ and $\mu(y\wedge n|x|, |y|)>1/2$. In view of the fact that $y_{\alpha}\uparrow y$, we conclude that $y\wedge n|x|\uparrow |y|$. This completes the proof.
\end{proof}

The next theorem shows that a disjoint complement in a fuzzy Riesz space is always a fuzzy band.

\begin{theorem}\label{theorem5.3}
If $A$ is a subset of a fuzzy Riesz space $X$. Then $A^d$ is a fuzzy band in $X$.
\end{theorem}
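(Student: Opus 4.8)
The plan is to exploit the fact that most of the work is already done. By Theorem \ref{theorem4.4} (v) we know that $A^d$ is a fuzzy ideal of $X$, so to prove it is a fuzzy band it remains only to verify that $A^d$ is fuzzy order-closed. Rather than work directly with the definition of fuzzy order convergence, I would invoke the clean characterization of bands among ideals supplied by Theorem \ref{theorem5.0} (ii): a fuzzy ideal is a fuzzy band precisely when $x_{\alpha}\uparrow x$ implies $x\in A^d$ for every increasing net $\{x_{\alpha}\}$ lying in the positive cone $(A^d)^+$. This reduction to increasing \emph{positive} nets is exactly what lets the disjointness machinery apply, and it is the one conceptual step worth isolating.

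First I would fix an arbitrary increasing net $\{x_{\alpha}\}\subset (A^d)^+$ with $x_{\alpha}\uparrow x$; by definition this means $x=\sup\{x_{\alpha}\}=\vee_{\alpha}x_{\alpha}$. The goal is to show $x\in A^d$, i.e.\ that $x\bot y$ for every $y\in A$. Since each $x_{\alpha}$ belongs to $A^d$, we have $x_{\alpha}\bot y$ for all $y\in A$ and all indices $\alpha$. Now I would fix $y\in A$ and apply Theorem \ref{theorem2.3.5} (ii) with the family $\{x_{\alpha}\}$: because $x=\vee_{\alpha}x_{\alpha}$ and $y$ is disjoint from every member $x_{\alpha}$, that theorem yields $y\bot x$. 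As $y\in A$ was arbitrary, this gives $x\bot y$ for all $y\in A$, which is exactly the statement $x\in A^d$.

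Combining the two observations, $A^d$ is a fuzzy ideal (Theorem \ref{theorem4.4} (v)) that satisfies the increasing-net criterion of Theorem \ref{theorem5.0} (ii), and therefore $A^d$ is a fuzzy band of $X$. I do not expect any genuine obstacle here, since Theorem \ref{theorem2.3.5} (ii) already packages the essential content ``disjoint from every term implies disjoint from the supremum.'' The only point requiring a little care is making sure the hypotheses of Theorem \ref{theorem5.0} (ii) are met: the net must be taken in the positive cone $(A^d)^+$ and must be genuinely increasing, so that $x=\sup\{x_{\alpha}\}=\vee_{\alpha}x_{\alpha}$ and the positivity is compatible with the use of $|x_\alpha|=x_\alpha$ implicit in the disjointness relation. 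Once that bookkeeping is in place, the argument closes immediately.
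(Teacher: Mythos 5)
Your proof is correct and takes essentially the same route as the paper: the paper's own proof is a one-line citation of Theorem \ref{theorem4.4} (v) and Theorem \ref{theorem2.3.5} (ii), and your argument simply spells out the details, with Theorem \ref{theorem5.0} (ii) serving as the bridge between fuzzy order-closedness and closure under suprema of increasing positive nets. Nothing is missing; the bookkeeping about positivity and increasingness that you flag is exactly the reduction the paper leaves implicit.
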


\begin{proof}
The theorem follows from Theorem \ref{theorem4.4} (v) and Theorem \ref{theorem2.3.5} (ii).
\end{proof}

Theorem \ref{theorem4.3} says that the sum of two fuzzy ideals is a fuzzy ideal. However, the sum of two fuzzy
bands need not be a fuzzy band as the next example shows.\\

\noindent \textbf{Example 5.3.} Let $a$ be a fixed positive number. Consider $X=C([-a, a])$ the set of all continuous functions on $[-a, a]$ with coordinate algebraic operations. Define a membership function $\mu: X\times X\rightarrow [0, 1]$ by
\begin{equation*}
\mu(f, g)=\left\{
            \begin{array}{ll}
              1, & \hbox{if $f\equiv g$;} \\
              2/3, & \hbox{if $f(t)\leq g(t)$ for all $t\in [-a, a]$ and $f\neq g$;} \\
              0, & \hbox{otherwise.}
            \end{array}
          \right.
\end{equation*}
Then $X$ is a fuzzy Riesz space. Let $B_1=\{f\in X\mid f(t)=0 \text{ for all $t\in [0, a]$}\}$ and
$B_1=\{f\in X\mid f(t)=0, \text{ for all $t\in [-a, 0]$}\}$. We claim that $B_1$ and $B_2$ are fuzzy bands in $X$. To see this, let $\{f_{\alpha}\}\subset  B_1$ such that $\{f_{\alpha}\}\uparrow f$. In view of Theorem \ref{theorem5.0}, we need to show that $f\in B_1$. Suppose not. Then there exists $b\in [0, a]$ such that $f(b)\neq 0$. Without loss of generality, we may assume $b\neq 0$ and $f(b)>0$. By the continuity of $f$, there exists $\epsilon>0$ such that $f(t)\neq 0$ for all $t\in [b-\epsilon, b+\epsilon]\subset [0, a]$. Let $m=\max_{t\in [b-\epsilon, b+\epsilon]}f(t)$ and take a number $c$ such that $0<c<\min\{m, f(b)\}$. Then define a function $g_1\equiv c$ on $[b-\epsilon, b+\epsilon]$ and extend it continuously to a nonnegative function on $[0, a]$ using the Tietze Extension Theorem. Next, define a function $g$ on $[-a, a]$ by
\begin{equation*}
g(t)=\left\{
       \begin{array}{ll}
         g_1(t), & \hbox{if $t\in [0, a]$;} \\
         f(t), & \hbox{otherwise.}
       \end{array}
     \right.
\end{equation*}
Then $g\in B_1$ and $\mu(g, f)>1/2$, showing that $g\not\in U(f)$. It is obvious that
$\mu(f_{\alpha}, g)>1/2$ for all $\alpha$, that is, $g\in U(\{f_{\alpha}\})$. But $g$ is strictly less than $f$ on the interval $[b-\epsilon, b+\epsilon]$; this means $g\not \in U(f)$, contradicting $f=\sup\{f_{\alpha}\}$. It follows by contraposition that $f\in B_1$. Therefore, $B_1$ is a fuzzy band of $X$.
Similarly, we can show that $B_2$ is a fuzzy band of $X$.

Evidently, $B_1\cap B_2=\{0\}$; hence Theorem \ref{theorem4.3} shows that $B_1+B_2=B_1\oplus B_2=\{f\in X\mid f(0)=0\}$ is a fuzzy ideal of $X$. However, $B_1+B_2$ is not a fuzzy band. To see this, consider a sequence of function $\{f_n\}$ in $X$ defined by
\begin{equation*}
f_n(t)=\left\{
         \begin{array}{ll}
           1, & \hbox{if $1/n\leq t\leq a$;} \\
           nt, & \hbox{if $0\leq t<1/n$;} \\
           -nt, & \hbox{if $-1/n< t\leq 0$;} \\
           1, & \hbox{if $-a\leq t\leq -1/n$.}
         \end{array}
       \right.
\end{equation*}
Then $\{f_n\}\subset B_1+B_2$. Let $f\equiv 1$. It is clear that $f\in U(\{f_n\})$. Let $h\in U(\{f_n\})$. Then $\mu(f_n, h)>1/2$ for each $n$. Hence, $g(t)\geq 1$ for all $t\in [-a, -1/n]\cup [1/n, a]$ for each $n$, implying that $g(t)\geq f(t)\equiv 1$ for all $t\in [-a, a]$. Thus, $\mu(f, g)>1/2$, implying that $f\in U(g)$. This shows that $f=\sup\{f_n\}$. But $f\not\in B_1+B_2$. Therefore, $B_1+B_2$ is not a fuzzy band of $X$.

\begin{theorem}\label{theorem5.4}
Let $B_1$ and $B_2$ be two fuzzy ideals of a fuzzy Riesz space $X$. If $X=B_1\oplus B_2$, then $B_1$ and $B_2$ are fuzzy bands satisfying $B_1=B_2^d$ and $B_2=B_1^d$. In this case, we have $B_1=B_1^{dd}$ and $B_2=B_2^{dd}$.
\end{theorem}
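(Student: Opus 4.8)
The plan is to adapt the classical argument that the summands of an order direct sum are mutually disjoint bands. The crucial first step is to prove orthogonality, $B_1 \bot B_2$. Given $a \in B_1$ and $b \in B_2$, fuzzy solidness of the ideals gives $|a| \in B_1^+$ and $|b| \in B_2^+$ (condition (1) in Remark 2 following Definition \ref{definition4.1}). Applying the ideal characterization in Remark 3 following Definition \ref{definition4.1} twice --- once with $I = B_1$ and once with $I = B_2$, using that $|a|, |b| \in X^+$ --- shows that $|a| \wedge |b|$ lies in both $B_1$ and $B_2$. Since $X = B_1 \oplus B_2$ forces $B_1 \cap B_2 = \{0\}$, we obtain $|a| \wedge |b| = 0$, i.e. $a \bot b$. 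Hence $B_1 \bot B_2$, which gives the inclusions $B_1 \subseteq B_2^d$ and $B_2 \subseteq B_1^d$.

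The second step establishes the reverse inclusion $B_2^d \subseteq B_1$. For $x \in B_2^d$, I would use the direct-sum decomposition to write $x = x_1 + x_2$ with $x_1 \in B_1 \subseteq B_2^d$ and $x_2 \in B_2$. Because $B_2^d$ is a vector subspace (Theorem \ref{theorem4.4}(v)), it follows that $x_2 = x - x_1 \in B_2^d$; but then $x_2 \in B_2 \cap B_2^d$, and an element disjoint from itself satisfies $|x_2| \wedge |x_2| = |x_2| = 0$ by the idempotent law (Theorem \ref{theorem2.1.3}(i)), so $x_2 = 0$ and $x = x_1 \in B_1$. This yields $B_2^d \subseteq B_1$, which combined with Step 1 gives $B_1 = B_2^d$; the symmetric argument gives $B_2 = B_1^d$.

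The remaining assertions then follow formally. Since $B_1 = B_2^d$ and $B_2 = B_1^d$ are disjoint complements, Theorem \ref{theorem5.3} shows that each is a fuzzy band. Finally, applying $(\cdot)^d$ to the identity $B_1^d = B_2$ yields $B_1^{dd} = B_2^d = B_1$, and likewise $B_2^{dd} = B_1^d = B_2$.

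I expect the main obstacle to be the first step: one must correctly marshal fuzzy solidness together with the two-sided ideal condition to deposit $|a| \wedge |b|$ simultaneously into $B_1$ and $B_2$, and verify that the hypothesis $X = B_1 \oplus B_2$ indeed supplies $B_1 \cap B_2 = \{0\}$ (as clarified by the remark following Theorem \ref{theorem4.4}, since $B_1$ and $B_2$ turn out to be disjoint subsets). Once disjointness is secured, Steps 2--4 are routine applications of the disjoint-complement calculus of Section 4 and Theorem \ref{theorem5.3}.
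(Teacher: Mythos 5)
Your proof is correct, and its overall skeleton --- orthogonality of the two summands first, then the reverse inclusion via the direct-sum decomposition, then the band and double-complement claims via Theorem \ref{theorem5.3} and the identity $A^d=A^{ddd}$ --- is the same as the paper's. Where you genuinely diverge is the middle step. To prove the inclusion of the disjoint complement back into the other summand, the paper takes a \emph{positive} $x\in B_1^d$, splits it as $x=x_1+x_2$ with positive $x_1\in B_1$, $x_2\in B_2$, notes $\mu(x_1,x)>1/2$, and uses fuzzy solidness of $B_1^d$ to force $x_1\in B_1\cap B_1^d=\{0\}$; extending from positive to arbitrary elements then implicitly needs one more pass through solidness via $|x|$. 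Your argument is purely linear: for an arbitrary $x\in B_2^d$, decompose $x=x_1+x_2$, use $B_1\subseteq B_2^d$ (from your first step) and the fact that $B_2^d$ is a vector subspace (Theorem \ref{theorem4.4}(v)) to conclude $x_2=x-x_1\in B_2\cap B_2^d=\{0\}$. This buys you something real: no positivity restriction, no appeal to the positive Riesz-type decomposition, no solidness of the complement, and all elements are handled at once; it also sidesteps an infelicity in the paper's write-up, which at the corresponding point asserts $x_1\in B_1^d\subset B_2$ --- the very inclusion being proved --- where $x_1\in B_1\cap B_1^d=\{0\}$ is what is actually meant. One small caution on your side: the closing parenthetical that justifies $B_1\cap B_2=\{0\}$ by the remark following Theorem \ref{theorem4.4} is circular, since that remark presupposes the orthogonality you are in the middle of establishing; but no such appeal is needed, because trivial intersection of the two subspaces is simply what the hypothesis $X=B_1\oplus B_2$ means for vector subspaces, and this is exactly how the paper's own proof reads it.
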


\begin{proof}
Take $x\in B_1$ and $y\in B_2$. Since $\mu(|x|\wedge |y|, |x|)>1/2$ and $\mu(|x|\wedge |y|, |y|)>1/2$, the fuzzy solidness of $B_1$ and $B_2$ implies $|x|\wedge |y|\in B_1\cap B_2=\{0\}$. Therefore, $B_1\bot B_2$ and $B_2\subset B_1^d$.

On the other hand, take $x\in B_1^d$ such that $\mu(0, x)>1/2$. The hypothesis implies the existence of two positive elements $x_1\in B_1$ and $x_2\in B_2$ such that $x=x_1+x_2$. Since $\mu(0, x_2)>1/2$ $\mu(x_1, x_1)=1>1/2$, we have $\mu(x_1, x)>1/2$. Since $B_1^d$ is a fuzzy ideal, $x_1\in B_1^d\subset B_2$. Hence, $x_1\in B_1\cap B_2=\{0\}$. It follows that $x=x_2\in B_2$, showing that $B_1^d\subset B_2$. Therefore, $B_2=B_1^d$. By symmetry, we also have $B_2^d=B_1$.

The second statement follows from Theorem \ref{theorem4.4}.
\end{proof}

\begin{lemma}\label{lemma5.0}
Let $D$ be a nonempty subset of a fuzzy Riesz space $X$. Then $D^d=I_D^d=B_D^d$, where $I_D$ and $B_D$ are
the fuzzy ideal and fuzzy band generated by $D$, respectively.
\end{lemma}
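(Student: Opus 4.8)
The plan is to sidestep the explicit descriptions of $I_D$ and $B_D$ from Theorem \ref{theorem4.2} and Theorem \ref{theorem5.2} altogether, and instead exploit the fact that the disjoint complement of a single element is simultaneously a fuzzy ideal and a fuzzy band. Since disjointness is symmetric (the infimum is commutative by Theorem \ref{theorem2.1.3}), the membership $y\in D^d$ is equivalent to the inclusion $D\subset \{y\}^d$, and it is this reformulation that drives the whole argument.

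First I would record the three ``easy'' inclusions coming from monotonicity. By the definitions of the generated ideal and generated band we have $D\subset I_D$ and $D\subset B_D$, and the remark following Definition \ref{definition4.4} (antitonicity of the disjoint complement) then yields $I_D^d\subset D^d$ and $B_D^d\subset D^d$ at once.

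Next I would prove the reverse inclusions $D^d\subset I_D^d$ and $D^d\subset B_D^d$. Fix $y\in D^d$; by symmetry of disjointness this says that every $x\in D$ is disjoint from $y$, i.e.\ $D\subset \{y\}^d$. Now $\{y\}^d$ is a fuzzy ideal by Theorem \ref{theorem4.4}(v) and, being a disjoint complement, is in fact a fuzzy band by Theorem \ref{theorem5.3}. Since $I_D$ is the smallest fuzzy ideal containing $D$ and $B_D$ is the smallest fuzzy band containing $D$, the inclusion $D\subset \{y\}^d$ forces $I_D\subset \{y\}^d$ and $B_D\subset \{y\}^d$. Consequently every element of $I_D$ (resp.\ $B_D$) is disjoint from $y$, which is precisely $y\in I_D^d$ (resp.\ $y\in B_D^d$).

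Combining the two directions gives $D^d\subset I_D^d\subset D^d$ and $D^d\subset B_D^d\subset D^d$, whence $D^d=I_D^d=B_D^d$. I do not anticipate a genuine obstacle here; the only point needing care is the observation that $\{y\}^d$ qualifies as a \emph{band} and not merely as an ideal, since it is this stronger property---together with the minimality built into $B_D$---that collapses the band-generated complement onto the others without invoking any net-theoretic description of $B_D$.
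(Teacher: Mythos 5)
Your proof is correct and takes essentially the same route as the paper's: both rest on the observation that the disjoint complement of a single element is a fuzzy band containing $D$ (your citation of Theorem \ref{theorem4.4}(v) and Theorem \ref{theorem5.3}, which the paper inlines as citations of Theorem \ref{theorem2.3.5} and Theorem \ref{theorem5.0}), so that it must contain the generated ideal and band. If anything, your write-up is more explicit than the paper's, which leaves the minimality step implicit and disposes of $I_D$ by the sandwich $B_D^d\subset I_D^d\subset D^d$ rather than by a direct argument.
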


\begin{proof}
It suffices to show that $D^d=B_D^d$. Since $D\subset B_D$, we have $B_D^d\subset D^d$. For the converse, take $x\in D^d$. Then $x\bot y$ for all $y\in D$. By Theorem \ref{theorem2.3.5} and Theorem \ref{theorem5.0}, we have $x\in B_D^d$, implying that $D^d\subset B_D^d$. Therefore, $D^d=B_D^d$.
\end{proof}

\begin{theorem}\label{theorem5.5}
Let $X$ be a fuzzy Riesz space. Then $X$ is fuzzy Archimedean if and only if $B=B^{dd}$ for all fuzzy band $B$ of $X$. \end{theorem}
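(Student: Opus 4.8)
The plan is to prove the two implications separately, using throughout the identification of the order relation ``$a\le b$'' with $\mu(a,b)>1/2$ from Theorem~\ref{theorem2.1.3}(iv).

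For the forward implication, assume $X$ is fuzzy Archimedean and let $B$ be a fuzzy band. Since $B\subseteq B^{dd}$ always holds by Theorem~\ref{theorem4.4}(i), it suffices to prove $B^{dd}\subseteq B$, and by fuzzy solidness of both $B$ and $B^{dd}$ it is enough to treat a nonnegative $x\in B^{dd}$. I would consider the net $\{x\wedge y\}_{y\in B^{+}}$, directed upward by $y$; each $x\wedge y$ lies in $B^{+}$ because $x\wedge y\le y\in B$ and $B$ is fuzzy solid. In view of Theorem~\ref{theorem5.0}(ii), it then suffices to show $x\wedge y\uparrow x$, that is, $\sup_{y\in B^{+}}(x\wedge y)=x$.

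To prove this supremum equals $x$, I would take any upper bound and, replacing it by its meet with $x$, assume it has the form $x-w$ with $0\le w\le x$; the goal becomes $w=0$. For a fixed $y\in B^{+}$ and every $n$ one has $ny\in B^{+}$, hence $x\wedge ny\le x-w$, which rearranges to $w\le x-(x\wedge ny)=(x-ny)^{+}$. The central inequality is
\[
n\big[(x-ny)^{+}\wedge y\big]\le x,
\]
which I would verify by a short case analysis comparing $x$ with $ny$ and $x-ny$ with $y$. Consequently $w\wedge y\le (x-ny)^{+}\wedge y\le \tfrac1n x$ for every $n$, so $w\wedge y$ is a lower bound of $\{\tfrac1n x\}$, and fuzzy Archimedean-ness through Theorem~\ref{theorem2.3.6} forces $w\wedge y=0$. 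As $y\in B^{+}$ was arbitrary and $B^{d}=I_{B}^{d}$ by Lemma~\ref{lemma5.0}, this gives $w\in B^{d}$; but $0\le w\le x\in B^{dd}$ with $B^{dd}$ fuzzy solid, so $w\in B^{d}\cap B^{dd}=\{0\}$ by Theorem~\ref{theorem4.4}(iii). Hence $w=0$, every upper bound dominates $x$, the supremum is $x$, and Theorem~\ref{theorem5.0}(ii) yields $x\in B$.

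For the converse I would argue the contrapositive: assuming $X$ is not fuzzy Archimedean, I must produce a fuzzy band $B$ with $B\neq B^{dd}$. By Theorem~\ref{theorem2.3.7} there is a positive $x$ and an element $u$ with $\mu(nx,u)>1/2$ for all $n$. A clean first observation is that $\sup_{n}nx$ cannot exist: if it equalled some $s$, then scaling by $2$ via Theorem~\ref{theorem2.3.2}(i) gives $\sup_{n}(2nx)=2s$, while $\{2nx\}$ is cofinal in $\{nx\}$ and so has supremum $s$, forcing $2s=s$ and hence $s=0$, contradicting $\mu(0,x)>1/2$. I would take $B:=B_{x}$, so that $B^{dd}=\{x\}^{dd}$ by Lemma~\ref{lemma5.0}. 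By Corollary~\ref{corollary5.1}, any element of $B_{x}$ dominating all $nx$ would force $nx\uparrow$ that element, i.e.\ would realise $\sup_{n}nx$; since that supremum does not exist, \emph{no} upper bound of $\{nx\}$ can lie in $B_{x}$. It therefore remains only to exhibit one upper bound of $\{nx\}$ lying in $\{x\}^{dd}=B^{dd}$, for such an element immediately witnesses $B\neq B^{dd}$. The hard part will be precisely this localisation step: securing an upper bound of $\{nx\}$ inside the band $\{x\}^{dd}$. The natural candidate is the order-theoretic $\{x\}^{dd}$-part of $u$, namely $c:=\sup\{\,z\in\{x\}^{dd}:0\le z\le u\,\}$; the defining set is directed upward and contained in the order-closed band $\{x\}^{dd}$, so \emph{if} this supremum exists it lies in $\{x\}^{dd}$ and dominates every $nx$, finishing the proof. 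I expect the main obstacle to be justifying the existence of this supremum without assuming Dedekind completeness; I would approach it by showing that subtracting any overlap $u\wedge|z|$ of $u$ with $\{x\}^{d}$ only shrinks $u$ while preserving the bound $nx\le u-(u\wedge|z|)$, using that disjoint positive elements satisfy $a\vee b=a+b$ via Theorem~\ref{theorem2.3*} together with the sum rule for suprema in Theorem~\ref{theorem2.3.2*}. This extraction of a witness in $\{x\}^{dd}\smallsetminus B_{x}$ is the crux on which the converse turns.
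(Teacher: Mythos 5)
Your forward implication is correct, and it takes a genuinely different route from the paper's. The paper also realises a positive $x\in B^{dd}$ as the supremum of the part of $B^{+}$ lying below $x$, but it rules out a strictly smaller upper bound $z$ by an induction that manufactures $v\in B^{+}$ with $0\neq nv$ and $\mu(nv,x)>1/2$ for all $n$, contradicting Theorem \ref{theorem2.3.7}; you instead run the classical argument through the inequality $n[(x-ny)^{+}\wedge y]\le x$ and the infimum characterisation of Theorem \ref{theorem2.3.6}. Both engines work, and yours is the standard textbook proof transplanted to the fuzzy setting. One caveat: you cannot verify the central inequality ``by a case analysis comparing $x$ with $ny$'', because in a fuzzy Riesz space these elements need not be comparable. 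A lattice computation is required, and it is available with the paper's toolkit: putting $c=(x-ny)^{+}\wedge y$, one has $nc-x\le n(x-ny)^{+}-x$ and $nc-x\le ny-x$, hence $(nc-x)^{+}\le\bigl[n(x-ny)^{+}-x\bigr]^{+}\wedge(ny-x)^{+}\le\bigl[(n-1)(x-ny)^{+}\bigr]\wedge(x-ny)^{-}=0$, using $(x-ny)^{+}\le x$, the identity $u^{+}\wedge u^{-}=0$, and Theorem \ref{theorem2.3.5}(i); therefore $nc\le x$. With that step made rigorous, your forward half is sound.

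The converse breaks at exactly the step you call the crux, and the problem is worse than a missing existence proof: the statement you reduce to is false, and $B_{x}$ is the wrong band to use. Consider $X=R^{3}$ with $(a,b,d)\geq 0$ iff $a>0$, or $a=0$ and $b\geq 0,\ d\geq 0$, made into a fuzzy Riesz space by the paper's usual device ($\mu(u,v)=1$ if $u=v$, $2/3$ if $u\le v$ and $u\neq v$, $0$ otherwise). Take $x=(0,1,0)$; then $nx=(0,n,0)\le(1,0,0)$ for all $n$, so $X$ is not fuzzy Archimedean. Here $\{x\}^{d}=\{(0,0,d)\}$ and $\{x\}^{dd}=\{(0,b,0)\}=I_{x}$; since disjoint complements are fuzzy bands (Theorem \ref{theorem5.3}), this forces $B_{x}=\{x\}^{dd}$, i.e.\ $B_{x}=B_{x}^{dd}$: your candidate band satisfies the very identity you are trying to refute. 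Consistently with your (correct) observation that no upper bound of $\{nx\}$ can lie in $B_{x}$, the upper bounds of $\{nx\}$ are exactly the elements with first coordinate $>0$, and none of them lies in $\{x\}^{dd}$; so neither your supremum $c$ (which indeed fails to exist here) nor the subtraction scheme can ever produce the witness you want. The theorem is still true in this example, but the witness is a different band: $J=\{(0,b,d)\}$ is an ideal, it is order closed (any subset of $J$ possessing a supremum in $X$ has that supremum with first coordinate $0$), and $J^{d}=\{0\}$, so $J^{dd}=X\neq J$.

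That is precisely the band the paper's proof constructs in general: it works not with $B_{x}$ but with $I=I_{x}\oplus I_{x}^{d}$ and the band $B_{I}$ it generates. One always has $I^{d}=I_{x}^{d}\cap I_{x}^{dd}=\{0\}$, so Lemma \ref{lemma5.0} gives $B_{I}^{dd}=X$; the hypothesis that every fuzzy band equals its double disjoint complement then forces $B_{I}=X$, whence $y\in B_{I}$ and $y=\sup D_{y}$ with $D_{y}=\{z\in I^{+}\mid \mu(z,y)>1/2\}$. The decomposition $z=z_{1}+z_{2}$ with $z_{1}\in I_{x}^{+}$, $z_{2}\in(I_{x}^{d})^{+}$, together with the disjointness $(z_{1}+x)\perp z_{2}$, shows that $y-x$ is again an upper bound of $D_{y}$, contradicting $y=\sup D_{y}$. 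To repair your converse you must make the analogous move --- build the counterexample band from $I_{x}\oplus I_{x}^{d}$ (equivalently, derive a contradiction from the hypothesis applied to $B_{I}$) --- rather than from the principal band of $x$, where the strategy is unsalvageable.
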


\begin{proof}
Assume that $X$ is a fuzzy Archimedean Riesz space and $B$ is a fuzzy band of $X$. By Theorem \ref{theorem4.4}, we have $B\subset B^{dd}$. Thus, to show that $B=B^{dd}$, it suffices to show that $B^{dd}\subset B$. To this end, take $x\in B^{dd}$ and put
\begin{equation*}
D_x=\{y\in B^+\mid y\neq 0, y\neq x, \mu(y, x)>1/2\}.
\end{equation*}
Clearly, $D_x\neq \phi$, $D_x\uparrow$ and $x\in U(D_x)$. We show that $D_x\uparrow x$, that is, $x=\sup D_x$. Assume $x\neq \sup D_x$. Then there exists some $z\in X^+$ such that $z\in U(D_x)$ but $z\not \in U(x)$, i.e.,
$\mu(y, z)>1/2$ for all $y\in D_x$ and $\mu(z, x)>1/2$. Since $x\neq z$, $x-z\in B^{dd}$ and $B^d\cap B^{dd}=\{0\}$, we have $x-z\not \in B^{d}$. This implies that there exists some $w\in B^+$ such that
$v=w\wedge (x-z)\neq 0$. As $\mu(v, w)>1/2$ and $w\in B$, the solidness of $B$ implies $v\in B$. In view of $\mu(v, x-z)>1/2$ and $\mu(x-z, x)>1/2$, we have $\mu(v, x)>1/2$. Evidently, $0\neq v\in B^+$; hence $v\in D_x$. Thus, $\mu(v, z)>1/2$. It follows that $0\neq 2v\in D_x$ and $\mu(2v, x)=\mu(v+v, (x-z)+z)>1/2$. By induction on $n$, we have
$0\neq nv\in D_x$ and $\mu(nv, x)>1/2$, that is, the sequence $\{nv\}$ is bounded above, contradicting Theorem \ref{theorem2.3.7}. This proves that $x=\sup D_x$. As $D_x\subset B$ and $B$ is a fuzzy band, we have $x\in B$. Hence, $B^{dd}\subset B$.

Conversely, we assume that $B=B^{dd}$. Suppose $X$ is not fuzzy Archimedean. Then there exists nonnegative elements $x, y\in X$ such that $y\in U(\{nx\}_{n\in N})$. Let $I_x$ be the fuzzy ideal generated by $x$ in $X$ and put $I=I_x\oplus I_x^d$. If $z\bot I$, then $z\bot I_x$ and $z\bot I_x^d$, showing that $z\in I_x\cap I_x^d=\{0\}$. Thus, $I^d=\{0\}$, implying $X=I^{dd}$. It follows from Lemma \ref{lemma5.0} and the hypothesis that $X=B_I$, where $B_I$ is the fuzzy band generated by $I$ in $X$. Thus, $y\in B_I$ and $y=\sup D_y$, where
\begin{equation*}
D_y=\{z\mid z\in I^+, \mu(z, y)>1/2\}.
\end{equation*}
Next, let $z\in D_y$. Then Theorem \ref{theorem4.3} shows that $z=z_1+z_2$, where $z_1\in I^+_x$, $z_2\in (I_x^d)^+$, $\mu(z_1, z)>1/2$ and $\mu(z_2, z)>1/2$. In view of Theorem \ref{theorem4.2}, there exists some $k\in N$ such that $\mu(z_1, kx)>1/2$. Hence, $\mu(z_1+x, (k+1)x)>1/2$, showing that $z_1+x\in I_x$.

Therefore, $(z_1+x)\bot z_2$. Moreover, we have $\mu(z_2, z)>1/2$ and $\mu(z, y)>1/2$, implying $\mu(z_2, y)>1/2$. By Theorem \ref{theorem2.3*}, we have
$\mu(z+x, y)=\mu((z_1+x)\vee z_2, y)>1/2$, or equivalently, $\mu(z, y-x)>1/2$ for all $z\in D_y$. Thus, $y-x\in U(x)$. On the other hand, $x$ is a nonnegative element; hence $\mu(y-x, y)>1/2$, i.e., $y\in U(y-x)$. This contradicts the fact that $y=\sup D_y$. By way of contraposition, we conclude that $X$ must be fuzzy Archimedean.
\end{proof}

\begin{definition}\label{definition5.5}
\emph{Let $X$ be a fuzzy Riesz space.
\begin{enumerate}
  \item [(i)]$X$ is said to be \emph{fuzzy Dedekind complete} or \emph{fuzzy order complete} if every nonempty subset of $X$ that is bounded  above has a supremum. In this case, we also say $X$ is a \emph{fuzzy Dedekind complete Riesz space}.
  \item [(ii)]$X$ is said to be \emph{fuzzy Dedekind $\sigma$-complete} if every nonempty countable subset of $X$ that is bounded above or bounded  below has a supremum or infimum, respectively. In this case, we also say $X$ is a \emph{fuzzy $\sigma$-Dedekind complete Riesz space}.
\end{enumerate}}
\end{definition}

\begin{lemma}\label{lemma5.1}
If $X$ is a fuzzy Dedekind complete Riesz space, then $X$ is fuzzy Archimedean.
\end{lemma}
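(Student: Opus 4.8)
The plan is to transport the classical Archimedean argument into the fuzzy order framework and run it by contradiction. Suppose $X$ is fuzzy Dedekind complete but fails to be fuzzy Archimedean. By Theorem~\ref{theorem2.3.7}, there is then a nonnegative element $x\in X$ for which the sequence $\{nx\}_{n\in N}$ is bounded above. This set is nonempty, so fuzzy Dedekind completeness (Definition~\ref{definition5.5}(i)) guarantees that $u=\sup\{nx\mid n\in N\}$ exists in $X$. The goal is to squeeze a contradiction out of the fact that $u$ must simultaneously be the least upper bound and admit $u-x$ as a strictly ``smaller'' upper bound.

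The heart of the argument is to show that $u-x$ is again an upper bound of $\{nx\}$. Since $u$ is an upper bound, $\mu((n+1)x, u)>1/2$ for every $n$; applying the translation-compatibility axiom Definition~\ref{definition2.3.1}(i) with $z=-x$ yields $\mu(nx, u-x)\ge\mu((n+1)x, u)>1/2$. Hence $\mu(nx, u-x)>1/2$ for all $n$, that is, $u-x\in U(\{nx\})$. This is exactly the fuzzy replacement for the classical cancellation ``$(n+1)x\le u \Rightarrow nx\le u-x$''.

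Now I invoke the defining property of the supremum: since $u-x$ is an upper bound and $u=\sup\{nx\}$, condition (ii) in the definition of supremum gives $\mu(u, u-x)>1/2$. Translating this by $z=-u$ (again Definition~\ref{definition2.3.1}(i)) produces $\mu(0,-x)\ge\mu(u,u-x)>1/2$, and a further translation by $z=x$ (equivalently Theorem~\ref{theorem2.3.1}(iv) with $\alpha=-1$) gives $\mu(x,0)>1/2$. Thus $x$ is negative in the sense of Definition~\ref{definition2.3.2}, contradicting the choice of $x$ as a nonnegative element. By contraposition $X$ is fuzzy Archimedean.

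Rather than a genuine obstacle, the care-points here are bookkeeping. One must quote the correct characterization of the fuzzy Archimedean property (Theorem~\ref{theorem2.3.7}) so that non-Archimedean-ness delivers a \emph{bounded} sequence $\{nx\}$, and one must read $\mu(a,b)>1/2$ consistently as the order relation so that ``upper bound'' and ``supremum'' are applied correctly. The single thing to verify at each step is that the hypothesis $\mu(\cdot,\cdot)>1/2$ of the translation axiom genuinely holds before invoking it; since every inequality we translate is already strict above $1/2$, this is immediate, and the proof stays short.
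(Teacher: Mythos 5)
Your proof is correct, but it follows a genuinely different route from the paper's. The paper also reduces to Theorem \ref{theorem2.3.7} and uses Dedekind completeness to form $x_0=\sup\{nx\}_{n\in N}$, but then it \emph{doubles}: by Theorem \ref{theorem2.3.2}, $2x_0=\sup\{2nx\}$; an interleaving argument (from $\mu(nx,2nx)>1/2$ and $\mu(2nx,(2n+1)x)>1/2$) gives $\sup\{nx\}=\sup\{2nx\}$, hence $x_0=2x_0=0$, and then $x=0$, the desired contradiction. You never touch scalar multiples of the supremum: you show $u-x$ is again an upper bound via the translation axiom, apply clause (ii) of the definition of supremum to get $\mu(u,u-x)>1/2$, and translate back to conclude that $x$ is negative. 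This buys you something real: the paper's interleaving step needs comparisons such as $\mu(nx,2nx)>1/2$, which by Theorem \ref{theorem2.3.1}(v) require $x$ to be \emph{positive} ($\mu(0,x)>1/2$), while Theorem \ref{theorem2.3.7} is stated for \emph{nonnegative} $x$ (i.e., not negative), and in this setting a nonnegative element need not be comparable to $0$; your argument applies to any $x$ whose multiples $\{nx\}$ are bounded above and shows precisely that such an $x$ must be negative, which is exactly the contradiction that Theorem \ref{theorem2.3.7} calls for. In exchange, the paper's route exercises the structural fact that suprema commute with nonnegative scalar multiplication; yours uses only the translation axiom and the raw definition of supremum, so it is the more elementary and, as written, the tighter of the two.
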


\begin{proof}
Let $x\in X^+$. In view of Theorem \ref{theorem2.3.7}, we need to show that the sequence $\{nx\}_{n\in N}$ is not bounded above. To proceed by way of contraposition, we assume that there exists some $y\in X$ such that $\mu(nx, y)>1/2$ for all $n\in N$. Since $X$ is fuzzy Dedekind complete, $x_0=\sup\{nx\}_{n\in N}$ exists; similarly, $2x_0=\sup\{2nx\}$ exists. Since $\mu(nx, 2nx)>1/2$ and $\mu(2nx, (2n+1)x)>1/2$ for all $n\in N$, we see that $\sup\{nx\}_{n\in N}=\sup\{2nx\}_{n\in N}$. Thus, we have $x_0=2x_0$, implying $x_0=0$. This further implies that $x=0$, contradicting the fact that $x$ is an arbitrary element in $X^+$. Thus, the theorem is established.
\end{proof}

\begin{theorem}\label{theorem5.6}
Let $X$ be a fuzzy Riesz space.
\begin{enumerate}
  \item [(i)]$X$ is fuzzy Dedekind complete if and only if every nonempty subset of $X^+$ that is directed to the right and bounded above has a supremum.
  \item [(ii)]$X$ is fuzzy Dedekind $\sigma$-complete if and only if every increasing sequence in $X^+$ that is bounded above has a supremum.
\end{enumerate}
\end{theorem}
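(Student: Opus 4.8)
The plan is to prove both parts in the ``if'' direction only, since the ``only if'' direction is immediate: a fuzzy Dedekind complete space gives suprema for \emph{all} bounded-above subsets, in particular for those that happen to be directed to the right and contained in $X^+$, and similarly for part (ii) every increasing sequence is a countable bounded-above set. For part (i), assume every nonempty subset of $X^+$ that is directed to the right and bounded above has a supremum, and let $A$ be an arbitrary nonempty subset of $X$ bounded above by some $u$. First I would pass to the \emph{directed hull} $D=\{a_{i_1}\vee\cdots\vee a_{i_k}\mid a_{i_j}\in A,\ k\in N\}$, which is well defined because a fuzzy Riesz space is a fuzzy lattice and hence all finite suprema exist. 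Since $A\subseteq D$ (take $k=1$) and every finite supremum is dominated by any common upper bound of its constituents, the fuzzy upper-bound sets of $A$ and of $D$ coincide, so $\sup A$ exists if and only if $\sup D$ does and they agree; moreover $D$ is directed to the right and still bounded above by $u$.

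The second step is to push $D$ into $X^+$. Fixing any $a_0\in A\subseteq D$ and using directedness, the tail $D'=\{d\in D\mid \mu(a_0,d)>1/2\}$ is cofinal in $D$, so $\sup D'=\sup D$ whenever either exists. I would then set $\widetilde D=\{d-a_0\mid d\in D'\}$. Each such element is nonnegative: from $\mu(a_0,d)>1/2$ and the translation invariance in Definition~\ref{definition2.3.1}(i) applied with $z=-a_0$ we get $\mu(0,d-a_0)>1/2$. The set $\widetilde D$ is directed to the right, lies in $X^+$, and is bounded above by $u-a_0$, so the hypothesis yields $s=\sup\widetilde D$. Translating back via Theorem~\ref{theorem2.3.2*} with the two families $\widetilde D$ and the singleton $\{a_0\}$ gives $\sup D'=\sup(\widetilde D+a_0)=s+a_0$, whence $\sup A=\sup D=\sup D'=s+a_0$ exists, establishing (i).

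For part (ii) the directed hull is replaced by the increasing sequence of partial finite suprema. Given a countable $A=\{a_1,a_2,\dots\}$ bounded above, put $b_n=a_1\vee\cdots\vee a_n$; then $\{b_n\}$ is increasing (by absorption and Theorem~\ref{theorem2.1.3}(iv)), bounded above, and has the same upper bounds as $A$. Shifting by $b_1$ produces an increasing sequence $\{b_n-b_1\}$ in $X^+$ that is bounded above, to which the hypothesis applies; translating back as above recovers $\sup A$. The bounded-below case in Definition~\ref{definition5.5}(ii) is handled by negation: if $A$ is bounded below, then $-A$ is bounded above and Theorem~\ref{theorem2.3.2}(ii) with $\lambda=-1$ gives $\inf A=-\sup(-A)$, reducing the existence of infima to the supremum case already treated.

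The routine vector-lattice steps here are classical, so the main obstacle is the fuzzy-order bookkeeping rather than the overall strategy. Specifically, I expect the delicate points to be (a) verifying that the fuzzy upper-bound sets $U(A)$ and $U(D)$ genuinely coincide under the $1/2$-threshold definition of $U(\cdot)$ and of supremum, so that replacing $A$ by its directed hull is legitimate; (b) checking that passing to the cofinal tail $D'$ preserves the supremum in the fuzzy sense; and (c) confirming that the translation identities (positivity of $d-a_0$ and the shift formula $\sup(\widetilde D+a_0)=\sup\widetilde D+a_0$) follow cleanly from Definition~\ref{definition2.3.1}(i) and Theorem~\ref{theorem2.3.2*}. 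Once these threshold-level verifications are in place, the reductions to directed subsets and then to $X^+$ go through exactly as in the classical argument.
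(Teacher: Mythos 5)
Your proof is correct, but at the decisive step it diverges from the paper's argument, and the divergence is in your favor. Both proofs share the same skeleton: replace the bounded set by a right-directed set with the same upper bounds, translate into $X^+$, apply the hypothesis, and translate back (you via Theorem~\ref{theorem2.3.2*}, the paper via $\sup E=\sup F+x$). The difference is the directification device. The paper fixes $x\in D$ and takes $E=\{x\vee y\mid y\in D\}$, asserting that $E$ is ``clearly directed to the right.'' That assertion is false in general: take $X=R^3$ with the fuzzy order $\mu(u,v)=1$ if $u=v$, $\mu(u,v)=2/3$ if $u\leq v$ coordinatewise and $u\neq v$, and $\mu(u,v)=0$ otherwise; with $D=\{(1,0,0),(0,1,0),(0,0,1)\}$ and $x=(1,0,0)$ one gets $E=\{(1,0,0),(1,1,0),(1,0,1)\}$, which contains no upper bound of the pair $\{(1,1,0),(1,0,1)\}$, so the paper's hypothesis cannot be applied to $E$ (nor to its translate $F$). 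Your hull of \emph{all} finite suprema is closed under $\vee$, hence genuinely directed to the right, so the hypothesis really does apply; what you pay is that its elements need not dominate a fixed $a_0$, which forces the extra cofinal-tail step $D'$ --- and that step is sound, since under the crisp order $u\leq v\iff\mu(u,v)>1/2$ a cofinal subset of a directed set has exactly the same upper bounds, hence the same supremum. A minimal construction merging the two ideas is to take only those finite suprema having $a_0$ among the joinands: this set is directed, every element dominates $a_0$ (so the shift lands in $X^+$ with no tail argument), and it still has the same upper bounds as $A$. One further point in your favor: the paper's Definition~\ref{definition5.5}(ii) of fuzzy Dedekind $\sigma$-completeness also demands infima of bounded-below countable sets, and your reduction $\inf A=-\sup(-A)$ via Theorem~\ref{theorem2.3.2}(ii) covers this explicitly, whereas the paper's ``similar to the proof of (i)'' leaves it unaddressed.
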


\begin{proof}
\begin{enumerate}
  \item [(i)]Suppose $X$ is a fuzzy Dedekind complete, then the stated property obviously holds.
             Conversely, assume the stated property holds. Let $D$ be a nonempty subset of $X$ such that $U(D)\neq \phi$. We will show
             that $\sup D$ exists in $X$. To this end, take $x\in D$ and put
             \begin{equation*}
             E=\{x\vee y\mid y\in D\}.
             \end{equation*}
             Then $E$ is clearly directed to the right and $U(D)=D(E)\neq \phi$. Thus, it suffices to show that
             $\sup E$ exists. Define
             \begin{equation*}
             F=\{z-x\mid z\in E\}.
             \end{equation*}
             Then $F$ is still directed to the right and $U(F)\neq \phi$. Also, it is clear that $F\subset X^+$. By the hypothesis,
             $\sup F$ exists, implying that $\sup E=\sup F+x$ exists. Thus, $X$ is fuzzy Dedekind complete.
  \item [(ii)]Similar to the proof of (i).
\end{enumerate}
\end{proof}

Example 5.3 shows that the sum of two fuzzy bands of a fuzzy Riesz space need not be a fuzzy band.
However, the next theorem shows that the sum of two fuzzy bands of a fuzzy Dedekind complete  Riesz space is a fuzzy band.

\begin{lemma}\label{lemma5.3}
Let $B_1$ and $B_2$ be two disjoint fuzzy bands in a fuzzy Dedekind complete Riesz space $X$. Then $B_1\oplus B_2$ is a fuzzy band of $X$.
\end{lemma}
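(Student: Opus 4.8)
The plan is to reduce everything to the band characterisation in Theorem \ref{theorem5.0} (ii). Since $B_1+B_2$ is already a fuzzy ideal by Theorem \ref{theorem4.3} (i), and the sum is direct because disjointness forces $B_1\cap B_2=\{0\}$ (by the remark following Theorem \ref{theorem4.4}), it suffices to take an arbitrary increasing net $\{x_\alpha\}\subset(B_1\oplus B_2)^+$ with $x_\alpha\uparrow x$ and prove that the limit $x$ lies in $B_1\oplus B_2$.

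First I would split each term. By Theorem \ref{theorem4.3} (ii) we have $(B_1\oplus B_2)^+=B_1^+ + B_2^+$, so write $x_\alpha=u_\alpha+v_\alpha$ with $u_\alpha\in B_1^+$ and $v_\alpha\in B_2^+$; this decomposition is unique since the sum is direct. For $\alpha\le\beta$, the relation $\mu(x_\alpha,x_\beta)>1/2$ feeds into Theorem \ref{theorem4.3} (iii) to give $\mu(u_\alpha,u_\beta)>1/2$ and $\mu(v_\alpha,v_\beta)>1/2$, so both component nets are increasing. Because $v_\alpha$ is positive, condition (i) of Definition \ref{definition2.3.1} gives $\mu(u_\alpha,x_\alpha)>1/2$, and transitivity with $\mu(x_\alpha,x)>1/2$ bounds $\{u_\alpha\}$ above by $x$; likewise for $\{v_\alpha\}$. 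Fuzzy Dedekind completeness (each net is directed to the right, so Theorem \ref{theorem5.6} (i) applies) then yields $u=\sup_\alpha u_\alpha$ and $v=\sup_\alpha v_\alpha$, i.e.\ $u_\alpha\uparrow u$ and $v_\alpha\uparrow v$. Since $B_1$ and $B_2$ are fuzzy bands, Theorem \ref{theorem5.0} (ii) places $u\in B_1$ and $v\in B_2$.

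It then remains to identify $x$ with $u+v$, which I would do by showing $u+v=\sup\{x_\alpha\}$ and invoking uniqueness of suprema (Theorem \ref{theorem2.1.1}). That $u+v\in U(\{x_\alpha\})$ is immediate from $\mu(u_\alpha,u)>1/2$, $\mu(v_\alpha,v)>1/2$ and the remark after Definition \ref{definition2.3.1}. For the least-upper-bound property, the decisive move is to exploit disjointness: $u\in B_1$ and $v\in B_2$ with $B_1\bot B_2$ give $u\wedge v=0$, so Theorem \ref{theorem2.3*} collapses the sum to $u+v=u\vee v$. Any $w\in U(\{x_\alpha\})$ dominates each $u_\alpha$ and each $v_\alpha$ (again via $\mu(u_\alpha,x_\alpha)>1/2$, $\mu(v_\alpha,x_\alpha)>1/2$ and transitivity), hence $\mu(u,w)>1/2$ and $\mu(v,w)>1/2$; since $u\vee v=\sup\{u,v\}$, this forces $\mu(u\vee v,w)>1/2$, that is $\mu(u+v,w)>1/2$. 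Thus $u+v=\sup\{x_\alpha\}=x\in B_1\oplus B_2$, as required.

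The main obstacle I anticipate is exactly this last step: turning the two separate bounds $\mu(u,w)>1/2$ and $\mu(v,w)>1/2$ into a single bound on the sum $u+v$. Naively adding the inequalities is not available, and the device that rescues the argument is precisely the disjointness of $B_1$ and $B_2$, which rewrites $u+v$ as the join $u\vee v$ and lets the universal property of the supremum of the pair $\{u,v\}$ do the work. Everything else (directness of the sum, monotonicity of the component nets, and existence of their suprema) is routine bookkeeping resting on Theorems \ref{theorem4.3}, \ref{theorem5.0}, and \ref{theorem5.6}, so I would keep those verifications brief.
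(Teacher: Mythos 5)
Your proof is correct, and its skeleton coincides with the paper's: both decompose each member of the increasing net via Theorem \ref{theorem4.3} into components $u_\alpha\in B_1^+$, $v_\alpha\in B_2^+$, use part (iii) of that theorem to see that the component nets are increasing, bound them above by $x$, invoke fuzzy Dedekind completeness to obtain $u=\sup_\alpha u_\alpha$ and $v=\sup_\alpha v_\alpha$, and use the band property of $B_1$, $B_2$ to conclude $u\in B_1$, $v\in B_2$. The genuine difference is the closing identification $x=u+v$. The paper dispatches it in one line by additivity of suprema, $\sup\{u_\alpha+v_\alpha\}=\sup\{u_\alpha\}+\sup\{v_\alpha\}$, which rests on Theorem \ref{theorem2.3.2*}; strictly speaking that theorem computes the supremum of $\{u_\alpha+v_\beta\}$ over all pairs $(\alpha,\beta)$, so the paper's step tacitly also needs the (easy) observation that the diagonal is cofinal in the product, which follows from directedness together with the monotone decomposition. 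You avoid this entirely by exploiting disjointness a second time: $u\wedge v=0$, hence $u+v=u\vee v$ by Theorem \ref{theorem2.3*}, and the universal property of the binary supremum turns the two bounds $\mu(u,w)>1/2$ and $\mu(v,w)>1/2$ into $\mu(u+v,w)>1/2$ for every upper bound $w$ of $\{x_\alpha\}$. Your ending is slightly longer but self-contained, and it makes explicit where the disjointness of $B_1$ and $B_2$ is used beyond making the sum direct; the paper's ending is shorter but leans on the unstated cofinality point. Both arguments are sound.
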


\begin{proof}
Let $D$ be a nonempty subset of $(B_1\oplus B_2)^+$ such that $D$ is directed to the right and $x_0=\sup D$ exists. In view of Theorem \ref{theorem5.6}, it suffices to show that $x_0\in B_1\oplus B_2$. To this end, take $x\in D$. By Theorem \ref{theorem4.3}, $x$ can be uniquely written as $x=y_x+z_x$, where $y_x\in B_1$ and $z_x\in B_2$. Notice that if $x_1, x_2, x_3\in D$ such that $\mu(x_1, x_3)>1/2$ and $\mu(x_2, x_3)>1/2$, then $\mu(y_{x_1}\vee y_{x_2}, y_{x_3})>1/2$ and $\mu(z_{x_1}\vee z_{x_2}, z_{x_3})>1/2$. Therefore, $\mu(y_x, x_0)>1/2$ and $\mu(z_x, x_0)>1/2$ for all $x\in D$. Since $X$ is fuzzy Dedekind complete, there exist $w_1, w_2\in X$ such that $w_1=\sup \{y_x\}_{x\in D}$ and $w_2=\sup \{z_x\}_{x\in D}$. As $\{y_x\}_{x\in D}\subset B_1$ and $\{z_x\}_{x\in D}\subset B_2$, we have $w_1\in B_1$ and $w_2\in B_2$. It follows that
\begin{equation*}
x_0=\sup\{y_x+z_x\}_{x\in D}=\sup \{y_x\}_{x\in D}+\sup \{z_x\}_{x\in D}=w_1+w_2\in B_1\oplus B_2.
\end{equation*}
This proves that $B_1\oplus B_2$ is a fuzzy band in $X$.
\end{proof}

\begin{theorem}\label{theorem5.7}
Let $B$ be a fuzzy band of a fuzzy Riesz space $X$. If $X$ is fuzzy Dedekind complete, then $X=B\oplus B^d$.
\end{theorem}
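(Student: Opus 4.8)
The plan is to imitate the classical band-decomposition theorem for Dedekind complete Riesz spaces, working throughout with the relation ``$\le$'' encoded by $\mu(\cdot,\cdot)>1/2$. First I would record two reductions. Since $B\perp B^d$, the remark following Theorem~\ref{theorem4.4} (equivalently Theorem~\ref{theorem4.4}(iii)) gives $B\cap B^d=\{0\}$, so the algebraic sum $B+B^d$ is automatically a direct sum $B\oplus B^d$; moreover it is a vector subspace of $X$ because $B$ is a fuzzy ideal and $B^d$ is a fuzzy ideal by Theorem~\ref{theorem4.4}(v). Hence it suffices to prove $X\subset B\oplus B^d$, and because $B\oplus B^d$ is a subspace it is enough to decompose positive elements: for arbitrary $x\in X$, Theorem~\ref{theorem2.3*} gives $x=(x\vee 0)+(x\wedge 0)$, so $x$ is a difference of the positive elements $x\vee 0$ and $-(x\wedge 0)=(-x)\vee 0$, and decomposing each places $x$ in $B\oplus B^d$.

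The heart of the proof is the construction of the $B$-component of a positive element. Fix $x\in X^+$ and set
\[
D_x=\{v\in B^+\mid \mu(v,x)>1/2\}.
\]
This set is nonempty (it contains $0$) and bounded above by $x$. It is directed to the right: if $v_1,v_2\in D_x$, then $v_1\vee v_2\in B^+$ because $B$ is a fuzzy Riesz subspace, and $\mu(v_1\vee v_2,x)>1/2$ because $x$ is an upper bound of $\{v_1,v_2\}$, whence $v_1\vee v_2\in D_x$. Since $X$ is fuzzy Dedekind complete, Theorem~\ref{theorem5.6}(i) yields $x_1=\sup D_x\in X$. As $D_x\subset B$ and $x_1=\sup D_x$, the band characterization in the remark following Definition~\ref{definition5.1} gives $x_1\in B$. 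Put $x_2=x-x_1$; from $\mu(x_1,x)>1/2$ (as $x\in U(x_1)$) and the translation invariance in Definition~\ref{definition2.3.1} we obtain $\mu(0,x_2)>1/2$, i.e. $x_2\in X^+$.

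It remains to show $x_2\in B^d$, which is the step I expect to be the main obstacle. By Definition~\ref{definition2.3.5} it suffices to prove $x_2\wedge w=0$ for every $w\in B^+$, since each $|b|$ with $b\in B$ lies in $B^+$. Fix such a $w$ and set $v=x_2\wedge w$. Then $\mu(v,w)>1/2$ with $w\in B$, so fuzzy solidness of $B$ gives $v\in B^+$, and $\mu(v,x_2)>1/2$. Consequently $x_1+v\in B^+$, and since $\mu(v,x-x_1)>1/2$, translation invariance gives $\mu(x_1+v,x)>1/2$; thus $x_1+v\in D_x$. Because $x_1=\sup D_x$ is an upper bound of $D_x$, we have $\mu(x_1+v,x_1)>1/2$, while $\mu(x_1,x_1+v)>1/2$ holds as $v$ is positive; antisymmetry of $\mu$ then forces $x_1+v=x_1$, i.e. $v=0$. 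Hence $x_2\perp B$, so $x_2\in B^d$, and $x=x_1+x_2\in B\oplus B^d$.

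The delicate points to watch are that the supremum interacts correctly with the strict threshold ``$>1/2$'' (handled through the upper-bound definition of $\sup$ and through antisymmetry), and that fuzzy solidness and translation invariance are invoked precisely in the positive cone, where the relevant hypotheses apply. Assembling the three parts gives $X^+\subset B\oplus B^d$, and the linearity reduction of the first paragraph upgrades this to $X=B\oplus B^d$, completing the proof.
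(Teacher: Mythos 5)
Your proof is correct, but it follows a genuinely different route from the paper's. The paper's own argument is three lines long and leans on previously established machinery: by Lemma \ref{lemma5.3} the sum $B\oplus B^d$ of the two disjoint bands is itself a fuzzy band; its disjoint complement is $\{0\}$; and since fuzzy Dedekind completeness implies the fuzzy Archimedean property (Lemma \ref{lemma5.1}), Theorem \ref{theorem5.5} together with Theorem \ref{theorem4.4}(iv) yields $B\oplus B^d=(B\oplus B^d)^{dd}=X$. You instead construct the decomposition directly, in the style of the classical band decomposition theorem: for $x\in X^+$ the supremum $x_1=\sup(B^+\cap[0,x])$ exists by fuzzy Dedekind completeness, lies in $B$ because $B$ is fuzzy order closed, and the remainder $x-x_1$ lies in $B^d$ by your antisymmetry argument (if $v=(x-x_1)\wedge w$ with $w\in B^+$, then $x_1+v\in D_x$, so $\mu(x_1+v,x_1)>1/2$, and positivity of $v$ plus antisymmetry force $v=0$). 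Your route is more elementary and self-contained---it avoids Lemma \ref{lemma5.1}, Lemma \ref{lemma5.3} and Theorem \ref{theorem5.5} altogether---and it yields extra information: the explicit formula $P_B(x)=\sup(B^+\cap[0,x])$, which is precisely the content of the implication (ii)$\Rightarrow$(iii) of Theorem \ref{theorem6.1} and of Theorem \ref{theorem6.2}(ii), results the paper proves later by a nearly identical computation. What the paper's approach buys is brevity and reuse of the disjoint-complement structure theory; what yours buys is independence from that theory plus a constructive description of the band projection. Both arguments are sound.
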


\begin{proof}
By Lemma \ref{lemma5.3}, $B\oplus B^d$ is a fuzzy band of $X$. Let $x\in (B\oplus B^d)^d$, then $x\in B\cap B^d=\{0\}$. Therefore, $(B\oplus B^d)^d=\{0\}$. It follows
that $B\oplus B^d=(B\oplus B^d)^{dd}=X$.
\end{proof}

\section{Fuzzy band projections}

\begin{definition}\label{definition6.1}
\emph{A fuzzy band $B$ of a fuzzy Riesz space $X$ is called a \emph{fuzzy projection band} if $X=B\oplus B^d$. }
\end{definition}

\begin{definition}\label{definition6.2}
\emph{Let $X$ be a fuzzy Riesz space. An element $x\in X$ is said to be a \emph{fuzzy projection vector} if the band generated by $x$ is a fuzzy projection band. $X$ is said to have the \emph{fuzzy principal projection property} if each element in $X$ is a fuzzy projection vector.}
\end{definition}

Let $B$ is a fuzzy projection band on a fuzzy Riesz space $X$. Then each $x\in X$ has a unique decomposition $x=x_1+x_2$, where $x_1\in B$ and $x_2\in B^d$. Therefore, the mapping $P_B: X\rightarrow X$ defined by
\begin{equation}\label{6.1}
P_B(x)=x_1
\end{equation}
is a projection.

\begin{definition}\label{definition6.3}
\emph{Let $B$ be a fuzzy projection band of a fuzzy Riesz space $X$. The projection $P_B$ defined by Equation (\ref{6.1}) is called a \emph{fuzzy band projection} on $X$. In particular, if $x$ is a projection vector of $X$, we will write $P_x$ for $P_{B_x}$.}
\end{definition}

\begin{definition}\label{definition6.4}
\emph{Let $X$ be a fuzzy Riesz space and $x, y\in X$ such that $\mu(x, y)> 1/2$. Then the set $\{z\in X\mid \mu(x, z)>1/2 \text{ and $\mu(z, y)>1/2$} \}$ is called a \emph{fuzzy order interval} and is denoted by $[x, y]$.}
\end{definition}

\begin{theorem}\label{theorem6.1}
Let $B$ be a fuzzy ideal of a fuzzy Riesz space $X$. Then the following statements are equivalent.
\begin{enumerate}
  \item [(i)]$B$ is fuzzy projection band.
  \item [(ii)]For each $x\in X^+$, the supremum of the set $D_x=\{y\in B^+\mid \mu(y, x)>1/2\}=B^+\cap [0, x]$ exists in $X$ and belongs to $B$.
  \item [(iii)]There exists a fuzzy ideal $I$ of $X$ such that $X=I\oplus B$.
\end{enumerate}
\end{theorem}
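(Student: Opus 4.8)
The plan is to establish the cycle (i) $\Rightarrow$ (ii) $\Rightarrow$ (iii) $\Rightarrow$ (i), with the substantive work concentrated in the first two implications; the step (iii) $\Rightarrow$ (i) will be essentially a citation of Theorem \ref{theorem5.4}. Throughout I will use that $B \cap B^d = \{0\}$ (Theorem \ref{theorem4.4}(iii)), so that any sum $B + B^d$ is automatically direct.

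For (i) $\Rightarrow$ (ii), I would fix $x \in X^+$ and use the decomposition $X = B \oplus B^d$ to write $x = x_1 + x_2$. Since $x$ is positive and $B, B^d$ are ideals, Theorem \ref{theorem4.3}(ii) lets me take $x_1 \in B^+$ and $x_2 \in (B^d)^+$, and this decomposition is unique. I then argue $x_1 = \sup D_x$: first $x_1 \in D_x$, because $x_2 \ge 0$ forces $\mu(x_1, x) > 1/2$; next $x_1$ is an upper bound of $D_x$, because for $y \in D_x$ the element $(y - x_1)\vee 0$ lies in $B$ yet is dominated by $x_2 \in B^d$, hence lies in $B \cap B^d = \{0\}$, giving $\mu(y, x_1) > 1/2$; finally, any upper bound of $D_x$ dominates $x_1 \in D_x$, so $x_1$ is the least one. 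Thus $\sup D_x = x_1 \in B$.

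The heart of the argument is (ii) $\Rightarrow$ (iii), where I manufacture the band decomposition out of the suprema. For $x \in X^+$, set $x_1 = \sup D_x$, which exists and lies in $B$ by hypothesis; since $0 \in D_x$ and $x \in U(D_x)$, I get $x_1 \in B^+$ and $\mu(x_1, x) > 1/2$, so $x_2 := x - x_1 \in X^+$. The key claim is $x_2 \in B^d$, which I prove by contradiction: if $x_2 \wedge y \neq 0$ for some $y \in B^+$, then $w := x_2 \wedge y$ is a nonzero element of $B^+$ with $w \le x_2$, so $x_1 + w$ again belongs to $D_x$; then $x_1 = \sup D_x$ yields $\mu(x_1 + w, x_1) > 1/2$, hence $\mu(w,0) > 1/2$, which together with $w \ge 0$ forces $w = 0$ by Theorem \ref{theorem2.3.1}(ii), a contradiction. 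Because $B$ is solid it suffices to test disjointness against positive vectors $y \in B^+$, so indeed $x_2 \in B^d$ and $x = x_1 + x_2 \in B + B^d$. Extending to an arbitrary $x \in X$ by writing it as a difference of positive elements (using that $X$ is directed) gives $X = B \oplus B^d$, and taking $I = B^d$, a fuzzy ideal by Theorem \ref{theorem4.4}(v), delivers (iii).

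Finally, (iii) $\Rightarrow$ (i): given a fuzzy ideal $I$ with $X = I \oplus B$, Theorem \ref{theorem5.4} makes both summands fuzzy bands and gives $B = I^d$ and $I = B^d$; hence $X = B \oplus B^d$, and $B$ is a fuzzy projection band by Definition \ref{definition6.1}. I expect the main obstacle to be the disjointness claim $x_2 \in B^d$ inside (ii) $\Rightarrow$ (iii): the delicate point is that $w = x_2 \wedge y$ genuinely produces a strictly larger member $x_1 + w$ of $D_x$, contradicting leastness of $\sup D_x$, and that reducing to positive test vectors $y$ is legitimate, which rests on the solidness of $B$.
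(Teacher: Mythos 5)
Your proposal is correct and follows essentially the same route as the paper: the same cycle $(i)\Rightarrow(ii)\Rightarrow(iii)\Rightarrow(i)$, the same use of the decomposition $x=x_1+x_2$ with $(y-x_1)^+\in B\cap B^d=\{0\}$ for $(i)\Rightarrow(ii)$, the candidate complement $x-\sup D_x$ for $(ii)\Rightarrow(iii)$, and Theorem \ref{theorem5.4} for $(iii)\Rightarrow(i)$. The only (cosmetic) difference is in the disjointness step: the paper shows $z\wedge w=0$ directly via the translation identity $y+z\wedge w=(y+z)\wedge(y+w)$ and antisymmetry of $\mu$, whereas you reach the same conclusion by noting $x_1+(x_2\wedge y)\in D_x$ and invoking leastness of the supremum; you are also slightly more careful than the paper in extending the decomposition from $X^+$ to all of $X$.
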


\begin{proof}
\begin{enumerate}
  \item []$(i)\Longrightarrow (ii)$ Let $x\in X^+$. By the hypothesis, $x$ can be uniquely written as
            $x=x_1+x_2$, where $x_1\in B^+$ and $x_2\in (B^d)^+$. Take any element $z\in D_x$, i.e., $z\in B^+$ such that $\mu(z, x)=\mu(z, x_1+x_2)>1/2$. Then $\mu(z-x_1, x_2)>1/2$, implying $\mu((z-x_1)^+, x_2)>1/2$.
            It follows from the fuzzy solidness of $B^d$ that $(z-x_1)^+\in B^d$. Since $(z-x_1)^+\in B$ and $B\cap B^d=\{0\}$, we have $(z-x_1)^+=0$. Hence, $\mu(z, x_1)>1/2$ for all $z\in D_x$, that is, $x_1\in U(D_x)$. As $x_1\in D_x$, we have $\sup D_x=x_1\in B$.
  \item []$(ii)\Longrightarrow (iii)$ Let $x\in X^+$. By (ii), $y=\sup D_x$ exists and belongs to $B$.
            Put $z=x-y$. Then $\mu(0, z)>1/2$. Take $w\in B^+$. We have $z\wedge w\in B^+$; hence $y+z\wedge w\in B$. It follows from Theorem \ref{theorem2.3.2*} that
            \begin{equation*}
            y+z\wedge w=(y+z)\wedge (y\wedge w)=x\wedge (y\wedge w),
            \end{equation*}
            implying that $\mu(y+z\wedge w, x)>1/2$, which further implies $\mu(y+z\wedge w, y)>1/2$.
            On the other hand, we have $\mu(0, z\wedge w)>1/2$; hence $\mu(y, y+z\wedge w)>1/2$. Therefore,
            the antisymmetry of $\mu$ implies $y=y+z\wedge w$, i.e., $z\wedge w=0$. It follows that $z\in B^d$. This proves $X=B\oplus B^d$. Finally, (iii) follows by taking $A=B^d$.
  \item []$(iii)\Longrightarrow (i)$ The conclusion follows readily from Theorem \ref{theorem5.4}.
\end{enumerate}
\end{proof}

Let $X$ and $Y$ be two fuzzy Riesz spaces and $\mu$ and $\nu$ are the associated fuzzy orders on $X$ and $Y$, respectively. Suppose $\mathcal{T}$ denotes the collection of all operators from $X$ to $Y$, that is, $\mathcal{T}=\{T\mid T: X\rightarrow Y\}$. We may equip $T$ with a partial order $\preceq$ defined by $S\preceq T$ if and only if $\nu(S(x), T(x))>1/2$ for all $x\in X$. Also, we may write $T(x)$ as $Tx$ when no confusion will arise.

\begin{definition}\label{definition6.5}
\emph{Let $X$ and $Y$ be two fuzzy Riesz spaces and $\mu$ and $\nu$ are the associated fuzzy orders on $X$ and $Y$, respectively. An operator $T: X\rightarrow Y$ is said to be \emph{fuzzy positive} if $\mu(0, x)>1/2$ implies $\nu(0, T(x))>1/2$.}
\end{definition}
\noindent \textbf{Remark.} Our definition is slightly more general than Definition 2.1 in \cite{Beg2}. Indeed, it is easy to see that if $T$ is fuzzy positive in the sense of Definition 2.1 in \cite{Beg2}, then it must be fuzzy positive in the sense of Definition \ref{definition6.3}. The next example shows that the converse need not hold.\\

\noindent \textbf{Example 6.1.} Consider $X=Y=R$. Equip $X$ with a membership function $\mu: X\times X\rightarrow [0, 1]$ defined by
\begin{equation*}
\mu(x, y)=\left\{
            \begin{array}{ll}
              1, & \hbox{ if $x=y$;} \\
              4/5, & \hbox{if $x<y$;} \\
              0, & \hbox{otherwise.}
            \end{array}
          \right.
\end{equation*}
Equip $Y$ with a membership function $v: X\times X\rightarrow [0, 1]$ defined by
\begin{equation*}
\nu(x, y)=\left\{
            \begin{array}{ll}
              1, & \hbox{ if $x=y$;} \\
              2/3, & \hbox{if $x<y$;} \\
              0, & \hbox{otherwise.}
            \end{array}
          \right.
\end{equation*}
It is easy to see $\mu$ and $\nu$ are fuzzy orders on $X$ and $Y$, respectively.
Let $T: X\rightarrow Y$ be the identity mapping, that is, $T(x)=x$ for all $x\in X$. Then $T$ is evidently fuzzy positive in the sense of Definition \ref{definition6.3}. However, $\mu(1, 2)=4/5$ and $\nu(T(1), T(2))=\nu(1, 2)=2/3\not \geq \mu(1, 2)$.
Therefore, $T$ is not fuzzy positive in the sense of Definition 2.1 in \cite{Beg2}.

\begin{theorem}\label{theorem6.2}
Let $P_B$ be a fuzzy band projection on a fuzzy Riesz space $X$. Then the following two statements hold.
\begin{enumerate}
  \item [(i)]$P_B$ is fuzzy positive.
  \item [(ii)]$P_B(x)=\sup\{y\in B^+\mid \mu(y, x)>1/2\}=\sup (B\cap [0, x])$.
\end{enumerate}
\end{theorem}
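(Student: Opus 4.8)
The plan is to read both statements off the defining direct-sum decomposition $X = B \oplus B^d$, leaning on the decomposition of the positive cone from Theorem \ref{theorem4.3} and on the computation already performed in the implication $(i)\Rightarrow(ii)$ of Theorem \ref{theorem6.1}.

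For (i), I would take a positive element $x \in X^+$ and apply Theorem \ref{theorem4.3} (ii), which yields $X^+ = (B \oplus B^d)^+ = B^+ + (B^d)^+$. Hence $x = x_1 + x_2$ for some $x_1 \in B^+$ and $x_2 \in (B^d)^+$. Because the splitting of $x$ across $B \oplus B^d$ is unique and $x_1 \in B$, $x_2 \in B^d$, this representation must coincide with the one defining $P_B$; therefore $P_B(x) = x_1 \in B^+$, i.e. $\mu(0, P_B(x)) > 1/2$. Since $x \in X^+$ was arbitrary, $P_B$ is fuzzy positive.

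For (ii), I first note that the two sets in the statement coincide with $D_x = B^+ \cap [0,x]$: every $y \in [0,x]$ satisfies $\mu(0,y) > 1/2$, so $B \cap [0,x] = B^+ \cap [0,x] = \{y \in B^+ \mid \mu(y,x) > 1/2\}$. Fixing $x \in X^+$ and writing $x = x_1 + x_2$ with $x_1 = P_B(x) \in B^+$ and $x_2 \in (B^d)^+$ as above, I would verify that $x_1$ is simultaneously a member and an upper bound of $D_x$. Membership is immediate: $x - x_1 = x_2$ is positive, so $\mu(x_1, x) > 1/2$ and $x_1 \in D_x$. For the upper-bound property, take $z \in D_x$; then $\mu(z,x) > 1/2$ gives $\mu(z - x_1, x_2) > 1/2$, hence $\mu((z - x_1)^+, x_2) > 1/2$, and the fuzzy solidness of $B^d$ forces $(z - x_1)^+ \in B^d$. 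As $(z - x_1)^+ \in B$ and $B \cap B^d = \{0\}$, we get $(z - x_1)^+ = 0$, i.e. $\mu(z, x_1) > 1/2$. Thus $x_1 \in U(D_x)$, and being itself an element of $D_x$ it is the supremum, giving $P_B(x) = x_1 = \sup D_x$.

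The only genuinely non-bookkeeping step is the upper-bound argument, where the order structure interacts with the disjointness $B \cap B^d = \{0\}$ through the solidness of $B^d$; this is precisely the computation appearing in the proof of $(i)\Rightarrow(ii)$ of Theorem \ref{theorem6.1}, so I would either reproduce it in the two lines above or cite it directly. Everything else reduces to uniqueness of the direct-sum decomposition and the cone identity of Theorem \ref{theorem4.3}, and the existence of $\sup D_x$ is automatic since the argument exhibits it explicitly as $x_1$.
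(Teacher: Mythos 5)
Your proposal is correct and takes essentially the same approach as the paper: part (i) is the same positive-cone decomposition argument (the paper cites the Riesz decomposition theorem where you cite Theorem \ref{theorem4.3}(ii), to the same effect), and part (ii) rests on the same disjointness/solidness computation $(z-x_1)^+\in B\cap B^d=\{0\}$. The only organizational difference is that the paper invokes the statement of Theorem \ref{theorem6.1} to get existence of $z=\sup D_x$ in $B$ and then identifies $z$ with $x_1$ by a short squeeze, whereas you exhibit $x_1$ directly as a member and upper bound of $D_x$ by inlining that computation; the two are mathematically equivalent.
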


\begin{proof}
\begin{enumerate}
  \item [(i)]Let $B$ be the fuzzy projection band associated with $P_B$. Then $B\oplus B^d=X$.
             If $x\in X^+$, then the Riesz Decomposition Theorem implies $x=x_1+x_2$, where
             $x_1\in B^+$ and $X_2\in (B^d)^+$. Thus, $\mu(0, P_B(x))=\mu(0, x_1)>1/2$, showing that $P_B$ is fuzzy positive.
  \item [(ii)]By Theorem \ref{theorem6.1}, $z=\sup\{y\in B^+\mid \mu(y, x)>1/2\}$ exists and belongs to $B$.
              Let $B$ be the fuzzy band associated with $P_B$. Then each $x\in X^+$ may be uniquely written as              $x=x_1+x_2$, where $x_1\in B^+$ and $x_2\in (B^d)^+$. It is clear that $\mu(x_1, z)>1/2$   and $\mu(z, x)>1/2$ which implies $\mu(0, z-x_1)>1/2$ and $\mu(z-x_1, x-x_1)=\mu(z-x_1, x_2)>1/2$, respectively. Thus, $z-x_1\in B^d$. On the other hand, $z-x_1\in B$. Hence, $z-x_1\in B\cap B^d=\{0\}$. It follows that $z=x_1$, that is, $P_B(x)=z=\sup D_x$, proving the theorem.
\end{enumerate}
\end{proof}

Next, we apply Theorem \ref{theorem6.1} and Theorem \ref{theorem6.2} to give a characterization theorem of fuzzy projection vectors.

\begin{theorem}\label{theorem6.4}
An element $x$ in a fuzzy Riesz space $X$ is a fuzzy projection vector if and only if the supremum of the set $E_y=\{y\wedge n|x|\}_{n\in N}$ exists in $X$ for each positive element $y\in X$. In this case, we have
\begin{equation*}
P_x(y)=\sup E_y=\sup \{y\wedge n|x|\}_{n\in N}, \quad \text{for all $y\in X^+$}.
\end{equation*}
\end{theorem}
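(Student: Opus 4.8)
The plan is to reduce everything to the characterization of projection bands in Theorem \ref{theorem6.1} and then identify the two candidate suprema. Since $x$ is a fuzzy projection vector precisely when the principal band $B_x$ generated by $x$ is a fuzzy projection band (Definition \ref{definition6.2}), Theorem \ref{theorem6.1} tells us this holds if and only if $\sup(B_x^+\cap[0,y])$ exists in $X$ and belongs to $B_x$ for every $y\in X^+$. Thus the whole theorem comes down to showing that, for each positive $y$, the supremum of $E_y=\{y\wedge n|x|\}_{n\in N}$ and the supremum of $B_x^+\cap[0,y]$ coincide whenever either one exists.

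First I would record the elementary inclusion $E_y\subset B_x^+\cap[0,y]$. Each $y\wedge n|x|$ satisfies $0\le y\wedge n|x|\le n|x|$, so by the fuzzy solidness of the ideal $B_x$ (which contains $n|x|$) it lies in $B_x^+$; and $\mu(y\wedge n|x|,y)>1/2$ places it in $[0,y]$. Moreover $E_y$ is an increasing sequence, so by Theorem \ref{theorem5.0}(ii) its supremum, if it exists, automatically belongs to the band $B_x$, and it is positive.

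The heart of the argument is the following comparison. Given any $z\in B_x^+\cap[0,y]$, Corollary \ref{corollary5.1} gives $z\wedge n|x|\uparrow z$ (here $z=|z|$ since $z\ge0$). From $\mu(z,y)>1/2$ together with $z\wedge n|x|\le z$ and $z\wedge n|x|\le n|x|$ one sees that $z\wedge n|x|$ is a lower bound of $\{y,n|x|\}$, and the defining property of the infimum yields $\mu(z\wedge n|x|,\,y\wedge n|x|)>1/2$. Hence any upper bound $u$ of $E_y$ dominates every $z\wedge n|x|$, and since $z=\sup_n(z\wedge n|x|)$ it dominates $z$ as well. Consequently $E_y$ and $B_x^+\cap[0,y]$ have exactly the same upper bounds, so they share the same supremum whenever one exists. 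This monotonicity of the meet extracted directly from the order-theoretic definitions is the step I expect to require the most care, since it cannot simply be quoted.

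With this comparison in hand both implications follow at once. If $x$ is a projection vector, Theorem \ref{theorem6.1} supplies $\sup(B_x^+\cap[0,y])\in B_x$, whence $\sup E_y$ exists and equals it; conversely, if $\sup E_y$ exists then it lies in $B_x$ and serves as $\sup(B_x^+\cap[0,y])$, so Theorem \ref{theorem6.1} makes $B_x$ a fuzzy projection band and $x$ a projection vector. Finally the displayed formula is read off from Theorem \ref{theorem6.2}(ii), which identifies $P_x(y)=P_{B_x}(y)$ with $\sup(B_x^+\cap[0,y])$, equal to $\sup E_y$ by the comparison above.
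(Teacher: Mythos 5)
Your proof is correct and follows essentially the same route as the paper: both arguments reduce the statement to Theorem \ref{theorem6.1} and Theorem \ref{theorem6.2} by showing that $E_y$ and $B_x\cap[0,y]$ have exactly the same upper bounds, with the nontrivial inclusion $U(E_y)\subset U(B_x\cap[0,y])$ obtained from Corollary \ref{corollary5.1} ($w\wedge n|x|\uparrow w$ for $w\in B_x\cap[0,y]$) together with the monotonicity of the meet. Your explicit appeal to Theorem \ref{theorem5.0}(ii) to place $\sup E_y$ inside $B_x$ in the converse direction is a detail the paper leaves implicit, but it is the same argument.
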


\begin{proof}
Let $x\in X$ and $y\in X^+$. Then $E_y\subset  B_x\cap [0, y]$, where $B_x$ is the fuzzy principal band generated by $x$. Thus, $U(B_x\cap [0, y])\subset U(E_y)$. Conversely, if $z\in U(E_y)$, then $\mu(y\wedge n|x|, z)>1/2$ for all $n\in N$. Take any element $w\in B_x\cap [0, y]$. Then $\mu(w\wedge n|x|, y\wedge n|x|)>1/2$,
implying $\mu(w\wedge n|x|, z)>1/2$. Since Corollary \ref{corollary5.1} shows that $w\wedge n|x| \uparrow w$, we have $\mu(w, z)>1/2$, that is, $z\in U(B_x \cap [0, y])$, implying  $U(E_y)\subset U(B_x\cap [0, y])$. Therefore, $U(E_y)= U(B_x\cap [0, y])$. Now the theorem follows from Theorem \ref{theorem6.1} and Theorem \ref{theorem6.2}.
\end{proof}

\begin{lemma}\label{lemma6.1}
Let $X$ and $Y$ be two fuzzy Riesz spaces and $\mu$ and $\nu$ be the associated fuzzy orders, respectively. If $T: X\rightarrow Y$ is a fuzzy positive operator from $X$ to $Y$, then
\begin{equation*}
\nu(|Tx|, T|x|)>1/2\quad \text{for all $x\in X$.}
\end{equation*}
\end{lemma}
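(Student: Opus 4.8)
The plan is to reduce this to the classical inequality $|Tx| \le T|x|$ by first establishing that a fuzzy positive operator is \emph{fuzzy monotone}, and then exploiting the fact that $|Tx|$ is a supremum in $Y$. Throughout I use that $T$ is linear, which I take to be part of the meaning of ``operator'' here, since the conclusion manifestly requires $T(-x)=-Tx$ and $T(u-v)=Tu-Tv$.

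First I would prove the monotonicity claim: if $\mu(u,v)>1/2$ then $\nu(Tu,Tv)>1/2$. Indeed, applying the compatibility axiom (Definition \ref{definition2.3.1}(i)) with translate $z=-u$ gives $\mu(0,v-u)\ge \mu(u,v)>1/2$, so $v-u$ is positive in $X$. Fuzzy positivity of $T$ then yields $\nu(0,T(v-u))>1/2$, and by linearity $T(v-u)=Tv-Tu$. Translating by $Tu$ in the compatibility axiom on $Y$ gives $\nu(Tu,Tv)\ge \nu(0,Tv-Tu)>1/2$. This is the fuzzy analogue of ``positive operators preserve order.''

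Next I would record the two relations $\mu(x,|x|)>1/2$ and $\mu(-x,|x|)>1/2$. These hold because $|x|=x\vee(-x)$ is by definition a supremum of $\{x,-x\}$, hence an upper bound of each; equivalently, by Theorem \ref{theorem2.1.3}(iv), $x\vee|x|=|x|$ and $(-x)\vee|x|=|x|$. Applying the monotonicity step to each (and using $T(-x)=-Tx$) produces $\nu(Tx,T|x|)>1/2$ and $\nu(-Tx,T|x|)>1/2$.

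Finally I would assemble these into the conclusion. Both inequalities say that $T|x|$ dominates $Tx$ and $-Tx$ in $Y$, so the upper-bound value computes as $U(\{Tx,-Tx\})(T|x|)=\min\{\nu(Tx,T|x|),\nu(-Tx,T|x|)\}>1/2>0$, whence $T|x|\in U(\{Tx,-Tx\})$. Since $|Tx|=Tx\vee(-Tx)=\sup\{Tx,-Tx\}$, clause (ii) of the definition of supremum forces $T|x|\in U(|Tx|)$, which is exactly $\nu(|Tx|,T|x|)>1/2$. The delicate point, and the one I expect to be the main obstacle, is this last step: one must check against the piecewise definition of $U$ that $T|x|$ genuinely lands in $U(\{Tx,-Tx\})$ rather than merely in $U(Tx)$ and $U(-Tx)$ separately, which is precisely why verifying that \emph{both} defining inequalities strictly exceed $1/2$ (so the zero clause in the definition of $U$ is never triggered) is essential.
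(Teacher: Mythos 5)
Your proof is correct and follows essentially the same route as the paper's: both establish $\nu(Tx, T|x|)>1/2$ and $\nu(-Tx, T|x|)>1/2$ by translating the relations $\mu(\pm x,|x|)>1/2$ into positivity statements, applying fuzzy positivity of $T$ together with linearity, and translating back, then conclude from the fact that $|Tx|=Tx\vee(-Tx)$ is the supremum of $\{Tx,-Tx\}$. Your packaging of the translation argument as a standalone monotonicity claim, and your explicit check against the piecewise definition of $U$ in the final step (which the paper simply asserts with ``This shows that $\nu(|Tx|,T|x|)>1/2$''), are just more careful renderings of the identical argument.
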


\begin{proof}
Let $x\in X$. Then $\mu(x, |x|)>1/2$ and $\mu(-x, |x|)>1/2$. Thus, $\mu(0, |x|-x)>1/2$ and $\mu(0, |x|+x)1/2$. Since $T$ is fuzzy positive, we have $\mu(0, T|x|-Tx)>1/2$ and $\mu(0, T|x|+Tx)1/2$. It follows that $\mu(Tx, T|x|)>1/2$ and $\mu(-Tx, T|x|)>1/2$, respectively. This shows that $\nu(|Tx|, T|x|)>1/2$.
\end{proof}

\begin{theorem}\label{theorem6.3}
Let $X$ be a fuzzy Riesz space, $T: X\rightarrow X$ be an operator on $X$ and $I$ be the identity operator on $X$. Then the following statements are equivalent.
\begin{enumerate}
  \item [(i)]$T$ is a fuzzy band projection.
  \item [(ii)]$T$ is a fuzzy positive projection satisfying $T\preceq I$.
  \item [(iii)]$Tx\bot (I-T)y$ for all $x, y\in X$, that is, $T$ and $I-T$ have disjoint ranges.
\end{enumerate}
\end{theorem}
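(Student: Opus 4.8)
The plan is to prove the cycle $(i)\Rightarrow(ii)\Rightarrow(iii)\Rightarrow(i)$. For $(i)\Rightarrow(ii)$, suppose $T=P_B$ for a fuzzy projection band $B$, so $X=B\oplus B^d$ and every $x$ decomposes uniquely as $x=x_1+x_2$ with $x_1\in B$, $x_2\in B^d$ and $Tx=x_1$; that $T$ is a projection is exactly the remark preceding Definition \ref{definition6.3}, and fuzzy positivity is Theorem \ref{theorem6.2}(i). For $T\preceq I$ I would take $x\in X^+$ and apply the Riesz decomposition theorem to write $x=x_1+x_2$ with both parts positive, so that $\mu(0,x_2)>1/2$ and the compatibility of the order with addition give $\mu(Tx,x)=\mu(x_1,x_1+x_2)>1/2$.

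For $(ii)\Rightarrow(iii)$, first note that $(I-T)^2=I-2T+T^2=I-T$, so $I-T$ is again a projection, and that $T\preceq I$ yields $\mu(0,(I-T)x)>1/2$ for every $x\in X^+$, so $I-T$ is fuzzy positive as well. I would then reduce to positive arguments: by Lemma \ref{lemma6.1}, $\mu(|Tx|,T|x|)>1/2$ and $\mu(|(I-T)y|,(I-T)|y|)>1/2$, so by the fuzzy solidness of disjoint complements (Theorem \ref{theorem4.4}(v)) it suffices to establish $Tu\bot(I-T)v$ for $u,v\in X^+$. Writing $p=Tu$ and $q=(I-T)v$, the projection identities give $Tp=p$, $(I-T)p=0$, $Tq=0$ and $(I-T)q=q$. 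Setting $w=p\wedge q$, we have $\mu(w,p)>1/2$ and $\mu(w,q)>1/2$; monotonicity of the fuzzy positive operator $I-T$ gives $\mu((I-T)w,(I-T)p)>1/2$, and since $(I-T)p=0$ while $\mu(0,(I-T)w)>1/2$, antisymmetry forces $(I-T)w=0$. Symmetrically $Tw=0$. Hence $w=Tw+(I-T)w=0$, i.e.\ $Tu\bot(I-T)v$.

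For $(iii)\Rightarrow(i)$, I would first extract $T^2=T$. Put $B=\mathrm{Range}(T)$ and $C=\mathrm{Range}(I-T)$; these are subspaces by linearity, and (iii) says $B\bot C$. Any common element of $B$ and $C$ is disjoint from itself, hence zero by idempotency of $\wedge$, so $B\cap C=\{0\}$; since $Tx-T^2x=(I-T)(Tx)$ lies in both $B$ and $C$, it vanishes, giving $T^2=T$ and the direct sum $X=B\oplus C$. From $B\bot C$ we get $C\subseteq B^d$ and $B\subseteq C^d$. To obtain $B=C^d$, take $z\in C^d$ and decompose $z=Tz+(I-T)z=b+c$; then $b\in B\subseteq C^d$ forces $c=z-b\in C^d$, while $c\in C\subseteq C^{dd}$, so Theorem \ref{theorem4.4}(i),(iii) give $c\in C^{dd}\cap C^d=\{0\}$, whence $z=b\in B$. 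The symmetric argument applied to the projection $I-T$ gives $C=B^d$. Thus $B=C^d$ is a fuzzy band by Theorem \ref{theorem5.3}, $X=B\oplus B^d$, and the uniqueness of the decomposition identifies $T$ with $P_B$, so $T$ is a fuzzy band projection.

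The step I expect to be hardest is $(ii)\Rightarrow(iii)$: converting the operator-order statement $T\preceq I$ into usable monotonicity, and in particular discharging each inequality $\mu(\cdot,\cdot)>1/2$ in the computation $w=p\wedge q=0$ (monotonicity of the positive operators and of $\wedge$, together with antisymmetry to pass from two-sided domination to equality). The reduction from arbitrary $x,y$ to positive $u,v$ via Lemma \ref{lemma6.1} and solidness also requires care, since disjointness is only inherited downward along the fuzzy order.
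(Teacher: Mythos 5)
Your proof is correct. On two of the three implications it matches the paper: $(i)\Rightarrow(ii)$ is dismissed there as trivial (your verification via Theorem \ref{theorem6.2}(i) and the positive decomposition is exactly what is meant), and for $(ii)\Rightarrow(iii)$ the paper runs the same computation you do --- for positive arguments set $w=Tx\wedge(I-T)y$, use positivity and monotonicity of $T$ and $I-T$ together with $T(I-T)=(I-T)T=0$ and antisymmetry to get $Tw=(I-T)w=0$, hence $w=Tw+(I-T)w=0$, then pass to general $x,y$ via Lemma \ref{lemma6.1}; your explicit appeal to the solidness of disjoint complements (Theorem \ref{theorem4.4}(v)) just fills in a step the paper leaves implicit. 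The genuine difference is in $(iii)\Rightarrow(i)$. The paper never works with the ranges themselves: it takes $B_1,B_2$ to be the fuzzy ideals \emph{generated by} the ranges of $T$ and $I-T$, observes $B_1\bot B_2$ and $X=B_1\oplus B_2$, invokes Theorem \ref{theorem5.4} to conclude that $B_1,B_2$ are complementary projection bands, and identifies $T=P_{B_1}$ from $P_{B_1}T=T$ and $P_{B_1}(I-T)=0$; idempotency of $T$ is never needed there, only obtained a posteriori. You instead keep $B=\mathrm{Range}(T)$ and $C=\mathrm{Range}(I-T)$, derive $T^2=T$ from disjointness, and prove directly that $B=C^d$ and $C=B^d$ using Theorem \ref{theorem4.4}(i),(iii) and the decomposition $z=Tz+(I-T)z$, so that $B$ is a band by Theorem \ref{theorem5.3}. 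Your route bypasses the ideal-generation step and Theorem \ref{theorem5.4}, and it proves slightly more (the range of $T$ is itself a band, not merely contained in one); the paper's route is shorter because Theorem \ref{theorem5.4} absorbs exactly the disjoint-complement bookkeeping you carry out by hand. One small remark common to both proofs: $T\preceq I$ is only ever verified and used on $X^+$; under the paper's literal definition of $\preceq$ (comparison at every $x\in X$) a band projection would not satisfy $T\preceq I$, so the positive-cone reading you adopt is the intended one.
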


\begin{proof}
\begin{enumerate}
  \item []$(i) \Longrightarrow (ii)$ This is trivial.
  \item []$(ii) \Longrightarrow (iii)$ Let $x, y\in X^+$. It follows from $\mu(0, Tx\wedge (I-T)y)>1/2$,
         $\nu(Tx\wedge (I-T)y, (I-T)y)>1/2$ and the fuzzy positivity of $T$ and $I-T$ that $\mu(0, T(Tx\wedge (I-T)y))>1/2$ and $\nu(T(Tx\wedge (I-T)y), 0)=\nu(T(Tx\wedge (I-T)y), T((I-T)y))>1/2$.
         Therefore, the antisymmetry of $\nu$ implies $T(Tx\wedge (I-T)y)=0$. Similarly, $(I-T)(Tx\wedge (I-T)y)=0$. Hence, $Tx\wedge (I-T)y=T(Tx\wedge (I-T)y)+(I-T)(Tx\wedge (I-T)y)=0$.
         In view of Lemma \ref{lemma6.1}, we conclude that  $Tx\bot (I-T)y$ for all $x, y\in X$.
  \item []$(iii) \Longrightarrow (i)$ Suppose $B_1$ and $B_2$ are the fuzzy ideals generated by the ranges of $T$ and $I-T$, respectively. Then $B_1\bot B_2$. For every $x\in X$, we have
            $x=Tx+(I-T)x$. Therefore, $X=B_1\oplus B_2$. It follows from Theorem \ref{theorem5.4} that $B_1$ and $B_2$ are fuzzy projection bands on $X$. Thus, we have
                \begin{equation*}
                P_{B_1}(x)-T(x)=P_{B_1}(x)-P_{B_1}T(x)=P_{B_1}(I-T)(x)=0,
                \end{equation*}
                showing that $T=P_{B_1}$. Hence, $T$ is a fuzzy band projection.
\end{enumerate}
\end{proof}

\begin{corollary}\label{corollary6.1}
If $X$ is a fuzzy Dedekind $\sigma$-complete Riesz space, then $X$ has the fuzzy principal projection property. \end{corollary}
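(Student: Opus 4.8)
The plan is to combine the characterization of fuzzy projection vectors from Theorem \ref{theorem6.4} with the defining completeness property of fuzzy Dedekind $\sigma$-complete spaces. I would fix an arbitrary element $x\in X$ and show that it is a fuzzy projection vector; since $x$ is arbitrary, this establishes that $X$ has the fuzzy principal projection property.

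By Theorem \ref{theorem6.4}, it is enough to verify that for every positive element $y\in X^+$ the supremum of $E_y=\{y\wedge n|x|\}_{n\in N}$ exists in $X$. Fixing such a $y$, I would observe that $E_y$ is a nonempty countable subset of $X$ and that it is bounded above: for each $n$ the element $y\wedge n|x|$ is the infimum of $y$ and $n|x|$, hence $\mu(y\wedge n|x|, y)>1/2$, so that $y$ is an upper bound of every member of $E_y$ and thus $y\in U(E_y)$.

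With $E_y$ exhibited as a nonempty countable set that is bounded above, the hypothesis that $X$ is fuzzy Dedekind $\sigma$-complete applies verbatim (Definition \ref{definition5.5} (ii)) to guarantee that $\sup E_y$ exists in $X$. As $y\in X^+$ was arbitrary, Theorem \ref{theorem6.4} then yields that $x$ is a fuzzy projection vector, and the arbitrariness of $x$ completes the argument.

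I do not anticipate a genuine obstacle: the corollary is essentially a dictionary translation between the two cited results. The only point worth stating explicitly is that no increasing or positivity hypotheses on $E_y$ are required for this route — the bare facts that $E_y$ is countable and bounded above (the latter being immediate from $\wedge$ being a greatest lower bound) are precisely what the definition of fuzzy Dedekind $\sigma$-completeness consumes. (If one prefers, the same conclusion can be reached through Theorem \ref{theorem5.6} (ii) after noting that $E_y$ is an increasing sequence in $X^+$, but this refinement is unnecessary here.)
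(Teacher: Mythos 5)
Your proof is correct and matches the paper's intended route: the paper states this corollary without proof, as an immediate consequence of Theorem~\ref{theorem6.4} together with fuzzy Dedekind $\sigma$-completeness (Definition~\ref{definition5.5}(ii)), which is exactly the argument you give. Your side remark is also accurate --- since Definition~\ref{definition5.5}(ii) applies to arbitrary nonempty countable sets bounded above, the increasing structure of $E_y$ (and the alternative via Theorem~\ref{theorem5.6}(ii)) is not needed.
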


\begin{theorem}\label{theorem6.5}
If $B_1$ and $B_2$ are two fuzzy projection bands of a fuzzy Riesz space $X$, then $B_1^d, B_1\cap B_2$ and $B_1+B_2$ are also fuzzy projection bands satisfying the following identities.
\begin{enumerate}
  \item [(i)]$P_{B_1^d}=I-P_{B_1}$.
  \item [(ii)]$P_{B_1\cap B_2}=P_{B_1}P_{B_2}=P_{B_2}P_{B_1}$.
  \item [(iii)]$P_{B_1+B_2}=P_{B_1}+P_{B_2}-P_{B_1}P_{B_2}=P_{B_1}+P_{B_2}-P_{B_1\cap B_2}$.
\end{enumerate}
\end{theorem}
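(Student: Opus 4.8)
The plan is to establish the three claims in order, each feeding the next, and to lean throughout on the supremum formula for band projections and on the characterization of projection bands. For (i), since $B_1$ is a projection band we have $X=B_1\oplus B_1^d$, and Theorem~\ref{theorem5.4} gives $B_1^{dd}=B_1$, so $(B_1^d)^d=B_1$ and hence $X=B_1^d\oplus(B_1^d)^d$; this exhibits $B_1^d$ as a projection band. For the identity I would simply note that every $x$ decomposes as $x=x_1+x_2$ with $x_1\in B_1$ and $x_2\in B_1^d$, so that $P_{B_1}(x)=x_1$ and $P_{B_1^d}(x)=x_2=x-P_{B_1}(x)$, i.e.\ $P_{B_1^d}=I-P_{B_1}$.

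Part (ii) is the heart of the argument. Writing $P_i=P_{B_i}$, I would first fix $x\in X^+$ and invoke Theorem~\ref{theorem6.2}(ii), which gives $P_2(x)=\sup(B_2\cap[0,x])$ and $P_1P_2(x)=\sup(B_1\cap[0,P_2(x)])$; note $P_1P_2(x)\in B_1^+$ by fuzzy positivity. From $\mu(P_1P_2(x),P_2(x))>1/2$ with $P_2(x)\in B_2$ and the fuzzy solidness of $B_2$ (both elements being positive) I would conclude $P_1P_2(x)\in B_1\cap B_2$, and combining $\mu(P_1P_2(x),P_2(x))>1/2$ with $\mu(P_2(x),x)>1/2$ gives $\mu(P_1P_2(x),x)>1/2$, so this element lies in $(B_1\cap B_2)^+\cap[0,x]$. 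A routine check that any $y\in(B_1\cap B_2)^+$ with $\mu(y,x)>1/2$ satisfies $\mu(y,P_2(x))>1/2$ and hence $\mu(y,P_1P_2(x))>1/2$ then shows $P_1P_2(x)=\sup\big((B_1\cap B_2)^+\cap[0,x]\big)$. By Theorem~\ref{theorem6.1}(ii) this simultaneously proves that $B_1\cap B_2$ is a fuzzy projection band and that $P_{B_1\cap B_2}(x)=P_1P_2(x)$ for every $x\in X^+$. Swapping the roles of $B_1$ and $B_2$ gives $P_{B_1\cap B_2}(x)=P_2P_1(x)$ on $X^+$; since every element of $X$ is a difference $x^+-x^-$ of positive elements and all three operators are linear, the equalities $P_1P_2=P_2P_1=P_{B_1\cap B_2}$ extend to all of $X$.

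For (iii), set $S=P_1+P_2-P_1P_2$. Using the commutativity $Q:=P_1P_2=P_2P_1$ from (ii) together with $P_iQ=QP_i=Q$ and $Q^2=Q$, a short computation yields $S^2=S$, so $S$ is a projection; the second displayed equality in (iii) is then immediate from (ii). Next I would check that $S$ fixes $B_1+B_2$ (for $b_1\in B_1$, the relations $P_1b_1=b_1$ and $P_1P_2b_1=P_2b_1$ force $Sb_1=b_1$, and symmetrically for $B_2$) while $S(x)\in B_1+B_2$ for every $x$, so that $\operatorname{ran}S=B_1+B_2$. The decisive step is to recognize $I-S=(I-P_1)(I-P_2)=P_{B_1^d}P_{B_2^d}$, which by (i) and (ii) equals $P_{B_1^d\cap B_2^d}$; since $B_1^d\cap B_2^d=(B_1+B_2)^d$ (using Theorem~\ref{theorem2.3.5}(i) for the nontrivial inclusion), $I-S$ is the band projection onto $(B_1+B_2)^d$. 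Consequently $\operatorname{ran}S=\ker(I-S)=(B_1+B_2)^{dd}$, and comparing with $\operatorname{ran}S=B_1+B_2$ gives $B_1+B_2=(B_1+B_2)^{dd}$; thus $B_1+B_2$ is a projection band and $P_{B_1+B_2}=S$.

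The main obstacle I anticipate is part (ii): verifying in the fuzzy order that $P_1P_2(x)$ is genuinely the supremum of $(B_1\cap B_2)^+\cap[0,x]$, where the classical inequalities must be rendered throughout as statements $\mu(\,\cdot\,,\,\cdot\,)>1/2$ and where it is fuzzy solidness, rather than an order inequality, that returns $P_1P_2(x)$ to $B_2$. Once commutativity is secured there, part (iii) is essentially the algebra of commuting idempotents combined with the $dd$-identity, and part (i) is immediate.
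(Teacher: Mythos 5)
Your proofs of (i) and (ii) take essentially the paper's route: (i) is the paper's argument almost verbatim (Theorem \ref{theorem5.4} plus uniqueness of the decomposition $x=x_1+x_2$), and in (ii), like the paper, you identify $P_{B_1}P_{B_2}(x)=\sup\bigl((B_1\cap B_2)\cap[0,x]\bigr)$ for $x\in X^+$ via Theorem \ref{theorem6.2} and then invoke Theorem \ref{theorem6.1}, getting commutativity by symmetry; you merely spell out in detail the supremum identity that the paper asserts in one line as $B_1\cap[0,P_{B_2}(x)]=B_1\cap B_2\cap[0,x]$. In (iii), however, you genuinely diverge, and your argument is correct. The paper proves (iii) in two steps: first the disjoint case $B_1\bot B_2$, by an order computation showing $\sup\bigl((B_1+B_2)\cap[0,x]\bigr)=P_{B_1}x+P_{B_2}x$ and citing Theorems \ref{theorem6.1} and \ref{theorem6.2} again; then the general case by writing $B_1+B_2=(B_1\cap B_2^d)\oplus B_2$ and computing with (i)--(ii). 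You avoid both steps: setting $S=P_{B_1}+P_{B_2}-P_{B_1}P_{B_2}$, you verify $S^2=S$ and $\operatorname{ran}S=B_1+B_2$ from commutativity, then identify $I-S=(I-P_{B_1})(I-P_{B_2})=P_{B_1^d}P_{B_2^d}=P_{B_1^d\cap B_2^d}=P_{(B_1+B_2)^d}$ (using (i), then (ii) applied to the projection bands $B_1^d,B_2^d$, then Theorem \ref{theorem2.3.5}(i) for the identity $B_1^d\cap B_2^d=(B_1+B_2)^d$), and conclude $B_1+B_2=\operatorname{ran}S=\ker(I-S)=(B_1+B_2)^{dd}$, whence $X=(B_1+B_2)\oplus(B_1+B_2)^d$ and $P_{B_1+B_2}=S$. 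What each approach buys: yours is pure algebra of commuting idempotents once (i)--(ii) are in hand, requires no further supremum computation, and yields the nontrivial identity $B_1+B_2=(B_1+B_2)^{dd}$ as a by-product; the paper's route produces the independently useful formula $P_{B_1\oplus B_2}=P_{B_1}+P_{B_2}$ for disjoint projection bands, which is exactly what its reduction then exploits. One step you should make explicit: Definition \ref{definition6.1} requires $B_1+B_2$ to be a fuzzy \emph{band}, not merely an ideal admitting a disjoint decomposition of $X$; this is immediate either from Theorem \ref{theorem5.3} (since $B_1+B_2=(B_1+B_2)^{dd}$ is a disjoint complement) or from Theorem \ref{theorem5.4} applied to the ideals $B_1+B_2$ (an ideal by Theorem \ref{theorem4.3}(i)) and $(B_1+B_2)^d$.
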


\begin{proof}
\begin{enumerate}
  \item [(i)]Since $B_1$ is a fuzzy projection band, we have $X=B_1\oplus B_1^d$. By Theorem \ref{theorem5.4}, we know $X=B_1^d\oplus B_1^{dd}$, that is,
            $B_1^d$ is a fuzzy projection band. It is obvious that $P_{B_1}+P_{B_1^d}=I$, that is, $P_{B_1^d}=I-P_{B_1}$.
  \item [(ii)]Let $x\in X^+$. Apply Theorem \ref{theorem6.2} to $P_{B_2}$ to obtain $B_1\cap [0, P_{B_2}(x)]=B_1\cap B_2\cap [0, x]$. Then apply Theorem \ref{theorem6.2}
                to $P_{B_1}$ to get
             \begin{equation*}
             P_{B_1}P_{B_2}x=\sup (B_1\cap [0, P_{B_1}(x)])=\sup (B_1\cap B_2\cap [0, x]).
             \end{equation*}
             It follows from Theorem \ref{theorem6.1} that $B_1\cap B_2$ is a fuzzy projection band and $P_{B_1\cap B_2}=P_{B_1} P_{B_2}$. By symmetry, we also have
             $P_{B_1\cap B_2}=P_{B_2} P_{B_1}$.
  \item [(iii)]First, we assume that $B_1\bot B_2$. Let $x\in X^+$ and take $x_1+x_2\in (B_1+B_2)\cap [0, x]$. Then $x_1\in B_1\cap [0, x]$, $x_2\in B_2\cap [0, x]$,
               $x_1+x_2\in (B_1+B_2)^+$, and $\mu(x_1+x_2, x)>1/2$. Thus, Theorem \ref{theorem6.2} implies $\mu(x_1, P_{B_1}(x))>1/2$ and $\mu(x_2, P_{B_2}(x))>1/2$, which further implies $\mu(x_1+x_2, P_{B_1}(x)+P_{B_2}(x))>1/2$. Thus, $P_{B_1}(x)+P_{B_2}(x)\in U((B_1+B_2)\cap [0, x])$. As $P_{B_1}(x)+P_{B_2}(x)\in B_1+B_2$, we have
               \begin{equation}\label{6.2}
               \sup \left((B_1+B_2)\cap [0, x]\right)=P_{B_1}x+P_{B_2}x.
               \end{equation}
               By Theorem \ref{theorem6.1}, we know $B_1+B_2$ is a fuzzy projection band. Then it follows from Equation (\ref{6.2}) and Theorem \ref{theorem6.2} that $P_{B_1+B_2}=P_{B_1}+P_{B_2}$. For the general case, we notice that $B_1+B_2=(B_1\cap B_2^d)\oplus B_2$. Thus, by (i), (ii) and the above special case, we have
               \begin{eqnarray*}
               P_{B_1+B_2} &= & P_{(B_1\cap B_2^d)\oplus B_2}=P_{B_1\cap B_2^d}+P_{B_2}\\
                           &=& P_{B_1}P_{B_2^d}+P_{B_2}\\
                           &=& P_{B_1}(I-P_{B_2})+P_{B_2}\\
                           &=& P_{B_1}+B_{B_2}-P_{B_1}P_{B_2}=P_{B_1}+B_{B_2}-P_{B_1\cap B_2}.
               \end{eqnarray*}
\end{enumerate}
\end{proof}

\begin{corollary}\label{corollary6.2}
If $x$ and $y$ are two fuzzy projection vectors in a fuzzy Riesz space $X$, then $B_x^d, B_x\cap B_y$ and $B_x+B_y$ are also fuzzy projection bands satisfying the following identities.
\begin{enumerate}
  \item [(i)]$P_{B_x^d}=I-P_{x}$.
  \item [(ii)]$P_{x\wedge y}=P_{x}P_{y}=P_{y}P_{x}$.
  \item [(iii)]$P_{x+y}=P_{x}+P_{y}-P_{x}P_{y}=P_{x}+P_{y}-P_{x\wedge y}$.
\end{enumerate}
\end{corollary}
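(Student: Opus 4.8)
The plan is to obtain this as the principal--vector specialization of Theorem \ref{theorem6.5}. Since $x$ and $y$ are fuzzy projection vectors, Definition \ref{definition6.2} says that the principal bands $B_x$ and $B_y$ are themselves fuzzy projection bands, so I would apply Theorem \ref{theorem6.5} with $B_1=B_x$ and $B_2=B_y$. This at once gives that $B_x^{d}$, $B_x\cap B_y$ and $B_x+B_y$ are fuzzy projection bands, together with $P_{B_x^{d}}=I-P_{B_x}$, $P_{B_x\cap B_y}=P_{B_x}P_{B_y}=P_{B_y}P_{B_x}$ and $P_{B_x+B_y}=P_{B_x}+P_{B_y}-P_{B_x}P_{B_y}$. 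Recalling the convention $P_x=P_{B_x}$ from Definition \ref{definition6.3}, assertion (i) is immediate.

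The real content is to pass from the subscripts $B_x\cap B_y$ and $B_x+B_y$ to $x\wedge y$ and $x+y$; that is, to establish the band identities $B_x\cap B_y=B_{x\wedge y}$ and $B_x+B_y=B_{x+y}$. Because a principal band depends only on the absolute value of its generator (the description in Corollary \ref{corollary5.1} involves only $|x|$), I would carry this out for positive generators, where $x\wedge y$ and $x+y$ are again positive and the subscripts are unambiguous. For the intersection, $\mu(x\wedge y,x)>1/2$ and $\mu(x\wedge y,y)>1/2$ together with fuzzy solidness place $x\wedge y$ in both $B_x$ and $B_y$, so $B_{x\wedge y}\subseteq B_x\cap B_y$; conversely, for $z\in B_x\cap B_y$ one uses $n(x\wedge y)=(nx)\wedge(ny)$ (positive homogeneity, Theorem \ref{theorem2.3.2}) and checks via Corollary \ref{corollary5.1} that $|z|\wedge n(x\wedge y)\uparrow|z|$, the key point being that $|z|\wedge(nx)\uparrow|z|$ and $|z|\wedge(ny)\uparrow|z|$ force $(|z|\wedge nx)\wedge(|z|\wedge ny)\uparrow|z|$. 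For the sum, $B_x+B_y$ is already a band by Theorem \ref{theorem6.5} and contains $x+y$, whence $B_{x+y}\subseteq B_x+B_y$; the reverse inclusion follows because $\mu(x,x+y)>1/2$ and $\mu(y,x+y)>1/2$ with fuzzy solidness give $x,y\in B_{x+y}$, hence $B_x,B_y\subseteq B_{x+y}$ and therefore $B_x+B_y\subseteq B_{x+y}$.

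With both identities in hand, substituting $P_x=P_{B_x}$, $P_{x\wedge y}=P_{B_x\cap B_y}$ and $P_{x+y}=P_{B_x+B_y}$ into the conclusions of Theorem \ref{theorem6.5} yields (ii) and (iii), the second equality in (iii) coming from $P_{B_x}P_{B_y}=P_{B_x\cap B_y}=P_{x\wedge y}$. I expect the main obstacle to be the reduction to positive generators: for general $x,y$ the element $x\wedge y$ need not generate $B_x\cap B_y$ (its absolute value can have larger support than $|x|\wedge|y|$, while $B_x\cap B_y=B_{|x|\wedge|y|}$), so the identities---and hence the very notation $P_{x\wedge y}$, $P_{x+y}$---must be read through $|x|,|y|$ or restricted to $X^{+}$. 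Making this interpretive point precise, rather than the band computations themselves, is the delicate step.
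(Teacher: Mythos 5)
Your proposal is correct and takes the same route as the paper, which states this corollary without proof as the immediate specialization of Theorem \ref{theorem6.5} to the principal bands $B_x$ and $B_y$ (legitimate since, by Definition \ref{definition6.2}, these are fuzzy projection bands). The band identities $B_x\cap B_y=B_{x\wedge y}$ and $B_x+B_y=B_{x+y}$ that you verify --- together with your caveat that they require positive generators, or reading $x\wedge y$ and $x+y$ through $|x|$ and $|y|$ --- are precisely the details the paper leaves implicit in writing $P_{x\wedge y}$ and $P_{x+y}$, so your write-up supplies a justification the paper omits rather than a different argument.
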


\begin{theorem}\label{theorem6.6}
Let $B_1$ and $B_2$ be two fuzzy projection bands on a fuzzy Riesz space $X$. Then the following statements are equivalent.
\begin{enumerate}
  \item [(i)]$B_1\subset B_2$.
  \item [(ii)]$P_{B_1}P_{B_2}=P_{B_2}P_{B_1}=P_{B_1}$.
  \item [(iii)]$P_{B_1}\preceq P_{B_2}$.
\end{enumerate}
\end{theorem}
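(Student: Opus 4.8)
The plan is to prove the cycle $(i)\Rightarrow(ii)\Rightarrow(iii)\Rightarrow(i)$, drawing on the projection calculus of Theorem \ref{theorem6.5}, the fuzzy positivity of band projections recorded in Theorem \ref{theorem6.2}, and the comparison $T\preceq I$ furnished by Theorem \ref{theorem6.3}. Two auxiliary facts will be used repeatedly. First, a fuzzy band projection $P_B$ fixes exactly the elements of its band, so $P_B(x)=x$ whenever $x\in B$ (decompose $x=x+0$ with $x\in B$ and $0\in B^d$). Second, every fuzzy positive operator is order-preserving: if $\mu(a,b)>1/2$ then the translation compatibility in Definition \ref{definition2.3.1} gives $\mu(0,b-a)>1/2$, and fuzzy positivity then yields $\mu(0,P_B(b-a))>1/2$, i.e. $\mu(P_Ba,P_Bb)>1/2$.

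For $(i)\Rightarrow(ii)$ I would simply note that $B_1\subset B_2$ forces $B_1\cap B_2=B_1$, so the multiplicative identity $P_{B_1\cap B_2}=P_{B_1}P_{B_2}=P_{B_2}P_{B_1}$ of Theorem \ref{theorem6.5}(ii) collapses to $P_{B_1}=P_{B_1}P_{B_2}=P_{B_2}P_{B_1}$. For $(ii)\Rightarrow(iii)$ I would fix $x\in X$ and start from $\mu(P_{B_1}(x),x)>1/2$, which holds because $P_{B_1}$ is a band projection and hence satisfies $P_{B_1}\preceq I$ by Theorem \ref{theorem6.3}. Applying the order-preserving operator $P_{B_2}$ gives $\mu(P_{B_2}P_{B_1}(x),P_{B_2}(x))>1/2$, and the hypothesis $P_{B_2}P_{B_1}=P_{B_1}$ rewrites the left entry as $P_{B_1}(x)$. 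Thus $\mu(P_{B_1}(x),P_{B_2}(x))>1/2$ for every $x\in X$, which is exactly $P_{B_1}\preceq P_{B_2}$.

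For $(iii)\Rightarrow(i)$ I would take an arbitrary $x\in B_1$. Since $x\in B_1$ we have $P_{B_1}(x)=x$, so the hypothesis $P_{B_1}\preceq P_{B_2}$ reads $\mu(x,P_{B_2}(x))>1/2$. On the other hand $P_{B_2}\preceq I$ from Theorem \ref{theorem6.3} gives $\mu(P_{B_2}(x),x)>1/2$. Adding these yields $\mu(x,P_{B_2}(x))+\mu(P_{B_2}(x),x)>1$, so the antisymmetry axiom of the fuzzy order forces $x=P_{B_2}(x)\in B_2$; as $x\in B_1$ was arbitrary, $B_1\subset B_2$.

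The main obstacle is conceptual rather than computational: everything hinges on reading the operator order $\preceq$ correctly and on having $P_{B_1}\preceq I$ and $P_{B_2}\preceq I$ available as genuine global comparisons, which is precisely the content extracted from Theorem \ref{theorem6.3}. Once order-preservation of fuzzy positive projections is in hand, the two nontrivial implications reduce to a single use of monotonicity and a single appeal to antisymmetry, respectively; the remaining care is only to invoke the fixed-point identity $P_B(x)=x$ on $B$ and the commuting relations of Theorem \ref{theorem6.5} with the correct bands.
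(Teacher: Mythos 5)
Your cycle $(i)\Rightarrow(ii)\Rightarrow(iii)\Rightarrow(i)$ matches the paper's, and your $(i)\Rightarrow(ii)$ is word-for-word the paper's argument ($B_1\cap B_2=B_1$ plus Theorem \ref{theorem6.5}(ii)). The genuine problem is your reading of the operator order: you treat ``$P_{B_1}\preceq I$'', ``$P_{B_2}\preceq I$'' and hypothesis (iii) as pointwise inequalities valid at \emph{every} $x\in X$, and both your $(ii)\Rightarrow(iii)$ step and your $(iii)\Rightarrow(i)$ antisymmetry step apply them at elements that need not be positive. That inequality is simply not available at non-positive elements: take the crisp space $R^2$ fuzzified as in the paper's examples ($\mu(x,y)=1$ if $x=y$, $2/3$ if $x\le y$ coordinatewise and $x\neq y$, $0$ otherwise), let $B$ be the first coordinate axis, so $P_B(x_1,x_2)=(x_1,0)$; for $x=(0,-1)$ one gets $\mu(P_Bx,x)=\mu((0,0),(0,-1))=0$. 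The same example with $B_1=B$ and $B_2=R^2$ satisfies (i) and (ii) while the all-$x$ version of (iii) fails, so under your reading the implication $(ii)\Rightarrow(iii)$ you set out to prove is actually false. The paper is partly to blame, since its definition of $\preceq$ literally says ``for all $x\in X$''; but its own proofs (of Theorem \ref{theorem6.3}, where $T\preceq I$ is only ever used through the fuzzy positivity of $I-T$, and of the present theorem, which tests everything at positive elements) show that the operative meaning is ``$T-S$ is fuzzy positive'', i.e.\ the comparison is guaranteed only on $X^+$, and that is the only reading under which Theorem \ref{theorem6.3} and this theorem are true.

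The repair is local. In $(ii)\Rightarrow(iii)$ restrict to $x\in X^+$: then $\mu(P_{B_1}x,x)>1/2$ is legitimate, your monotonicity-of-$P_{B_2}$ argument (which is correct and worth keeping) gives $\mu(P_{B_2}P_{B_1}x,P_{B_2}x)>1/2$, and (ii) rewrites this as $\mu(P_{B_1}x,P_{B_2}x)>1/2$; this is exactly the positive-element content of $P_{B_1}\preceq P_{B_2}$, and it is the mirror image of the paper's computation $P_{B_1}x=P_{B_1}P_{B_2}x\preceq P_{B_2}x$. In $(iii)\Rightarrow(i)$ run your antisymmetry argument only for positive $x\in B_1$ (both inequalities you add are then available), concluding $x=P_{B_2}x\in B_2$; then extend to arbitrary $x\in B_1$ by writing $x=x^+-x^-$ with $x^+,x^-\in B_1$ by solidness of $B_1$. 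With these restrictions your $(iii)\Rightarrow(i)$ becomes a nice alternative to the paper's, which instead infers $x\in B_2$ from $0\le x\le P_{B_2}x\in B_2$ via solidness of $B_2$ and likewise leaves the extension from $B_1^+$ to all of $B_1$ implicit.
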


\begin{proof}
\begin{enumerate}
  \item []$(i) \Longrightarrow (ii)$ If $B_1\subset B_2$, then Theorem \ref{theorem6.5} implies that
            \begin{equation*}
            P_{B_1}P_{B_2}=P_{B_2}P_{B_1}=P_{B_1\cap B_2}=P_{B_1}.
            \end{equation*}
  \item []$(ii) \Longrightarrow (iii)$ Suppose (ii) holds. Then for each positive element $x\in X$ Theorem \ref{theorem6.3} implies
             \begin{equation*}
             P_{B_1}(x)=P_{B_1}P_{B_2}(x)\preceq IP_{B_2}(x)=P_{B_2}(x),
             \end{equation*}
             showing that $P_{B_1}\preceq P_{B_2}$.
  \item []$(iii) \Longrightarrow (i)$ If (iii) holds, then for each positive element $x\in B_1$ we have
            \begin{equation*}
            x=P_{B_1}(x)\leq P_{B_2} (x)\in B_2,
            \end{equation*}
            implying that $B_1\subset B_2$.
\end{enumerate}
\end{proof}

\bibliographystyle{amsplain}

\end{document}